\newif\ifMariaHasTheToken
\newcommand{\MariaIsEditing}[1]{\ifMariaHasTheToken #1\fi}
\newtheorem{question}{Question}[section]
\newtheorem{lemma}[question]{Lemma}
\newtheorem{theorem}[question]{Theorem}
\newtheorem{conjecture}[question]{Conjecture}
\newtheorem{corollary}[question]{Corollary}
\newcounter{tbox}
\newcommand{\sta}[1]{\vspace*{0.3cm}\refstepcounter{tbox}\noindent{ \parbox{\textwidth}{(\thetbox) \emph{#1}}}\vspace*{0.3cm}}
\tikzstyle{box}=[shape=rectangle, text height=1.5ex, text depth=0.25ex, yshift=0.5mm, fill=white, draw=black, minimum height=5mm, yshift=-0.5mm, minimum width=5mm, font={\small}]
\tikzstyle{gate}=[shape=rectangle, text height=1.5ex, text depth=0.25ex, yshift=0.5mm, fill=white, draw=black, minimum height=5mm, yshift=-0.5mm, minimum width=5mm, font={\small}, tikzit category=circuit]
\tikzstyle{big gate}=[shape=rectangle, text height=1.5ex, text depth=0.25ex, yshift=0.5mm, fill=white, draw=black, minimum height=10mm, yshift=-0.5mm, minimum width=5mm, font={\small}, tikzit category=circuit]
\tikzstyle{Z dot}=[inner sep=0mm, minimum size=2mm, shape=circle, draw=black, fill={rgb,255: red,221; green,255; blue,221}, tikzit category=zx]
\tikzstyle{Z phase dot}=[minimum size=5mm, font={\footnotesize\boldmath}, shape=rectangle, rounded corners=2mm, inner sep=0.2mm, outer sep=-2mm, scale=0.8, tikzit shape=circle, draw=black, fill=white, tikzit draw=blue, tikzit category=zx]
\tikzstyle{X dot}=[Z dot, shape=circle, draw=black, fill={rgb,255: red,255; green,136; blue,136}, tikzit category=zx]
\tikzstyle{X phase dot}=[Z phase dot, tikzit shape=circle, tikzit draw=blue, fill={rgb,255: red,255; green,136; blue,136}, font={\footnotesize\boldmath}, tikzit category=zx]
\tikzstyle{hadamard}=[fill=yellow, draw=black, shape=rectangle, inner sep=0.6mm, minimum height=1.5mm, minimum width=1.5mm, tikzit category=zx]
\tikzstyle{paulibox}=[fill={rgb,255: red,221; green,221; blue,255}, draw=black, shape=rectangle, inner sep=0.6mm, minimum height=5mm, minimum width=5mm, font={\footnotesize}, text height=1.5ex, text depth=0.25ex, tikzit category=zx]
\tikzstyle{vertex}=[inner sep=0mm, minimum size=1mm, shape=circle, draw=black, fill=black, tikzit category=misc]
\tikzstyle{vertex set}=[inner sep=0mm, minimum size=2mm, shape=circle, draw=black, fill=white, font={\footnotesize\boldmath}, tikzit category=misc]
\tikzstyle{small black dot}=[fill=black, draw=black, shape=circle, inner sep=0pt, minimum width=1.2mm, tikzit category=circuit]
\tikzstyle{cnot ctrl}=[fill=black, draw=black, shape=circle, inner sep=0pt, minimum width=1.2mm, tikzit category=circuit]
\tikzstyle{cnot targ}=[fill=white, draw=white, shape=circle, tikzit category=circuit, label={center:$\oplus$}, inner sep=0pt, minimum width=2.1mm, tikzit fill={rgb,255: red,102; green,204; blue,255}, tikzit draw=black]
\tikzstyle{ket}=[fill=white, draw=black, shape=regular polygon, regular polygon sides=3, regular polygon rotate=-30, scale=0.7, inner sep=1pt, tikzit category=circuit, tikzit shape=rectangle, tikzit fill=green]
\tikzstyle{bra}=[fill=white, draw=black, shape=regular polygon, regular polygon sides=3, regular polygon rotate=30, scale=0.7, inner sep=1pt, tikzit category=circuit, tikzit shape=rectangle, tikzit fill=red]
\tikzstyle{scalar}=[shape=rectangle, text height=1.5ex, text depth=0.25ex, yshift=0.5mm, fill=white, draw=black, minimum height=5mm, yshift=-0.5mm, minimum width=5mm, font={\small}]
\tikzstyle{clabel}=[fill=white, draw=none, shape=rectangle, tikzit fill={rgb,255: red,56; green,255; blue,242}, font={\footnotesize}, inner sep=1pt, tikzit category=labels]
\tikzstyle{empty diagram}=[draw={gray!40!white}, dashed, shape=rectangle, minimum width=1cm, minimum height=1cm, tikzit category=misc]
\tikzstyle{white dot}=[Z dot]
\tikzstyle{gray dot}=[X dot]
\tikzstyle{white phase dot}=[Z phase dot]
\tikzstyle{gray phase dot}=[X phase dot]
\tikzstyle{small hadamard}=[hadamard]
\tikzstyle{simple}=[-]
\tikzstyle{hadamard edge}=[-, dashed, dash pattern=on 2pt off 0.5pt, thick, draw={rgb,255: red,68; green,136; blue,255}]
\tikzstyle{non edge}=[-, dashed, dash pattern=on 2pt off 1pt, draw=black]
\tikzstyle{box edge}=[-, dashed, dash pattern=on 2pt off 0.5pt, thick, draw={rgb,255: red,203; green,192; blue,225}]
\tikzstyle{brace edge}=[-, tikzit draw=blue, decorate, decoration={brace,amplitude=1mm,raise=-1mm}]
\tikzstyle{diredge}=[->]
\tikzstyle{double edge}=[-, double, shorten <=-1mm, shorten >=-1mm, double distance=2pt]
\tikzstyle{gray edge}=[-, {gray!60!white}]
\tikzstyle{pointer edge}=[->, very thick, gray]
\tikzstyle{boldedge}=[-, line width=1.6pt, shorten <=-0.17mm, shorten >=-0.17mm]
\newcommand{\leqnomode}{\tagsleft@true}
\newcommand{\reqnomode}{\tagsleft@false}
\def\dd{\hbox{-}}
\DeclareMathOperator{\ta}{tree-\alpha}
\DeclareMathOperator{\atw}{tree-\alpha}
\newcommand{\set}[1]{\left\{#1\right\}}
\newcommand{\sm}{\setminus}
\newcommand{\mt}{\emptyset}
\newcommand{\nat}{{\mathbb N}}
\newcommand{\Mca}{\mathcal{M}}
\newcommand{\Ic}{\mathcal{I}}
\newcommand{\tw}{\mathsf{tw}}
\newcommand{\class}{\mathcal{H}_t}
\newcommand{\inc}{\text{inc}}
\theoremstyle{definition}
\tikzset{every picture/.style={line width=0.75pt}} 
\title{Tree independence number VI. Thetas and pyramids.}
\author{Maria Chudnovsky$^{\dagger}$}
\author{Julien Codsi $^{\mathsection}$}
\thanks{$^{\dagger}$ Princeton University, Princeton, NJ, USA. Supported by NSF Grant
 DMS-2348219, NSF Grant CCF-2505100, AFOSR grant FA9550-22-1-0083 and a Guggenheim Fellowship.}
\thanks{$^{\mathsection}$ Princeton University, Princeton, NJ, USA. Supported by NSF Grant DMS-2348219, by AFOSR Grant FA9550-22-1-0083, and FRQNT Grant 321124.}
\date{}
\begin{document}

\begin{abstract}
Given a family $\mathcal{H}$ of graphs, we say that a graph $G$ is $\mathcal{H}$-free if no induced subgraph of $G$ is isomorphic to a member of $\mathcal{H}$. Let $W_{t\times t}$ be the $t$-by-$t$ hexagonal grid and let $\mathcal{L}_t$ be the family of all graphs $G$ such that $G$ is the line graph of some subdivision of $W_{t \times t}$. We denote by $\omega(G)$ the size of the largest clique in $G$. We prove that for every integer $t$ there exist  integers $c_1(t)$, 
$c_2(t)$ and $d(t)$ such that 
every (pyramid, theta, $\mathcal{L}_t$)-free graph $G$ satisfies:
\begin{itemize}
\item $G$ has a tree decomposition where every bag has size at most
$\omega(G)^{c_1(t)} \log (|V(G)|)$.
\item If $G$ has at least two vertices, then $G$ has a tree decomposition where every bag has independence number at most $\log^{c_2(t)} (|V(G)|)$.
\item For any weight function, $G$ has a balanced separator that is contained in the union of the neighborhoods of at most $d(t)$ vertices.
\end{itemize}
These results qualitatively generalize the main theorems of \cite{tw3} and \cite{TI2}.

Additionally we show that there exist integers $c_3(t), c_4(t)$ such that
for every (theta, pyramid)-free graph $G$ and for every non-adjacent pair
of vertices $a,b \in V(G)$,
\begin{itemize}
\item $a$ can be separated from $b$ by removing at most $w(G)^{c_3(t)}\log(|V(G)|)$ vertices.
\item $a$ can be separated from $b$ by removing a set of vertices with independence number at most $\log^{c_4(t)}(|V(G)|)$.
\end{itemize}
\end{abstract}

\maketitle
\MariaIsEditing{\textcolor{red}{DO NOT MODIFY! Maria has the token!}}
\section{Introduction} \label{sec:intro}
All graphs in this paper are finite and simple, and all logarithms are base $2$.
Let $G = (V(G),E(G))$ be a graph. For a set $X \subseteq V(G),$ we denote by $G[X]$ the subgraph of $G$ induced by $X$, and by $G \setminus X$ the subgraph of $G$ induced by $V(G) \setminus X$. In this paper, we use induced subgraphs and their vertex sets interchangeably.
For graphs $G,H$ we say that $G$ {\em contains $H$} if $H$ is
isomorphic to $G[X]$ for some $X \subseteq V(G)$. In this case, we say that
$X$ {\em is an $H$ in $G$}.  We say that
$G$ is {\em $H$-free} if $G$ does not contain $H$.
For a family $\mathcal{H}$ of graphs, we say that $G$ is
{\em $\mathcal{H}$-free} if $G$ is $H$-free for every $H \in \mathcal{H}$.

The \emph{open neighborhood of $v$}, denoted by $N_G(v)$, is the set of all vertices in $V(G)$ adjacent to $v$. The \emph{closed neighborhood of $v$}, denoted by $N_G[v]$, is $N(v) \cup \{v\}$. Let $X \subseteq V(G)$. The \emph{open neighborhood of $X$}, denoted by $N_G(X)$, is the set of all vertices in $V(G) \setminus X$ with at least one neighbor in $X$. The \emph{closed neighborhood of $X$}, denoted by $N_G[X]$, is $N_G(X) \cup X$. When there is no danger of confusion, we omit the subscript $G$.
Let $Y \subseteq V(G)$ be disjoint from $X$. We say $X$ is \textit{complete} to $Y$ if all possible edges with an end in $X$ and an end in $Y$ are present in $G$, and $X$ is \emph{anticomplete}
to $Y$ if there are no edges between $X$ and $Y$.

For a graph $G$, a \emph{tree decomposition} $(T, \chi)$ of $G$ consists of a tree $T$ and a map $\chi\colon V(T) \to 2^{V(G)}$ with the following properties:
\begin{enumerate}
\itemsep -.2em
    \item For every $v \in V(G)$, there exists $t \in V(T)$ such that $v \in \chi(t)$.

    \item For every $v_1v_2 \in E(G)$, there exists $t \in V(T)$ such that $v_1, v_2 \in \chi(t)$.

    \item For every $v \in V(G)$, the subgraph of $T$ induced by $\{t \in V(T) \mid v \in \chi(t)\}$ is connected.
\end{enumerate}

For each $t\in V(T)$, we refer to $\chi(t)$ as a \textit{bag of} $(T, \chi)$.  The \emph{width} of a tree decomposition $(T, \chi)$, denoted by $width(T, \chi)$, is $\max_{t \in V(T)} |\chi(t)|-1$. The \emph{treewidth} of $G$, denoted by $\tw(G)$, is the minimum width of a tree decomposition of $G$.  Graphs of bounded treewidth  are well-understood both  structurally
\cite{RS-GMXVI} and algorithmically \cite{Bodlaender1988DynamicTreewidth}.

A {\em clique} in a graph $G$ is a set of pairwise adjacent vertices;
the {\em clique number} of $G$, denoted by $\omega(G)$, is the largest size of a clique in $G$.
A {\em stable (or independent) set} in a graph $G$ is a set of pairwise non-adjacent vertices of $G$. The {\em stability (or independence) number} $\alpha(G)$ of $G$ is the largest size of a stable set in $G$.
Given a graph $G$ with weights on its
vertices, the \textsc{Maximum Weight Independent Set (MWIS)} problem is
the problem of finding a stable set in $G$ of maximum total weight.
This problem is known to be \textsf{NP}-hard \cite{alphahard}, but it can be solved efficiently (in polynomial time)  in graphs of bounded treewidth.

The {\em independence number} of a tree decomposition 
$(T, \chi)$ of $G$ is $\max_{t \in V(T)} \alpha(G[\chi(t)])$. The {\em tree independence number} of $G$, denoted $\ta(G)$, is the minimum independence number of a tree decomposition of $G$. The tree-independence number was defined by Dallard, Milani\v{c} and \v{S}torgel \cite{dms2} as a way to understand graphs whose high treewidth can be explained by the presence of a large clique, and 
targeting the complexity of the \textsc{MWIS} problem.
Combining results of  \cite{dms2} and \cite{dfgkm} yields
an efficient algorithm for the \textsc{MWIS} problem for graphs of bounded $\ta$.  In \cite{lima2024tree}, similar algorithmic results are obtained for a much more general class of problems. 
Recently, the study of the structure of graphs with low $\atw(G)$ has gained momentum, see for example \cite{dms3,twVsClique4,twVsClique5}.

A {\em hole} in a graph is an induced cycle of length at least four.
A {\em path} in a graph is an induced subgraph that is a path. The
{\em length} of a path or a hole is the number of edges in it.
We denote by  $P=p_1 \dd \dots \dd p_k$
be a path in $G$ where $p_ip_j \in E(G)$ if and only if $|j-i|=1$. We say that $p_1$ and $p_k$ are
the {\em ends} of $P$. The {\em interior} of $P$, denoted by
$P^*$, is the set $P \setminus \{p_1,p_k\}$.
For $i,j \in \{1, \dots. k\}$ we denote by $p_i \dd P \dd p_j$ the subpath of
$P$ with ends $p_i,p_j$.

A {\em theta} is a graph consisting of two distinct  vertices $a, b$ and three
paths $P_1, P_2, P_3$ from $a$ to $b$, such that $P_i \cup P_j$ is a hole
for every $i,j \in \{1,2,3\}$. It follows that $a$ is non-adjacent to $b$
and the sets $P_1^*,P_2^*, P_3^*$ are pairwise disjoint and
anticomplete to each other.
If a graph $G$ contains an induced subgraph $H$ that is a theta, and $a, b$ are the two vertices of degree three in $H$, then we say that $G$ contains a
theta \emph{with ends $a$ and $b$}.

A {\em pyramid} is a graph consisting of a vertex $a$ and a triangle $\{b_1, b_2, b_3\}$, and three paths $P_i$ from $a$ to $b_i$ for $1 \leq i \leq 3$,
such that  $P_i \cup P_j$ is a hole for every $i,j \in \{1,2,3\}$.
It follows 
that $P_1 \setminus a, P_2 \setminus a , P_3 \setminus a$ are pairwise disjoint,
and the only edges between them are of the form $b_ib_j$.
It also follows that 
at most one of $P_1, P_2, P_3$ has length exactly one. We say that $a$ is the {\em apex} of the pyramid and that $b_1b_2b_3$ is the {\em base} of the pyramid.

(Theta, pyramid)-free graphs have received significant attention in structural graph theory. Often another family of graphs, called ``prisms'', is excluded.
Treewidth and tree-independence number of (Theta, pyramid, prism)-free graphs
were studied in \cite{tw3} and \cite{TI2}. In \cite{tw3}, a logarithmic upper bound on treewidth is obtained (where the bound depends on the clique number of the graph), while in \cite{TI2}  tree-independence number is bounded by a
polylogarithmic function of the number of vertices.

The scope of this paper is broader in the following sense.
Let $W_{t\times t}$ be the $t$-by-$t$ hexagonal grid, and let
$\mathcal{L}_t$ be the family of all graphs $G$ such that $G$ is the line graph of some subdivision of $W_{t \times t}$. 
Observe that  $\bigcup_{t}\mathcal{L}_t$ contains a sequence of graphs for which the treewidth grows asymptotically as the square root of the number of vertices. It follows that  $\mathcal{L}_t$ needs to be excluded to achieve sub-polynomial (in the number of vertices) bounds on treewidth.
Since all graphs in  $\bigcup_{t}\mathcal{L}_t$ are (pyramid, theta)-free
and have clique number at most three, excluding
 $\bigcup_{t}\mathcal{L}_t$  is necessary even in the class of
 (theta, pyramid)-free graphs with bounded clique number.
 The situation for tree-independence number is similar. 

We prove that this necessary condition is, in fact, sufficient in
(theta pyramid)-free graphs. Since
 every $\mathcal{L}_t$ contains a prism if we choose $t$ large enough,
 this is a qualitative generalization of the results of \cite{tw3} and \cite{TI2} (the degree of the polynomial in $\log (|V(G)|)$ in the bound on the tree-independence number is worse here).
For an integer $t$, let $\mathcal{M}_t$ be the class of all (theta, pyramid, $\mathcal{L}_t$)-free graphs. We prove:
\begin{theorem}
    \label{thm: log tw}
    For every positive integer $t$, there is an integer $c=c(t)$ such that every graph $G\in\mathcal{M}_t$ has treewidth at most $\omega(G)^c\log(|V(G)|)$.
\end{theorem}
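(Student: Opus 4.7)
The plan is to deduce Theorem~\ref{thm: log tw} from a pair-separation lemma that should be established as the main technical tool of the paper: for every $G \in \mathcal{M}_t$ and every two non-adjacent vertices $a,b \in V(G)$, there exists a set $S \subseteq V(G) \setminus \{a,b\}$ with $|S| \leq \omega(G)^{c_3(t)}\log(|V(G)|)$ separating $a$ from $b$ in $G$. (The first ``additional'' result stated in the abstract is exactly of this form; I read the ``(theta, pyramid)-free'' hypothesis there as being silently strengthened to membership in $\mathcal{M}_t$, since $\mathcal{L}_t$-freeness is necessary in order to obtain subpolynomial separator bounds at all.)

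Given the pair-separation lemma, I would derive Theorem~\ref{thm: log tw} via the classical balanced-separator route. Concretely, for every $G \in \mathcal{M}_t$ and every non-negative weight function $w$ on $V(G)$, I would produce a set $S$ with $|S| \leq \omega(G)^{c(t)}\log(|V(G)|)$ such that every component of $G \setminus S$ has $w$-weight at most $w(V(G))/2$. Starting with a trial separator, one locates two non-adjacent vertices lying in distinct heaviest components, applies the pair-separator lemma to obtain a small $ab$-separator, and iterates a bounded number of times until the partition is balanced; since $\mathcal{M}_t$ is closed under induced subgraphs, the procedure applies uniformly to every induced subgraph. The classical fact that the maximum balanced-separator size over all induced subgraphs bounds treewidth up to a constant factor then yields $\tw(G) \leq O(\omega(G)^{c(t)}\log(|V(G)|))$, which is Theorem~\ref{thm: log tw}.

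The main obstacle is the pair-separation lemma itself. My plan would be to extend the BFS-layering technique of \cite{tw3}: fix $a$, let $L_i$ denote the $i$-th BFS layer from $a$, and construct an $ab$-separator by choosing, level by level, a small ``dominating'' set through each relevant layer. The (theta, pyramid)-free assumption ensures that each layer's dominator can be kept polynomial in $\omega(G)$, since otherwise one could extract three internally (near-)disjoint $a$-to-$b$ paths crossing a layer, assembling into either a theta or a pyramid. The exclusion of $\mathcal{L}_t$ then caps the number of nontrivial levels by roughly $\log(|V(G)|)$: too many layers carrying genuine transverse structure would permit the extraction of a subdivision of $W_{t \times t}$, whose line graph lies in $\mathcal{L}_t$. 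The delicate point is maintaining polynomial rather than exponential dependence on $\omega(G)$, which likely requires the careful ``covering'' arguments developed in \cite{tw3} and \cite{TI2}, rather than a naive Ramsey-type pigeonhole.
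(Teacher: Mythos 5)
Your high-level instinct — prove a pairwise-separation lemma first and then convert it into a treewidth bound via balanced separators — matches the paper's broad strategy, and the paper's Theorem~\ref{thm: small pairwise separator} is indeed such a lemma. But your proposed deduction from pairwise separation to balanced separators has a genuine gap, and it is not a small one.

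The step ``Starting with a trial separator, one locates two non-adjacent vertices \dots{} applies the pair-separator lemma \dots{} and iterates a bounded number of times until the partition is balanced'' is not justified, and it cannot be made to work from pairwise separability alone. There is no a priori bound on the number of iterations, so the cumulative separator size is not controlled. Worse, the implication you are asserting is false: the class of (theta, pyramid)-free graphs of clique number at most~$3$ is hereditary and, by Theorem~\ref{thm: small pairwise separator}, $O(\log n)$-pairwise separable; yet it contains the line graphs of arbitrarily large subdivided hexagonal walls, which have treewidth $\Theta(\sqrt{n})$. So ``$L$-pairwise separable (even hereditarily) $\Rightarrow$ treewidth $O(L)$'' does not hold, and any argument using only the pair-separation lemma as input must fail. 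Your proposal does restrict to $\mathcal{M}_t$, but it never says where the extra $\mathcal{L}_t$-free hypothesis actually enters the deduction; it doesn't enter through pairwise separation at all, and without invoking it explicitly somewhere you would be proving a false statement.

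The missing ingredient in the paper is Theorem~\ref{thm:domsep}: every $G\in\mathcal{M}_t$ is $d(t)$-breakable, i.e.\ for every weight function there is a balanced separator with a \emph{core} of at most $d(t)$ vertices. This is a separate, substantial structural result (proved in Section~\ref{sec: dbs}), and this is precisely where $\mathcal{L}_t$-freeness is used (via Lemma~\ref{lem:bigatom}). The paper then combines the two ingredients by a black-box theorem (Theorem~\ref{bettersep}, from \cite{tw15}): if $G$ is $L$-pairwise separable \emph{and} $d$-breakable, then $G$ has a $w$-balanced separator of size at most $3Ld$ for every weight function $w$. Only after that does the classical ``balanced separator $\Rightarrow$ treewidth'' step (Lemma~\ref{lemma:sep_tw}) finish the proof. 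You would need to prove something playing the role of $d$-breakability, and an analogue of the combining theorem, before your plan could close.

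A minor misreading worth correcting: the abstract's pairwise-separation statement is genuinely about all (theta, pyramid)-free graphs, with no $\mathcal{L}_t$-free hypothesis; you should not silently strengthen it. The bound $\omega(G)^{c}\log n$ is not subpolynomial for unbounded~$\omega$, and $\mathcal{L}_t$-freeness is simply not needed there. Understanding that $\mathcal{L}_t$-freeness is used \emph{only} in the breakability half of the argument is exactly what your sketch is missing.
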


\begin{theorem}\label{thm: small tree alpha}
    For every positive integer $t$ there exists $c=c(t)$ such that for every graph $G \in \mathcal{M}_t$ on at least $3$ vertices, we have  $\atw(G) \leq \log^c n$.
    \end{theorem}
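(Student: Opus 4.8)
The plan is to reduce \cref{thm: small tree alpha} to the existence of good weighted balanced separators and then split into the cases ``$\omega(G)$ small'' and ``$\omega(G)$ large'', the latter being the heart of the matter.

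First I would invoke the standard equivalence (see \cite{dms2}) that, for a hereditary class, bounding the tree-independence number amounts to finding balanced separators of small independence number: if $k$ is such that every member $H$ of the class, together with every weight function $w\colon V(H)\to\mathbb{R}_{\ge 0}$, admits a set $X\subseteq V(H)$ with $\alpha(H[X])\le k$ for which every component of $H\setminus X$ has $w$-weight at most $\tfrac12 w(V(H))$, then $\atw(H)=O(k)$. Since $\mathcal{M}_t$ is hereditary, it suffices to exhibit $c'=c'(t)$ such that every $G\in\mathcal{M}_t$ on at least three vertices, with any weight function, has such a balanced separator $X$ with $\alpha(G[X])\le\log^{c'}(|V(G)|)$; the constant loss can then be absorbed into the exponent.

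If $\omega(G)$ is at most a fixed power of $\log(|V(G)|)$ this is immediate: \cref{thm: log tw} gives $\tw(G)\le\omega(G)^{c}\log(|V(G)|)\le\log^{O(1)}(|V(G)|)$, and a graph of treewidth $w$ has, for every weight function, a balanced separator of size at most $w+1$ (a suitable bag of an optimal tree decomposition), which has independence number at most $w+1$. The real work is the regime where $\omega(G)$ is large, where \cref{thm: log tw} is powerless: a clique of size $|V(G)|^{\Omega(1)}$ wrecks the treewidth bound yet is harmless for tree-independence number, contributing only $1$ to the independence number of any bag. The idea is to re-examine the structural analysis underlying \cref{thm: log tw} (and the dominated-separator statement of the abstract) while tracking the \emph{independence number} of the separator rather than its size. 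Concretely: if $G$ has a clique cutset, split $G$ along it and recurse, using that decomposing along a clique cutset does not increase $\atw$ and that the atoms are strictly smaller members of $\mathcal{M}_t$; otherwise, I would establish a structure theorem to the effect that a large clique with no clique cutset around it is surrounded by enough tree-like structure that $G$ has a balanced separator expressible as a union of $\log^{O(1)}(|V(G)|)$ sets, each of which is either a clique or is dominated by at most $d(t)$ vertices inside a region where those neighbourhoods are clique-like, so that each such set has independence number at most $d(t)+1$. Summing over the polylogarithmically many pieces gives $\alpha(G[X])\le\log^{c'}(|V(G)|)$, and the recursion terminates either at the clique-cutset decomposition or once $\omega$ has fallen below a power of $\log(|V(G)|)$, where the previous sentence applies.

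The main obstacle is precisely this large-clique structure theorem: one must show that a large clique in a (theta, pyramid, $\mathcal{L}_t$)-free graph either yields a clique cutset or is enveloped by structure from which a balanced separator can be built out of polylogarithmically many clique-like pieces --- the sizes of these pieces being what inflates to the $\omega(G)^{c}$ factor of \cref{thm: log tw}, while their independence numbers remain bounded. This is also the point at which $\mathcal{L}_t$-freeness is indispensable: line graphs of subdivisions of large walls are (theta, pyramid)-free, have clique number $3$ and bounded maximum degree, yet their treewidth, and hence their tree-independence number, grows like $\sqrt{|V(G)|}$, so no argument that ignores $\mathcal{L}_t$ can succeed. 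By comparison, the reduction to balanced separators and the small-clique case are routine.
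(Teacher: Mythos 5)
Your reduction to balanced separators of small independence number and your disposal of the small-$\omega(G)$ regime are fine, but the large-$\omega(G)$ regime --- which you yourself identify as ``the heart of the matter'' --- is not proved; it is only announced. The pivotal sentence is that you ``would establish a structure theorem'' producing a balanced separator that is a union of polylogarithmically many pieces, each either a clique or ``dominated by at most $d(t)$ vertices inside a region where those neighbourhoods are clique-like, so that each such set has independence number at most $d(t)+1$.'' Nothing in the proposal supplies such regions, and the inference is false without them: a set dominated by $d(t)$ vertices can have arbitrarily large independence number (already the closed neighbourhood of a single vertex in a theta-free graph can contain a huge stable set), so the bound $d(t)+1$ on the independence number of each piece rests entirely on the unproved ``clique-like'' hypothesis. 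Likewise the clique-cutset recursion is asserted without any argument that clique cutsets exist when $\omega$ is large, or that the recursion terminates with the claimed bounds. In short, the hard half of the argument is a plan, not a proof.

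The paper closes exactly this gap by quoting a black box: Theorem 1.1 of \cite{CESL} (stated here as \cref{thm: from polylog w to polylog}) says that for any hereditary class, a treewidth bound of the form $(\omega(G)\log|V(G)|)^{c}$ is \emph{equivalent} to a tree-independence-number bound of the form $\log^{c'}|V(G)|$. Combined with \cref{thm: log tw}, which gives $\tw(G)\le \omega(G)^{c}\log|V(G)|\le(\omega(G)\log|V(G)|)^{c}$, this yields \cref{thm: small tree alpha} in one line. The content you are trying to reconstruct by hand --- how to trade a polynomial dependence on $\omega$ in a separator's \emph{size} for a polylogarithmic bound on a separator's \emph{independence number} --- is precisely the content of that cited theorem, and it is genuinely nontrivial. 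Either cite it, in which case your case analysis is unnecessary, or prove it, which your sketch does not do.
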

We remark that \cref{thm: small tree alpha} follows immediately from
\cref{thm: log tw} using a result of \cite{CESL}; we explain this in Section~\ref{sec: main results}.
\cref{thm: log tw}  is tight since there exist  (theta, triangle)-free graphs with logarithmic treewidth  \cite{mainconj}.  For the same reason
\cref{thm: small tree alpha} is tight up to the degree of the
polynomial (in fact, we do not have a counterexample to $c=1$).

As is explained in \cite{tw3},  by the celebrated Courcelle's theorem \cite{COURCELLE}, \cref{thm: log tw} also implies the existence of polynomial time algorithms for a large class of \textsf{NP}-hard problems, such as \textsc{Stable Set}, \textsc{Vertex Cover}, \textsc{Dominating Set}, and \textsc{Coloring},
when the input restricted to graphs in $\mathcal{M}_t$ with bounded clique
number. Similarly, the algorithmic implications of
\cref{thm: small tree alpha} using results of \cite{lima2024tree} are discussed in \cite{TI2}.

Finally, \cref{thm: small tree alpha} is a promising step toward the following:
\begin{conjecture} [from \cite{TIV}]
    \label{conj:smalltreealph}
    For every positive integer $t$, there is an integer $d=d(t)$ such that for every $n\ge 2$, every
    $n$-vertex graph with no induced minor isomorphic to $K_{t,t}$ or to $W_{t\times t}$ has $\atw$ at most $\log^d n$.
\end{conjecture}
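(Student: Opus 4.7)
The conjecture extends \cref{thm: small tree alpha} by replacing the induced-subgraph obstructions (theta, pyramid, $\mathcal{L}_t$) with the weaker induced-minor obstructions ($K_{t,t}$ and $W_{t\times t}$). My plan is to reduce to the setting of \cref{thm: small tree alpha} and then invoke it as a black box. First I would exploit the following reductions: every theta contains $K_{2,3}$ as an induced minor, obtained by contracting each of its three internally disjoint paths to a single vertex; and every graph in $\mathcal{L}_t$ contains $W_{t\times t}$ as an induced minor, obtained by contracting each subdivided-edge path of the line graph and each vertex-clique. Consequently, any $K_{3,3}$-induced-minor-free and $W_{t\times t}$-induced-minor-free graph is automatically theta-free and $\mathcal{L}_t$-free, and the only obstruction from $\mathcal{M}_t$ that the conjecture's hypotheses do not outlaw is the pyramid.

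The next step is to show that the pyramids inside a $(K_{t,t}, W_{t\times t})$-induced-minor-free graph $G$ are structurally limited. Concretely, I would try to prove that if $G$ contains many ``essentially disjoint'' pyramids sharing a common apex, or with bases arranged in a grid-like pattern, then $G$ has a large $K_{t,t}$ or $W_{t\times t}$ induced minor. Given such a bound, one can either contract each pyramid to a single vertex, producing a pyramid-free auxiliary graph on which \cref{thm: small tree alpha} applies directly; or proceed via a balanced-separator argument analogous to the third bullet of the abstract, treating each pyramid as a bounded-size obstacle whose contribution to independence number is absorbed into the polylogarithmic factor. In either case the recursion driving the proof of \cref{thm: small tree alpha} produces the desired tree decomposition.

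The principal obstacle is the pyramid step. A pyramid contains $K_4$ as an induced minor but not $K_{2,3}$, so there is no clean Menger-style characterization of ``many pyramids'' in terms of induced minors, unlike the correspondence that thetas enjoy with $K_{2,3}$. Moreover, induced minors are delicate in this setting: contractions can merge pyramids with surrounding vertices and erase independent-set witnesses that would otherwise be controlled. A successful proof will likely require a ``three-in-a-tree''-style lemma for pyramids under induced-minor hypotheses, parallel to the induced-subgraph arguments that underpin \cref{thm: log tw}. Formulating and proving such a lemma appears to be the step that genuinely separates \cref{thm: small tree alpha} from \cref{conj:smalltreealph}, and I would not expect it to fall to a routine calculation.
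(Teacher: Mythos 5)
The statement you were asked to prove is not a theorem of the paper at all: it is an open conjecture, explicitly attributed to \cite{TIV} and explicitly described in the surrounding text as something that \cref{thm: small tree alpha} is merely ``a promising step toward.'' The paper does not prove it, and no reduction to \cref{thm: small tree alpha} is offered or claimed.

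Your proposed reduction, moreover, breaks down already at the theta step. Contracting the three internally-disjoint paths of a theta yields $K_{2,3}$ as an induced minor, not $K_{3,3}$ and certainly not $K_{t,t}$ for larger $t$. So for $t\ge 3$ the hypothesis ``no induced minor isomorphic to $K_{t,t}$'' does \emph{not} exclude thetas: the complete bipartite graph $K_{2,3}$ itself is a theta and is $K_{t,t}$-induced-minor-free for every $t\ge 3$. Consequently the class in the conjecture is not a subclass of $\mathcal{M}_t$, and the black-box invocation of \cref{thm: small tree alpha} is unavailable. You correctly flag the pyramid case as a genuine obstacle (pyramids also escape the $K_{t,t}$ and $W_{t\times t}$ exclusions for $t\ge 4$, since a pyramid only forces a $K_4$ minor), but the theta case suffers from the same defect, so the reduction loses \emph{two} of the three obstructions, not one. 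Any proof of \cref{conj:smalltreealph} would need to work directly with graphs that may contain both thetas and pyramids, which is a qualitatively harder setting than the one handled in this paper; the paper's own framing as an open conjecture reflects exactly that.
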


Let $G$ be a graph and let $A,B\subseteq G$ be disjoint. We say that a set $X\subseteq V(G)\setminus(A\cup B)$ \textit{separates} $A$ from $B$ if for every connected component $D$ of $G\sm X$, $D\cap A =\mt$ or $D\cap B = \mt$.
Let $G$ be a graph and let $a,b$ be two non-adjacent vertices of $G$. We say that a set $X\subseteq V(G)\setminus\set{a,b}$ \textit{separates} $a$ from $b$ if for every connected component $D$ of $G\sm X$, $|D\cap\set{a,b}|\leq 1$.
A graph is said to be \textit{$k$-pairwise separable} if for every pair of non-adjacent vertices of $G$, there exists a set $X$ with $|X| \leq k$  that separates them from each other.

In order to prove \cref{thm: log tw}, we need a result
on pairwise-separability. We were able to obtain such a result in a more
general setting (without excluding $\mathcal{L}_t$), which may be of independent interest.
Here we denote by $\class$ the class of ($K_t$, pyramid, theta)-free graphs.
\begin{restatable}{theorem}{pairwisesep}\label{thm: small pairwise separator}
    For every integer $t\geq 2$, there exists a positive integer  $c$ such that every 
    $n$-vertex graph in $\class$ is $t^c \log n$-pairwise separable.
\end{restatable}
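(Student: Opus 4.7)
My plan is to prove the theorem by induction on $n = |V(G)|$, reducing it to a balanced-separator statement of the form: \emph{there is a constant $c' = c'(t)$ such that every $G \in \class$ on $n \geq 2$ vertices admits a set $S \subseteq V(G)$ with $|S| \leq t^{c'}$ such that every connected component of $G \setminus S$ has at most $n/2$ vertices.} Given such a lemma, the theorem follows by a standard argument. For non-adjacent $a, b \in V(G)$, apply the lemma to obtain $S$; if $a$ and $b$ lie in different components of $G \setminus S$ then $S$ is already an $a$-$b$ separator, and otherwise they lie in a common component $C$ with $|V(C)| \leq n/2$, to which the inductive hypothesis gives an $a$-$b$ separator $X \subseteq V(C)$ of size at most $t^{c'}\log(n/2)$. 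Since every $a$-$b$ path in $G$ either meets $S$ or remains in $C$ and is hit by $X$, the set $S \cup X$ is an $a$-$b$ separator in $G$ of size at most $t^{c'}\log n$.

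To prove the balanced-separator statement, the strategy is to fix a shortest path $P$ between two well-chosen endpoints (for instance, weighted centroids of a BFS rooted at a carefully selected vertex) and analyze the attachments of $V(G) \setminus V(P)$ to $P$. Theta-freeness forces the neighborhood of any external vertex on a shortest path $P$ to lie in an interval of $P$ of bounded length, and pyramid-freeness further constrains how two external vertices with overlapping attachment intervals can interact. Assigning to each external vertex $v$ an interval $I(v) \subseteq V(P)$ recording its attachment, one would then choose a ``median'' vertex $p \in V(P)$ that balances the weights of external vertices whose intervals lie strictly to the left of $p$ versus strictly to the right of $p$, and build $S$ from a short sub-path of $P$ around $p$ together with the set of ``crossing'' witnesses, i.e., external vertices $v$ whose interval $I(v)$ properly straddles $p$.

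The main obstacle is bounding the number of crossing witnesses by a quantity depending only on $t$. Since we are only excluding $K_t$, the closed neighborhood of a single vertex may be arbitrarily large, so the naive star cutset does not suffice. The key is a Ramsey-style extraction: any sufficiently large family of external vertices whose intervals straddle $p$ must, by $K_t$-freeness, contain a large independent subfamily, whose members by theta- and pyramid-freeness must organize into parallel induced paths along $P$; iterating this extraction eventually produces a forbidden theta, pyramid, or $K_t$. Turning this scheme into a clean $t^{c'}$ bound demands careful bookkeeping of how attachment intervals can overlap, and is further complicated by the fact that prisms are permitted in $\class$ for $t \geq 4$ (in contrast to the setting of \cite{tw3, TI2}), so prism-based ``ladders'' attached to $P$ must be ruled out via $K_t$-freeness rather than by direct exclusion. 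This Ramsey-plus-structural extraction is by far the most delicate step of the argument and the likely source of the polynomial-in-$t$ dependence.
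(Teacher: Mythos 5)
Your plan reduces the theorem to the claim that every $G \in \class$ admits a balanced separator $S$ (each component of $G \setminus S$ has at most $|V(G)|/2$ vertices) of size $|S| \leq t^{c'}$ depending on $t$ alone. That claim is false, and the gap is not repairable along these lines. Line graphs of subdivisions of the hexagonal grid $W_{k\times k}$ (for arbitrary $k$) are (theta, pyramid)-free with clique number at most $3$, so they lie in $\class$ for every $t \geq 4$, yet their treewidth grows like $\sqrt{n}$. If your hereditary balanced-separator lemma held, one could iterate it to build a tree decomposition of width $O(t^{c'}\log n)$ for every graph in $\class$, contradicting those examples. (Even for $\mathcal{M}_t$, where $\mathcal{L}_t$ is additionally excluded, there are (theta, triangle)-free graphs of treewidth $\Theta(\log n)$, so constant-size balanced separators still do not exist.) Moreover, even if one weakens the lemma to separators of size $O(t^{c'}\log n)$, your induction $f(n) \leq s(n) + f(n/2)$ only gives $f(n) = O(t^{c'}\log^2 n)$, which misses the target $t^{c'}\log n$ by a logarithmic factor.

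The heuristic you describe for proving the lemma --- analyzing attachment intervals of external vertices to a shortest path and bounding the ``crossing'' ones via $K_t$-freeness and Ramsey extraction --- is in the right spirit for a different result in this paper: the proof that graphs in $\mathcal{M}_t$ are $d$-breakable (\cref{thm:domsep}) does work with a path $P$, its neighborhood, and the intervals $I(n)$ of external vertices, but the conclusion there is a balanced separator dominated by $d$ vertices (a small \emph{core}, hence potentially $\Theta(d\cdot\Delta(G))$ vertices), and it crucially uses the exclusion of $\mathcal{L}_t$ to invoke \cref{lem:bigatom}. It does not give bounded-size separators, nor does it apply to $\class$.

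The paper's actual proof of \cref{thm: small pairwise separator} takes a genuinely different route. It fixes the two non-adjacent vertices $a,b$ up front and grows a ``cooperative'' set $X_i$ around $a$, maintaining a partial separator $C_i$, with the invariant that $\delta(X_i) \cup C_i$ separates $a$ from $b$. At each step it analyzes the attachments of the component of $b$ via two matroids on $N(X_i)$ --- a transversal matroid given by matchings into $\delta(X_i)$, and a gammoid given by disjoint linkages to $b$. Extended strip decompositions (via constricted sets, \cref{Theorem: small set hit all constricted path to v}) show these two matroids have small common independent sets, so the Matroid Intersection Theorem yields a partition of $N(X_i)$ that can be cut cheaply using Menger and K\H{o}nig. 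Progress is measured not by $|V(G)|$ but by the degenerate partition of \cref{degenerate partition}, which bounds the number of improvement steps by $p(\omega)\log n$; each step adds $p(\omega)$ vertices to $C$, yielding the bound. The $\log n$ factor therefore comes from the degeneracy-ordering argument on theta-free graphs, not from halving $n$ via balanced separation --- which, as noted above, is unavailable here.
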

Once again, using a  result of \cite{CESL}, we also get a version of this theorem where the size of the separator is replaced by its independence number
(see Section~\ref{sec: main results} for details):
\begin{theorem}\label{thm: small alpha pairwise separator}
   There exists an integer  $c$  such that for every $n$-vertex (theta, pyramid)-free graph $G$ and every non-adjacent pair $u,v \in V(G)$ there exists $X \subseteq V(G)\sm\set{u,v}$ with $\alpha(X) \leq  \log^c n$ such that $X$ separates $u$ from $v$.
\end{theorem}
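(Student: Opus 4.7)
The plan is to derive \cref{thm: small alpha pairwise separator} as a direct corollary of \cref{thm: small pairwise separator} via a general alpha-boosting result from \cite{CESL}, in the same spirit in which \cref{thm: small tree alpha} is deduced from \cref{thm: log tw}. In abstract form, the relevant statement from \cite{CESL} asserts that for a hereditary graph class $\mathcal{C}$, a separator bound that is polynomial in the clique number and polylogarithmic in the number of vertices, valid for all $K_t$-free members of $\mathcal{C}$, can be upgraded to a bound that is polylogarithmic in the number of vertices but on the \emph{independence number} of the separator (instead of its size), valid over all members of $\mathcal{C}$ with no restriction on $\omega$.

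Concretely, I would fix a (theta, pyramid)-free graph $G$ with non-adjacent $u,v \in V(G)$ and observe that the class of (theta, pyramid)-free graphs is hereditary, so \cref{thm: small pairwise separator} applies uniformly to every induced subgraph of $G$ containing $u$ and $v$. Feeding this bounded-$\omega$ hypothesis into the boosting framework of \cite{CESL} yields an absolute constant $c$ and a set $X \subseteq V(G) \setminus \{u,v\}$ with $\alpha(X) \leq \log^c n$ that separates $u$ from $v$, which is exactly the conclusion of the theorem.

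The main obstacle is confirming that the boosting framework applies to the pairwise-separation problem, since the result in \cite{CESL} is typically phrased in terms of balanced separators (yielding tree-independence-number bounds). However, the combinatorial core of that argument is a clique-peeling iteration: one repeatedly extracts a large clique from the relevant subgraph, invokes the bounded-clique-number separator guarantee to find a small separator in what remains, and accumulates the cliques together with this separator. The cumulative object is a union of polylogarithmically many cliques and therefore has polylogarithmic independence number. This scheme carries over verbatim to pairwise separation of a fixed non-adjacent pair $\{u,v\}$, since at each stage the clique extraction can be performed in the subgraph induced on the current common component of $u$ and $v$, while the separator built so far still separates $u$ from $v$ at the end. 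We spell out this adaptation in \cref{sec: main results}.
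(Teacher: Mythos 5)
Your approach is exactly the paper's: cite the pairwise-separator bound together with the $\alpha$-boosting result of \cite{CESL}. The one thing to note is that you needn't re-derive the boosting step: Theorem~1.4 of \cite{CESL} (stated in the paper as \cref{thm: from pairwise to alpha pairwise}) already handles pairwise separation directly, not only balanced separators, so your ``clique-peeling'' adaptation is unnecessary. One small correction: you should invoke \cref{thm: small pairwise separator omega version} rather than \cref{thm: small pairwise separator}, since the latter's constant $c$ is allowed to depend on $t$, whereas \cref{thm: from pairwise to alpha pairwise} requires the class to be $(\omega(G)\log|V(G)|)^c$-pairwise separable for a single uniform exponent $c$ -- which is precisely what \cref{thm: small pairwise separator omega version} delivers via a fixed polynomial $p(\omega(G))$.
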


For a graph $G$, a function  $w\colon V(G) \rightarrow [0,1]$ is a {\em weight function} if $\sum_{v \in V(G)} w(v) \leq 1$.
For  $S \subseteq V(G)$, we write $w(S) \coloneqq  \sum_{v \in S} w(v)$.
A weight function $w$ is a {\em normal weight function} on $G$ if $w(V(G))=1$. If $0<w(V(G))<1$,  we call the function $w'\colon V(G) \rightarrow [0,1]$ given by $w'(v) = \frac{w(v)}{\sum_{u \in V(G)} w(u)}$ the
{\em normalized weight function of $w$}.
Let $c \in [0, 1]$ and let $w$ be a weight function on $G$.
A set $X \subseteq V(G)$ is a {\em $(w,c)$-balanced separator} if $w(D) \leq  c$ for every component $D$ of $G \setminus X$.
The set $X$ is a {\em $w$-balanced separator} if $X$ is a $(w,\frac{1}{2})$-balanced separator.
Given two sets of vertices $X$ and $Y $ of $G$, we say that $X$ is a {\em core}
for $Y$ if $Y \subseteq N[X]$.
A graph $G$ is said to be {\em $k$-breakable} if for every weight function $w\colon V(G) \rightarrow [0,1]$, there exists a $w$-balanced separator with a core of size strictly less than $k$. When the weight function $w$ is clear from the context, we may omit it from the notation.
Our last result is:
\begin{restatable}{theorem}{domsep}
  \label{thm:domsep}
    For every positive integer $t$, there is an integer $d=d(t)$ such that every graph $G \in \mathcal{M}_t$ is $d$-breakable.
\end{restatable}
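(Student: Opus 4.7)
The plan is to prove Theorem \ref{thm:domsep} by an iterative greedy construction: build a set $X$ one vertex at a time until $N[X]$ itself is a $w$-balanced separator, and then argue that this procedure must terminate after at most $d(t)$ steps. Concretely, given $G\in\mathcal{M}_t$ and a weight function $w$, set $X_0=\emptyset$, and at step $i+1$, if $N[X_i]$ is a $w$-balanced separator then stop and output $X_i$; otherwise let $D_i$ be the unique component of $G\setminus N[X_i]$ with $w(D_i)>\tfrac12$, and pick $v_{i+1}\in D_i$ that shrinks $D_i$ as much as possible, for instance maximising $w(D_i\cap N[v])$ over $v\in D_i$. Set $X_{i+1}=X_i\cup\{v_{i+1}\}$. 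The separator we output is $N[X]$, which has core $X$.

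For the termination bound, suppose toward a contradiction that the procedure runs for $k$ steps with $k$ much larger than some function of $t$. Then we have nested heavy components $D_0\supsetneq D_1\supsetneq\cdots\supsetneq D_{k-1}$ and chosen vertices $v_1,\dots,v_k$ with $v_{i+1}\in D_i$ and $v_{i+1}$ anticomplete to $\{v_1,\dots,v_i\}$. Each $v_{i+1}$ can be joined to every earlier $v_j$ by an induced path that starts at $v_{i+1}$, lives inside $D_i\cup\{v_j\}$, and enters $N(v_j)$ from a vertex of $D_{j-1}\setminus N[v_j]$ (which still contains $D_i$). This yields a rich system of induced paths attaching pairwise to the $v_i$'s. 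Because $G$ is (theta, pyramid)-free, these paths must be organised into a highly regular grid-like pattern, and a standard extraction argument, of the same flavour as in the proof of Theorem \ref{thm: small pairwise separator}, shows that for $k$ large enough in terms of $t$ one can find inside this configuration a line graph of a subdivision of $W_{t\times t}$, i.e.\ a member of $\mathcal{L}_t$. This contradicts $G\in\mathcal{M}_t$ and forces $k\leq d(t)$.

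The main obstacle is this termination lemma: controlling how the freshly chosen $v_{i+1}$ attaches to $N[X_i]\cup\{v_1,\dots,v_i\}$. Naively the paths from $v_{i+1}$ to the earlier $v_j$'s might bunch together at a common branching vertex, producing a theta or a pyramid long before the grid materialises; one resolves this by refining the choice of $v_{i+1}$, or by re-routing the connecting paths through deeper parts of $D_i$, whenever such a configuration threatens, and the strict nesting $D_i\supsetneq D_{i+1}$ guarantees enough room to do so. The (theta, pyramid)-free hypothesis is then used at each re-routing step to exclude the unwanted alternatives, so that the only surviving obstruction is a member of $\mathcal{L}_t$. Finally, although $\omega(G)$ is unbounded in $\mathcal{M}_t$, each clique is dominated by any one of its vertices, so cliques create no additional cost in the domination budget, and the bound $d(t)$ is independent of $\omega$.
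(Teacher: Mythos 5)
Your greedy scheme produces an independent set $\{v_1,\dots,v_k\}$ with nested heavy components $D_0\supsetneq D_1\supsetneq\cdots$, and the statement you need is that $k$ is bounded by a function of $t$; but your ``termination lemma'' is not a proof, it is a restatement of the theorem. The claim that the system of connecting paths ``must be organised into a highly regular grid-like pattern'' and that a ``standard extraction argument'' produces a member of $\mathcal{L}_t$ is precisely the hard content, and nothing of the sort is standard. In a (theta, pyramid)-free graph, a large anticomplete family of vertices pairwise joined by induced paths through a common connected region can realize many configurations other than a hexagonal-grid line graph (wheels, long jewels, near-prisms, clique cutsets, etc.), and ruling these out is exactly where all of the work lies. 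Your remedy --- ``refine the choice of $v_{i+1}$ or re-route the paths whenever a theta or pyramid threatens'' --- is not an argument: it does not identify which re-routing exists, why it always exists, or why the process of re-routing itself terminates. Moreover, because you pick $v_{i+1}$ only to maximise $w(D_i\cap N[v_{i+1}])$, there is no structural control at all on how $v_{i+1}$ attaches to the earlier $D_j$'s; in particular nothing prevents all the paths from $v_{i+1}$ to $v_1,\dots,v_i$ from funnelling through a bounded neighbourhood, in which case the ``rich system of induced paths'' degenerates.

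By contrast, the paper's proof of Theorem~\ref{thm:domsep} does not grow a dominating set greedily one vertex at a time. It starts from a path $P$ with $N[P]$ a $w$-balanced separator (Lemma~5.3 of \cite{QPTAS}), takes $P$ shortest, and then studies the interval structure of the attachments of the heavy component $B$ to $P$. The real engine is Theorem~\ref{stripdecomp} (three-in-a-tree / extended strip decompositions) applied locally around a constricted triple $\{z_1,z_2,z_3\}$, combined with Lemma~\ref{lem:bigatom}, which is where the $\mathcal{L}_t$-freeness enters: it guarantees that an extended strip decomposition with no heavy atom yields a balanced separator with a core of size $O(t^9\log^c t)$. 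The final contradiction then comes from a left/right ``pointing'' analysis of heavy atoms along $P$. Your proposal never invokes extended strip decompositions, atoms, or Lemma~\ref{lem:bigatom}, which are indispensable here: without them one does not get any handle on the shape of $D_i$ around $N(X_i)$. So the gap is not a technical slip but the absence of the main argument; as written, the proposal reduces the theorem to an unproved and, in its stated form, false extraction claim.
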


In Section~\ref{sec: main results},  we follow the outline of the proof (and use some results) of \cite{tw15} to deduce \cref{thm: log tw} from \cref{thm: small pairwise separator} and \cref{thm:domsep}.

\subsection{Proof outline and organization}

Most of the work in this paper is devoted to proving
\cref{thm: small pairwise separator} and  \cref{thm:domsep}. 
\cref{thm: log tw}, \cref{thm: small tree alpha} and \cref{thm: small alpha pairwise separator} are deduced from them using existing results in Section~\ref{sec: main results}.  An important tool for both the main proofs is ``extended strip decompositions'' from \cite{Threeinatree}. They are introduced in
\cref{sec:esd definition}.

Let us start by outlining the proof of Theorem~\ref{thm:domsep} that is proved in  \cref{sec: dbs}. The high-level idea of the proof is similar to
\cite{TIV}, but the details are different because of the different properties of the graph class in question.
Let $G \in \mathcal{M}_t$. We may assume that $G$ is connected.
For a contradiction, we fix a weight function $w$ such that  $G$ does not have a $w$-balanced separator with a small core.
By using the normalized weight function of $w$, we may assume that $w$ is normal. By Lemma~5.3 of \cite{QPTAS}, there is a path $P=p_1 \dd \dots \dd p_k$ in $G$ such that $N[P]$ is a $w$-balanced separator in $G$. We choose  $P$ with $k$
minimum; consequently we  may assume that there is a component $B$ of $G \setminus N[P \setminus \{p_k\}]$
with $w(B) > \frac{1}{2}$. We now analyze the structure of the
set $N=N(B) \subseteq N(P)$. To every vertex $n \in N$ we assign
a subpath $I(n)$ of $P$, that is the minimal subpath of $P$ that contains
$N(n) \cap P$. We define a new graph $H$  with vertex set $N$ where $n_1$ and $n_2$ are adjacent if and only if $I(n_1) \cap I(n_2) \neq \emptyset$.
Let $S$ be a maximum stable set in $H$.
We first show that for every $s \in S$, we can find a small core (in $G$)
for the set $N_H[s]$ (when viewed as a subset of $G$). This, in particular, allows us to assume that $|S|$ is
large. Now, we focus on one vertex  $n \in S$ and use it 
to show that $G$ (with a subset with a small core deleted)  admits an extended strip decomposition. This allows us to produce a separator $X(n)$ with a small core that is not yet balanced, but exhibits several useful properties. More explicitly, the component of
$G \setminus X(n)$ with maximum $w$-weight  only meets $P$ on one side of $I(n)$. So $n$ either ``points left'' or ``points right''.
Then we show that the vertex of $S$ with the earliest neighbors in $P$ points right, and the vertex of $S$  with the latest neighbors in $P$ points left. Now we focus on two vertices  $n,n' \in S$ such that $I(n)$ and $I(n')$ are consecutive along $P$  where the change first occurs, and conclude
that $X(n) \cap X(n')$ is a $w$-balanced separator in $G$.
 This completes the proof of Theorem~\cref{thm:domsep}.

We now turn to the proof of \cref{thm: small pairwise separator}. This is the most novel part of the paper, where several new ideas are introduced. Let $G \in \class$ and let $a,b \in V(G)$ be non-adjacent.  We consider carefully chosen subsets $X_i$ of
balls of radius $i$  around $a$ with $a \in X_i$,  and  iteratively construct a set  $C$
that, when the process stops,  separates $a$ from $b$.
We will show that $|C| \leq t^c  \log (|V(G)|)$ (where the same $c$ works for all graphs in $\class$), thus proving \cref{thm: small pairwise separator}.

Throughout the process, $X_i$ satisfies two key properties. The first one is
that $X_i$ is ``cooperative'' (as defined in \cref{sec: cooperative matroid}).
The second one is that at each step of the construction, we only
add to $X_i$ vertices that have a lower value in the partition of $V(G)$
defined by \cref{degenerate partition} (see \cref{sec: cooperative set evolving degenerate partition} for the defintion of ``value'').

At each step $i$, we examine the attachments $N_i$ of the component $D_i$ of $G \setminus (N[X_i] \cup C)$ with $b \in D$. Note that $N(D_i) \subseteq N(X_i)$. First, we show that $N_i \cap N(b)$ is small and add $N_i \cap N(b)$ to $C$.
Next we  define two matroids on $N_i \setminus N(b)$: $\mathcal{M}_1$ is the  matching matroid into
$X_i$, and
$\mathcal{M}_2$  is the matroid whose independent sets are linkable to $b$ by disjoint paths with interior in $D_i$; both matroids are defined precisely in \cref{sec: cooperative matroid}. Suppose first that there is a large subset $I$ of $N_i$ that is independent in both matroids. We use the fact that $I$ is independent in $\mathcal{M}_1$ and that $X_i$ is cooperative to obtain a large subset $Z$ of $I$ that is  ``constricted'' in $D_i \cup Z$ (see \cref{sec:esd definition} for a precise definition).
This allows us to construct an extended strip decomposition of $(D_i \cup Z, Z)$. Now, we use results from \cref{sec: constricted in ktt free} to get a contradiction to the fact that $Z$ is independent in $\mathcal{M}_2$.

Thus, we may assume that no such set $I$ exists.  We apply the Matroid Intersection Theorem to construct a partition $(A_1,A_2)$ of $N_i \setminus N(b)$ such that the sum of
the ranks $rk_{\mathcal{M}_i}(A_i)$ is small.  By  Menger's Theorem, there is a small subset $Z_2$ of $D_i \cup A_2$ such that $Z_2$ separates $A_2$ from $b$; we
add $Z_2$ to $C$. By K\"{o}nig's Theorem, there is a small subset $Z_1$
of $N_{X_i}(N_i) \cup N_i$ such that every edge between $A_1$ and $X_i$
has an end in $Z_1$. We add $Z_1 \cap N_i$ to $C$  and focus on $Z_1 \cap X_i$.
By \cref{degenerate partition}, the number of vertices with a neighbor in $Z_1 \cup X_i$
whose value is higher  than the maximum value of a vertex in $Z_1 \cup X_i$ 
is bounded by $t^{c'}$ (where the same $c'$ works for every graph in $\class$); we add all such vertices to $C$. Note that at this point
$N_i \subseteq N(Z_1) \cup C$.
If $N(Z_1) \not \subseteq C$, 
we construct $X_{i+1}=X_i \cup (N(Z_1) \setminus C)$ and continue.

By  \cref{degenerate partition} for   some $i  \leq \log(|V(G)|)$
it holds that $N(Z_1) \subseteq C$.
Now $N_i \subseteq C$  and consequently  
$C$ separates $X_i$ from $b$. Since  $a \in X_i$,  $C$  has the required
separation properties, and we stop the process. This completes the description
of the proof of \cref{thm: small pairwise separator}.

This paper is organized as follows.
In \cref{sec:esd definition}, we define constricted sets and extended strip decompositions. In \cref{sec: dbs}, we prove \cref{thm:domsep}. In \cref{sec: constricted in ktt free}, we establish an important property of constricted sets in $K_{t,t}$-free graphs (a super-class of $\class$),
that will be used in the proof of \cref{thm: small pairwise separator} to
obtain a bound on the size of a set that is independent in both
$\mathcal{M}_1$ and $\mathcal{M}_2$.
In \cref{sec: cooperative matroid}, we define 
cooperative sets and prove several lemmas about their properties; that is also where we describe the application of the Matroid Intersection Theorem. In \cref{sec: cooperative set evolving degenerate partition}, we prove
\cref{thm: small pairwise separator}.
Finally, in \cref{sec: main results}, we show how to use \cref{thm: small pairwise separator} and \cref{thm:domsep} to prove  \cref{thm: log tw}, \cref{thm: small tree alpha}, and \cref{thm: small alpha pairwise separator}.

\section{Constricted sets and extended strip decompositions} \label{sec:esd definition}

\MariaIsEditing{\textcolor{red}{DO NOT MODIFY! Maria has the token!}}
An important tool in the proof of Theorem~\ref{thm: small pairwise separator} is the ``extended strip decompositions'' of \cite{Threeinatree}. We explain this now after introducing some definitions from \cite{TIV}.
Let $G,H$ be graphs, and let $Z \subseteq V(G)$. Let $W$ be the set of vertices of degree one in $H$. Let $T(H)$ be the set of all triangles of $H$.
Let $\eta$ be a map with domain
the union of $E(H)$, $V(H)$, $T(H)$,  and the set of all pairs $(e, v)$ where $e \in E(H)$, $v \in V (H)$ and $e$ incident
with $v$, and range $2^{V(G)}$,  satisfying the following conditions:
\begin{itemize}
\item For every $v \in V(G)$ there exists a unique
$x \in E(H)  \cup V(H) \cup T(H)$ such that $v \in \eta(x)$.
\item For every $e \in E(H)$ and $v \in V(H)$ such that $e$ is incident with $v$, $\eta(e,v) \subseteq \eta(e)$
\item Let  $e,f \in E(H)$ with  $e \neq f$,  and $x \in \eta(e)$ and $y \in \eta(f)$. Then $xy \in E(G)$ if and only if
$e, f$ share an end-vertex $v$ in $H$, and $x \in \eta(e, v)$ and
$y \in  \eta(f, v)$.
\item If $v \in V (H)$, $x  \in \eta (v)$, $y \in V (G) \setminus \eta(v)$, and
$xy \in E(G)$, then $y \in \eta(e, v)$ for some
$e \in E(H)$ incident with $v$.
\item  If $D \in T(H)$,  $x \in \eta (D)$, $y \in V (G) \setminus \eta(D)$
and $xy \in E(G)$, then
$y \in \eta(e,u) \cap \eta(e,v)$  for some distinct $u, v \in D$, where
$e$ is the edge $uv$ of $H$.
\item  $|Z| = |W |$, and for each $z \in Z$ there is a vertex $w \in W$ such that $\eta(e, w) = \{z\}$, where $e$ is the 
(unique) edge of $H$ incident with $w$.
\end{itemize}
Under these circumstances, we say that  $\eta$ is an {\em extended strip decomposition of
$(G,Z)$ with pattern $H$} (see Figure~\ref{fig:ex esd}).
As a slight abuse of notation, for $v\in V(G)$ we will denote by $\eta^{-1}(v)$ the unique $x \in E(H)  \cup V(H) \cup T(H)$ such that $v \in \eta(x)$, as guaranteed by the first condition.

Let  $e$ be an edge of $H$ with ends $u,v$. An {\em $e$-rung} in
$\eta$ is a path $p_1 \dd \dots \dd p_k$ (possibly $k=1)$ in $\eta(e)$,
with $p_1 \in \eta(e,v)$, $p_k \in \eta(e,u)$ and
$\{p_2, \dots, p_{k-1}\} \subseteq \eta(e) \setminus (\eta(e,v)  \cup \eta(e,u))$. We say that $\eta$ is  {\em faithful} if for every $e \in E(H)$,
there is an $e$-rung in $\eta$.

A set $A \subseteq V(G)$ is an {\em atom} of $\eta$ if one of the following holds:
\begin{itemize}
\item $A=\eta(v)$ for some $v \in V(H) $.
\item $A = \eta(D)$ for some $D \in T(H)$.
\item $A=\eta(e) \setminus (\eta(e,u) \cup \eta(e,v))$ for some
edge $e$ of $H$ with ends $u,v$.
\end{itemize}
We say that an atom is a \textit{vertex atom}, a \textit{triangle atom}, or an \textit{edge atom} depending on which of the previous conditions holds.
For an atom $A$ of $\eta$, the {\em boundary} $\delta(A)$ of $A$ is defined as follows:
\begin{itemize}
\item If $v \in V(H)$ and  $A=\eta(v)$,  then
$\delta(A)=\bigcup_{e \in E(H) \; \colon \; e \text{ is incident with } v} \eta(e,v)$.
\item If $A=\eta(D)$ , and $D \in T(H)$ with $D=v_1v_2v_3$, then
$\delta(A)=\bigcup_{i \neq j \in \{1,2,3\}}\eta(v_iv_j,v_i) \cap \eta(v_iv_j,v_j)$
\item If $A=\eta(e) \setminus (\eta(e,u) \cup \eta(e,v))$ for some
edge $e$ of $H$ with ends $u,v$, then $\delta(A)=\eta(e,u) \cup \eta(e,v)$.
\end{itemize}
A set $Z \subseteq V (G)$ is {\em constricted} if it is stable and for every $T \subseteq G$ such that $T$ is a tree,  $|Z \cap V (T )| \leq  2$.

\begin{figure}[h]
    \centering
\includegraphics[width=0.8\linewidth]{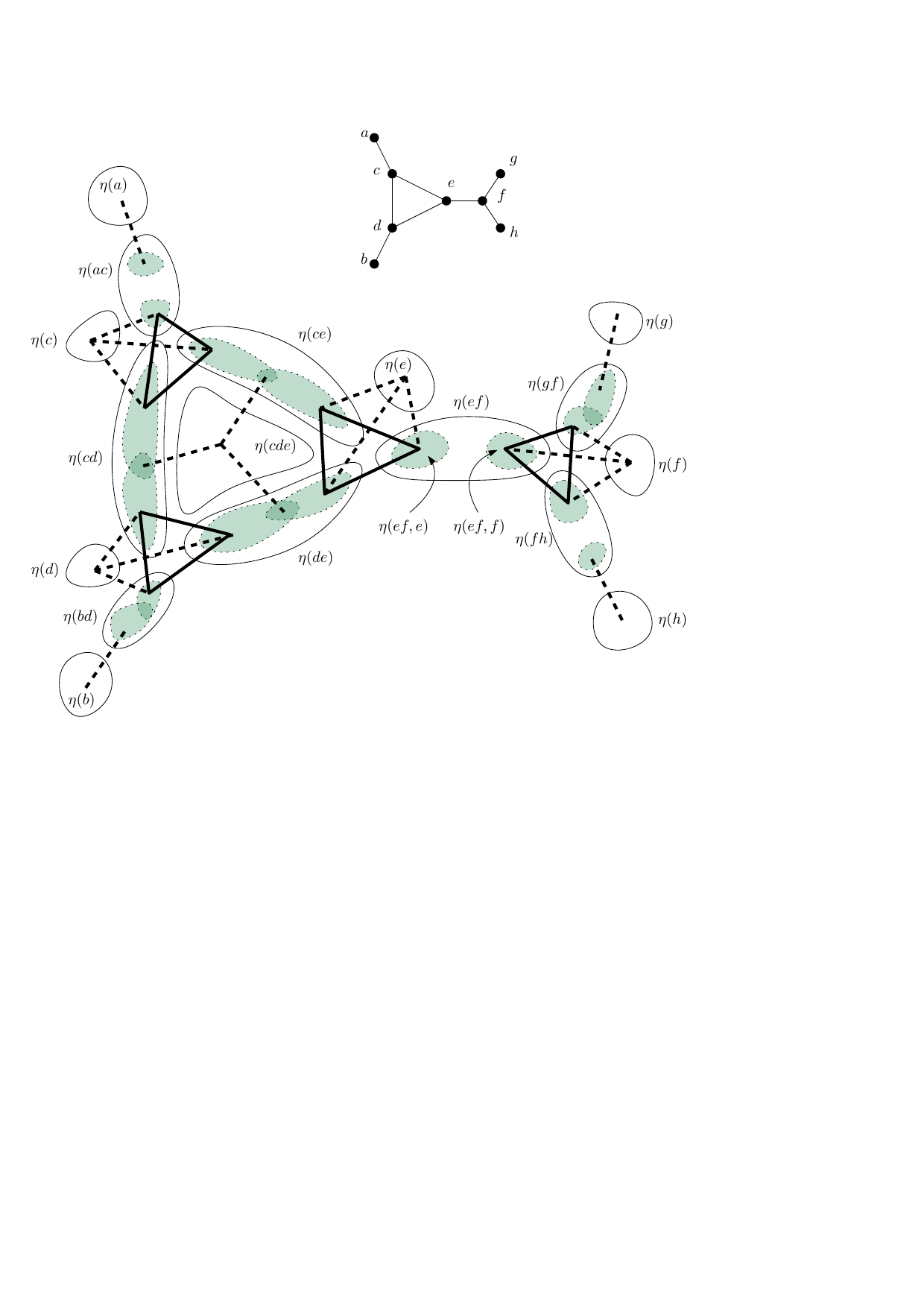}
    \caption{Example of an extended strip decomposition with its pattern (here dash lines represent potential edges).
    This figure was created by Pawe\l{} Rz\k{a}\.zewski and we use it with his permission.}
    \label{fig:ex esd}
\end{figure}

The main result of \cite{Threeinatree} is the following.
\begin{theorem}\label{stripdecomp}
Let $G$ be a connected graph and let $Z \subseteq  V (G)$ with $|Z| \geq 2$.
Then $Z$ is constricted if and only
if for some graph $H$, $(G, Z)$ admits a faithful extended strip decomposition with pattern $H$.
\end{theorem}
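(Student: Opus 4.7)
The plan is to prove the two directions separately. For the easy direction, I would assume $(G,Z)$ has a faithful extended strip decomposition $\eta$ with pattern $H$ and derive that $Z$ is constricted. Stability of $Z$ is immediate from the axioms: each $z \in Z$ is a singleton $\eta(e_z, w_z) = \{z\}$ at a leaf $w_z$ of $H$ with unique incident edge $e_z$, and distinct leaves of $H$ do not share an endpoint, so no two members of $Z$ can satisfy the cross-edge adjacency rule. For the tree constraint, suppose $T \subseteq G$ is an induced tree meeting $Z$ in three vertices $z_1, z_2, z_3$. I would project $T$ onto $H$ via $v \mapsto \eta^{-1}(v)$; the adjacency axioms make the projection into a connected subgraph of $H$ containing the three distinct leaves $w_{z_1}, w_{z_2}, w_{z_3}$. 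Picking a median vertex or edge of $H$ relative to these leaves and applying faithfulness to its incident rungs would yield two internally-disjoint paths in $T$ between the same pair of atoms, forcing a cycle in $T$ and giving the desired contradiction.

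The hard direction is the ``three-in-a-tree'' theorem of Chudnovsky and Seymour. The plan is to induct on $|V(G)|$ on a minimal counterexample $(G, Z)$. The first reductions dispose of easy cases: if $|Z| = 2$, any induced path between the two vertices serves as a single edge atom; if $G$ has a cut vertex or a $2$-vertex separation nontrivially splitting $Z$, I would decompose along it, recurse on each side, and glue the patterns together; pairs of twin vertices can be absorbed into a common atom. After these reductions, for each $z \in Z$ one defines a ``local strip'' $L_z$ consisting of the vertices that lie on every induced path from $z$ toward $Z \setminus \{z\}$. The key use of constrictedness is to force $L_z$ to be a linearly-ordered path-like block: any genuine branching inside $L_z$ would produce either an induced subtree containing three elements of $Z$, or a theta/prism whose rib paths witness the same forbidden configuration. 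Contracting each $L_z$ to a single vertex yields a smaller constricted instance $(G', Z')$; induction produces an extended strip decomposition $\eta'$ with pattern $H'$, and reinserting each $L_z$ as a new edge atom incident to a fresh leaf of $H'$ gives the required $\eta$.

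The main obstacle is the local strip analysis — specifically, proving that the far boundary of each $L_z$ lies inside a single atom of $\eta'$, so that the reinsertion creates a legitimate edge atom and not a ``spanning'' gadget that violates the cross-atom interaction rules. This is precisely where the original argument in \cite{Threeinatree} concentrates its work, through a careful case analysis on common neighborhoods of pairs in $Z$ and on small induced obstructions (short thetas, prisms, and branching subtrees) supported on triples of $Z$-vertices. Once this local step is established, faithfulness is essentially free: each strip $L_z$ itself supplies an $e_z$-rung, and rungs for the edges of $H'$ are inherited from the inductive decomposition. Verifying the remaining axioms then reduces to checking that any putative cross-atom adjacency not covered by the prescribed incidence sets would yield an induced tree on three elements of $Z$, again contradicting constrictedness.
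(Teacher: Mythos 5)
The paper does not prove Theorem~\ref{stripdecomp}: it is quoted verbatim from \cite{Threeinatree} as a black box (``The main result of \cite{Threeinatree} is the following''), so there is no in-paper proof to compare against. Your proposal is therefore an attempt to re-derive the three-in-a-tree theorem from scratch, and on that footing it has real gaps.

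For the easy direction, your stability argument skips a case: the last axiom only pins down $\eta(e_z,w_z)=\{z\}$ at the leaf end $w_z$ of $e_z$, and does not by itself forbid $z$ from also lying in $\eta(e_z,u)$ for the other end $u$, nor does it immediately rule out two members of $Z$ sitting in the same edge set $\eta(e)$; one has to chase the adjacency axioms a bit more carefully than ``distinct leaves do not share an endpoint.'' The ``project onto $H$ and pick a median vertex, then use faithfulness to force a cycle'' step is the right intuition, but as written it is not an argument: the projection $v\mapsto\eta^{-1}(v)$ need not send a tree to a tree, the relevant rungs live in $\eta$ rather than in $T$, and faithfulness guarantees existence of \emph{some} $e$-rung, not one inside your given tree $T$. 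This direction is indeed routine, but the routine is a careful case check along a path in $H$ between two leaves, not a median argument.

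For the hard direction, the ``contract each local strip $L_z$, recurse, and reinsert as a leaf edge-atom'' plan is not how the proof in \cite{Threeinatree} goes, and it would not survive contact with the details. There is no reason the far boundary of a local strip should land inside a single atom of the inductive decomposition $\eta'$ — you flag this yourself as ``the main obstacle,'' but it is not a technical nuisance to be handled later, it is exactly the content of the theorem. The actual argument of Chudnovsky and Seymour does not contract local strips; it works with a minimal counterexample, analyzes ``major'' vertices relative to a candidate strip structure, and performs a long case analysis on how thetas, pyramids, and prism-like obstructions interact with the existing strips, repairing the strip structure rather than contracting and re-expanding. Presenting the reductions (cut vertices, $2$-separations, twins) and then deferring the crux to ``this is precisely where the original argument concentrates its work'' is an accurate description of the difficulty, but it means the proposal does not constitute a proof. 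Given that this paper uses Theorem~\ref{stripdecomp} purely as an imported tool, the appropriate thing to check is the statement and citation, not to re-prove it; if you do want to re-prove it, the contraction strategy should be abandoned in favor of the structural analysis in \cite{Threeinatree}.
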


\section{Dominated Balanced Separators in $\mathcal{M}_t$} \label{sec: dbs}
\MariaIsEditing{\textcolor{red}{DO NOT MODIFY! Maria has the token!}}

We need the following results from \cite{TIV}:

\begin{lemma} \label{lem:bigatom}
There exists an integer $c$ with the following property.
Let $t \geq 2$ be an integer. Let $G$ be an $\mathcal{L}_t$-free graph, and let $w$ be a weight function on $G$. Let $D$ be a component of $G$ with $w(D)>\frac{1}{2}$.
Let $Z \subseteq D$, and let $\eta$ be a faithful extended strip decomposition of $(D,Z)$ with pattern $H$. Assume that $w(A) \leq \frac{1}{2}$
for every atom $A$ of $\eta$. 
Then there exists $Y \subseteq V(G)$
with $|Y|   \leq ct^9\log^ct$, such that $N[Y]$ is a $w$-balanced separator in $G$.
\end{lemma}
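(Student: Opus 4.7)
My plan is to reduce the problem to a balanced-separator question on the pattern graph $H$, and then lift a small separator of $H$ back to a small core in $G$. First, I would transfer weights: define $w_H$ on $V(H) \cup E(H) \cup T(H)$ by $w_H(v) = w(\eta(v))$, $w_H(D) = w(\eta(D))$, and $w_H(e) = w(\eta(e) \setminus (\eta(e,u) \cup \eta(e,v)))$ for $e = uv$. Then $\sum_x w_H(x) = w(D) > \tfrac{1}{2}$ and, by hypothesis, $w_H(x) \leq \tfrac{1}{2}$ for every element $x$ of $H$.

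Next I would use $\mathcal{L}_t$-freeness of $G$ to restrict the shape of $H$. Faithfulness of $\eta$ guarantees an $e$-rung in $\eta(e)$ for each edge $e$ of $H$, and the ESD edge-compatibility axioms force cross-atom adjacencies to go through the $\eta(e,v)$ layers at shared endpoints. Consequently, any topological minor $K$ of $H$ lifts, by stitching rungs across shared vertices, to an induced subgraph of $G$ isomorphic to the line graph of a subdivision of $K$. Taking $K = W_{t \times t}$ would witness a member of $\mathcal{L}_t$ inside $G$, which is forbidden; hence $H$ contains no subdivision of $W_{t \times t}$. By the polynomial wall/grid theorem for topological minors, $H$ has treewidth bounded by some polynomial in $t$. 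Distributing the edge and triangle weights of $w_H$ onto their incident vertices (at the cost of a factor of at most $3$), I can then extract a $w_H$-balanced separator $S \subseteq V(H)$ of size polynomial in $t$.

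The third step is to lift $S$ to a core $Y \subseteq V(G)$. The key ESD property is locality: every $G$-neighbor of a vertex in $\eta(v)$ lies in $\eta(v)$ itself or in $\eta(e,v)$ for some edge $e$ of $H$ incident to $v$, with analogous statements for edge and triangle atoms. For each $v \in S$ I would therefore choose one vertex of $\eta(v)$ (if nonempty) together with one vertex of each incident $\eta(e,v)$, contributing at most $\deg_H(v) + 1$ vertices per element of $S$. With $S$ chosen judiciously, for instance by iterating a balanced-separator argument along a hierarchy of separators and absorbing high-degree vertices into $Y$ only when unavoidable, the total should satisfy $|Y| \leq ct^9 \log^c t$.

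Finally I would verify that $N[Y]$ is a $w$-balanced separator in $G$: any component $C$ of $G \setminus N[Y]$ either avoids $D$, in which case $w(C) \leq 1 - w(D) < \tfrac{1}{2}$, or lies in $D$ and avoids every atom indexed by $S$, so $w(C) \leq \tfrac{1}{2}$ by the balanced property of $S$. The main obstacle I expect is the third step: bounding $|Y|$ by a polynomial in $t$ alone, independent of $|V(G)|$, without paying for vertices of large $H$-degree. This is the source of the $\log^c t$ overhead and will require treating the separator of $H$ more carefully than as a single tree-decomposition bag, likely through a recursive construction that balances degree contributions against separator size.
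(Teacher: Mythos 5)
The paper does not actually prove this lemma; it is imported verbatim from \cite{TIV}, so there is no in-paper proof to compare against. Evaluating your proposal on its own merits, the overall strategy (transfer weights to the pattern $H$, argue that $\mathcal{L}_t$-freeness of $G$ forbids a $W_{t\times t}$-subdivision in $H$, invoke a polynomial grid/wall theorem to get a small balanced separator of $H$, then lift it to a core in $G$) is a sensible and plausible route, but as written it has two real gaps.

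First, the claim that a $W_{t\times t}$-subdivision in $H$ ``lifts, by stitching rungs across shared vertices, to an induced subgraph of $G$ isomorphic to the line graph of a subdivision of $W_{t\times t}$'' is the crux and cannot be waved through. You need to pick one rung per edge of the subdivision and then verify, using the ESD adjacency axioms, that the union is induced-isomorphic to such a line graph: in particular that the three rung-endpoints meeting at a degree-three branch vertex of the subdivision form a triangle, that no extra adjacencies arise between interior vertices of distinct rungs, and that the degenerate case where $\eta(e,u)\cap\eta(e,v)\neq\emptyset$ (so a rung is a single vertex) still yields a legal line graph of a subdivision. This is a genuine lemma, not a consequence that falls out of the definitions without work.

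Second, and more seriously, your lift from $S\subseteq V(H)$ to $Y\subseteq V(G)$ adds ``one vertex of each incident $\eta(e,v)$, contributing at most $\deg_H(v)+1$ vertices per element of $S$.'' Since $\deg_H(v)$ is not bounded in terms of $t$, this gives no bound on $|Y|$. You note the difficulty yourself, but the proposed fix (``absorbing high-degree vertices into $Y$ only when unavoidable'') is not an argument. The missing ingredient is exactly \cref{atomboundary}: every atom's boundary admits a core of size at most $3$, so one can cover $\delta(\eta(v))$ by the closed neighborhood of three vertices, irrespective of $\deg_H(v)$. Even with that, the bookkeeping that $N[Y]$ actually disconnects $G$ along $S$ is delicate, since edge atoms and triangle atoms (and the boundary slices $\eta(e,u)$, which carry weight you did not account for in your weight transfer) must also be separated. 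Finally, the target bound $ct^9\log^c t$ is not of the form your argument would naturally deliver (a grid-theorem treewidth bound on $H$ followed by a $3\cdot|S|$ lift would give a clean polynomial in $t$), which suggests the actual argument in \cite{TIV} is organized differently, likely via a recursion that is the true source of the logarithmic factor.
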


\begin{lemma}\label{atomboundary}
Let $G,H$ be graphs, $Z \subseteq V(G)$ with $|Z| \geq 2$,
and let $\eta$ be a faithful  extended strip decomposition of  $(G,Z)$
with pattern $H$. Let $A$ be an atom of $\eta$. Then $\delta(A)$ has a core of
size at most~$3$.
\end{lemma}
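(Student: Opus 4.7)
The plan is to case-split on the type of the atom $A$ and in each case exhibit a core of size at most $2$, which is more than sufficient. The workhorse in every case is the adjacency condition defining an extended strip decomposition: whenever $e, f$ are distinct edges of $H$ sharing an endpoint $w$, every vertex of $\eta(e, w)$ is adjacent to every vertex of $\eta(f, w)$. Faithfulness will be used exactly to guarantee that $\eta(e, w)$ is non-empty for every edge $e$ of $H$ and every endpoint $w$ of $e$, since the two ends of an $e$-rung land in $\eta(e,u)$ and $\eta(e,v)$ respectively.

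First, consider a vertex atom $A = \eta(v)$, for which $\delta(A) = \bigcup_{e \text{ incident to } v} \eta(e, v)$. If $v$ has degree $0$ the boundary is empty; if $v$ has degree $1$ then $v \in W$ and the last defining property of an extended strip decomposition forces $\eta(e, v) = \{z\}$ for a single $z \in Z$, which is its own core. Otherwise I would pick two distinct incident edges $e_1, e_2$ and, using faithfulness, choose vertices $y_i \in \eta(e_i, v)$; then $y_1$ is complete to every $\eta(e, v)$ with $e \neq e_1$, and $y_2$ is complete to $\eta(e_1, v)$, so $\{y_1, y_2\}$ is the desired core.

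Next, for an edge atom $A = \eta(e) \setminus (\eta(e, u) \cup \eta(e, v))$ with $e = uv$, the boundary $\eta(e, u) \cup \eta(e, v)$ splits naturally between the two endpoints, and I would cover each side independently. On the $u$-side, either $u$ has degree $1$ and $\eta(e, u)$ is a singleton of $Z$, or $u$ has some other incident edge $f$ and any $y_u \in \eta(f, u)$ (non-empty by faithfulness) is complete to $\eta(e, u)$. Symmetric treatment of the $v$-side produces a core of size at most $2$.

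Finally, for a triangle atom $A = \eta(D)$ with $D = v_1 v_2 v_3$, set $B_{ij} = \eta(v_iv_j, v_i) \cap \eta(v_iv_j, v_j)$, so $\delta(A) = B_{12} \cup B_{13} \cup B_{23}$. The key observation is that any $y \in B_{ij}$ belongs to both $\eta(v_iv_j, v_i)$ and $\eta(v_iv_j, v_j)$, which by the adjacency condition makes $y$ complete to $\eta(v_iv_k, v_i) \supseteq B_{ik}$ and to $\eta(v_jv_k, v_j) \supseteq B_{jk}$, where $k$ is the remaining index. Thus a single vertex from each of two non-empty $B_{ij}$'s already covers all three sets; if only one $B_{ij}$ is non-empty, faithfulness supplies a vertex in one of the adjacent $\eta(\cdot, v_i)$ sets that covers it. I do not anticipate any real obstacle here: the whole argument reduces to carefully tracking when one may invoke the adjacency condition, and the only mild subtlety is the special treatment of degree-one vertices of $H$, where $\eta(e, w)$ collapses to a singleton of $Z$ rather than being exploited through the complete-bipartite adjacency structure.
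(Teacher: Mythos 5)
Your proposal is correct and in fact establishes the sharper bound of $2$ rather than $3$. The paper does not prove Lemma~\ref{atomboundary} here; it is imported from \cite{TIV}, so there is no in-text proof to compare against, but the approach you take is the natural one and the case analysis is sound. Two small points worth making explicit when writing this up formally. First, in the vertex-atom case the adjacency axiom gives that $\eta(e_1,v)$ is complete to $\eta(e_2,v)$, and faithfulness (via the existence of an $e$-rung) guarantees $\eta(e,v)\neq\emptyset$ for every edge $e$ incident with $v$, which is exactly what you use; the degree-$0$ and degree-$1$ subcases are handled correctly via the $|Z|=|W|$ axiom forcing a bijection so that every $w\in W$ has $\eta(e,w)$ a singleton of $Z$. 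Second, in the triangle case you should state the ``at least two nonempty $B_{ij}$'' argument a touch more carefully: $y_{12}\in B_{12}$ dominates $B_{13}\cup B_{23}$ (not $B_{12}$ itself), and $y_{13}\in B_{13}$ dominates $B_{12}\cup B_{23}$, so together they cover all three; the ``exactly one nonempty'' subcase is covered by a single vertex of $\eta(v_1v_3,v_1)$, which is nonempty by faithfulness since $v_1$ lies in a triangle and hence has degree at least $2$. None of these are gaps, just places to spell out which instance of the adjacency axiom is invoked.
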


We also need the following, which is  an immediate corollary of Lemma 6.8 of \cite{QPTAS}:
\begin{lemma} \label{threepaths}
Let $G,H$ be graphs, $Z \subseteq V(G)$ with $|Z| \geq 3$,
and let $\eta$ be an  extended strip decomposition of $(G,Z)$
with pattern $H$. Let $Q_1,Q_2,Q_3$ be paths in $G$,
pairwise anticomplete to each other, and each with an end in $Z$.
Then for every atom $A$ of $\eta$, at least one of the sets
$N[A] \cap Q_1$, $N[A] \cap Q_2$ and $N[A] \cap Q_3$ is empty.
\end{lemma}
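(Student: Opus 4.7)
The plan is to argue by contradiction. Suppose that for some atom $A$ of $\eta$ the set $N[A]$ meets each of $Q_1, Q_2, Q_3$. For each $i$, pick $x_i \in N[A] \cap Q_i$ and $z_i \in Z \cap Q_i$, and let $Q_i^{*}$ be the subpath of $Q_i$ from $x_i$ to $z_i$. The paths $Q_1^{*}, Q_2^{*}, Q_3^{*}$ remain pairwise anticomplete, and since $z_1, z_2, z_3$ are pairwise distinct elements of $Z$ they correspond, via the bijection in the definition of extended strip decomposition, to three distinct degree-one vertices $w_1, w_2, w_3$ of the pattern $H$.

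The next step is to exploit the rigid neighborhood structure of $A$ in $\eta$. Each $x_i$ lies either in $A$ or in $\delta(A)$, and $\delta(A)$ decomposes into ``sides'': two in the edge-atom case ($\eta(e,u)$ and $\eta(e,v)$), one per edge incident with $v$ in the vertex-atom case $A = \eta(v)$, and one per edge of the triangle in the triangle-atom case. Condition (3) in the definition of an extended strip decomposition forces a complete bipartite adjacency between $\eta(f,v)$ and $\eta(f',v)$ whenever $f, f'$ are distinct edges of $H$ sharing the endpoint $v$, and an analogous adjacency holds for the sides of a triangle atom. Consequently, if two of the $x_i$'s lay in distinct sides of $\delta(A)$, they would be adjacent in $G$, contradicting the anticompleteness of the $Q_i^{*}$. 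This forces all boundary $x_i$'s into a single common side, and by concatenating the $Q_i^{*}$ through $A$ (using the connectivity of an atom within its local structure in $H$) one obtains three pairwise vertex-disjoint and pairwise non-adjacent paths from a common origin to three distinct leaves $w_1, w_2, w_3$ of $H$.

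This configuration is precisely what is ruled out by Lemma~6.8 of \cite{QPTAS}: in an extended strip decomposition the image of a single atom in $H$ is localized at one vertex, one edge, or one triangle, and three pairwise anticomplete paths from that atom to three distinct elements of $Z$ would have to exit through locally incompatible positions of the pattern. The hard part is the notationally dense but otherwise routine case analysis on the type of $A$, in particular verifying the ``single-side'' reduction for vertex and triangle atoms where $\delta(A)$ has many sides; once that reduction is in place, the claim follows immediately from the cited lemma of \cite{QPTAS}.
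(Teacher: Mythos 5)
You and the paper both ultimately invoke Lemma~6.8 of \cite{QPTAS}, but the paper treats Lemma~\ref{threepaths} as an \emph{immediate} corollary of that result, whereas your proposal inserts an intermediate geometric reduction --- and it is that reduction that breaks.

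The central flaw is the ``single-side reduction.'' You claim that if two of the $x_i$ sit in distinct sides of $\delta(A)$ they must be adjacent. That is correct for a vertex atom $A=\eta(v)$, where condition~(3) of the definition makes $\eta(e,v)$ complete to $\eta(f,v)$ for distinct edges $e,f$ incident with $v$, and likewise for a triangle atom, whose three sides $\eta(v_iv_j,v_i)\cap\eta(v_iv_j,v_j)$ are pairwise complete by the same condition. But for an edge atom $A=\eta(e)\setminus(\eta(e,u)\cup\eta(e,v))$, the two sides $\eta(e,u)$ and $\eta(e,v)$ are both contained in the single set $\eta(e)$, and \emph{inside} one $\eta(e)$ the axioms of an extended strip decomposition impose no adjacency constraints whatsoever: condition~(3) governs only edges between $\eta(e)$ and $\eta(f)$ for $e\neq f$. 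So two of the $x_i$ can lie in different sides of an edge atom without being adjacent, and the single-side conclusion fails in exactly the case your ``routine case analysis for vertex and triangle atoms'' quietly omits.

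There are further gaps. You never treat the possibility that some $x_i$ lies in $A$ itself rather than in $\delta(A)$. The step ``concatenating the $Q_i^{*}$ through $A$ \ldots yields three pairwise vertex-disjoint and pairwise non-adjacent paths from a common origin'' cannot be right as stated: paths routed through the same atom would generically share vertices of $A$ (so could be neither vertex-disjoint nor anticomplete), atoms are not required to be connected, and a ``common origin'' is itself a shared vertex. Finally, the content of Lemma~6.8 of \cite{QPTAS} and the reason it forbids the configuration you describe are left as gestures. The paper's treatment avoids all of this by applying Lemma~6.8 directly to the atom $A$ and the triple $Q_1,Q_2,Q_3$; the reduction you interpose is unnecessary, and in the edge-atom case it is incorrect.
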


Finally, we need the following result from \cite{prismfree}.
\begin{lemma}\label{lem: minimalconnected}
Let $x_1, x_2, x_3$ be three distinct vertices of a graph $G$. Assume that $H$ is a connected induced subgraph of $G \setminus \{x_1, x_2, x_3\}$ such that $V(H)$ contains at least one neighbor of each of $x_1$, $x_2$, $x_3$, and that $V(H)$ is minimal subject to inclusion. Then, one of the following holds:
\begin{enumerate}[(i)]
\item For some distinct $i,j,k \in  \{1,2,3\}$, there exists $P$ that is either a path from $x_i$ to $x_j$ or a hole containing the edge $x_ix_j$ such that
\begin{itemize}
\item $V(H) = V(P) \setminus \{x_i,x_j\}$; and
\item either $x_k$ has two non-adjacent neighbors in $H$ or $x_k$ has exactly two neighbors in $H$ and its neighbors in $H$ are adjacent.
\end{itemize}

\item There exists a vertex $a \in V(H)$ and three paths $P_1, P_2, P_3$, where $P_i$ is from $a$ to $x_i$, such that 
\begin{itemize}
\item $V(H) = (V(P_1) \cup V(P_2) \cup V(P_3)) \setminus \{x_1, x_2, x_3\}$;  
\item the sets $V(P_1) \setminus \{a\}$, $V(P_2) \setminus \{a\}$ and $V(P_3) \setminus \{a\}$ are pairwise disjoint; and
\item for distinct $i,j \in \{1,2,3\}$, there are no edges between $V(P_i) \setminus \{a\}$ and $V(P_j) \setminus \{a\}$, except possibly $x_ix_j$.
\end{itemize}

\item There exists a triangle $a_1a_2a_3$ in $H$ and three paths $P_1, P_2, P_3$, where $P_i$ is from $a_i$ to $x_i$, such that
\begin{itemize}
\item $V(H) = (V(P_1) \cup V(P_2) \cup V(P_3)) \setminus \{x_1, x_2, x_3\} $; 
\item the sets $V(P_1)$, $V(P_2)$ and $V(P_3)$ are pairwise disjoint; and
\item for distinct $i,j \in \{1,2,3\}$, there are no edges between $V(P_i)$ and $V(P_j)$, except $a_ia_j$ and possibly $x_ix_j$.
\end{itemize}
\end{enumerate}
\end{lemma}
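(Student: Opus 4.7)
The plan is to analyze $H$ by first exhibiting a convenient spanning subtree--a minimum Steiner tree $T^*$ for the neighborhoods of the $x_i$'s--and then constraining the remaining edges of $H$ using the inclusion-minimality of $V(H)$. Set $A_i := V(H) \cap N(x_i)$, which is non-empty for each $i$ by hypothesis, and pick $a_i \in A_i$. Let $T^*$ be a minimum-vertex Steiner tree in $H$ for the terminal set $\{a_1, a_2, a_3\}$. A standard observation shows that such a tree has one of only two shapes: a path (passing through one terminal as an internal vertex), or a ``tripod'' that is a subdivision of $K_{1,3}$ with a single branching vertex $s$. Since $V(T^*)$ is a connected subset of $V(H)$ that meets every $A_i$, the minimality of $V(H)$ forces $V(T^*) = V(H)$.

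The next step is to restrict which chords $uv\in E(H)\setminus E(T^*)$ can appear. For any such chord let $Q$ denote the unique $T^*$-path from $u$ to $v$. If no interior vertex of $Q$ lies in any $A_k$, then $V(H)\setminus Q^*$ is still connected through the chord $uv$ and still meets every $A_i$, contradicting the minimality of $V(H)$. Hence every chord of $T^*$ must be ``blocked'' by some $A_k$-vertex in its tree-interior, which drastically limits the possible chords. The same style of argument bounds the attachments of each $x_k$ and forces them to live along specific portions of $T^*$.

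I would then split into cases according to the shape of $T^*$. If $T^*$ is a path $p_1\dd\cdots\dd p_m$ with no chord in $H$, then either some $A_k$ has a unique representative $p_\ell$ along the path and the structure collapses to three internally disjoint paths meeting at $p_\ell$, giving case (ii); or every $A_k$ has at least two vertices in $\{p_2,\dots,p_{m-1}\}$, and a short check shows one of the two neighborhood patterns of case (i) occurs. If $T^*$ is a path but carries a chord, the ``blocked-by-$A_k$'' analysis forces the chord to be of the form $p_a p_{a+2}$, producing a triangle $\{p_a,p_{a+1},p_{a+2}\}$ with $|A_k|=1$ concentrated on $p_{a+1}$; reading off the three paths from this triangle to the $x_i$'s along $T^*$ gives case (iii). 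Finally, if $T^*$ is a tripod with Steiner point $s$, an exchange argument--swapping any internal-arm chord for a tree edge yields a strictly smaller Steiner tree, contradicting the choice of $T^*$--shows that cross-arm chords can only appear as a triangle on the three neighbors of $s$; this triangle makes $s$ redundant (by the minimality of $V(H)$) and yields case (iii), while the chordless sub-case is case (ii) with apex $a=s$.

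The main obstacle, I expect, will be the chord analysis--especially in the tripod case--where one must rule out all ``spurious'' chords by exchange arguments on $T^*$ and show that the only possible chord configurations are precisely those appearing in cases (i)--(iii). Once the chord positions have been pinned down, verifying the precise neighborhood conditions on $x_k$ in case (i) and the ``no cross edges except $a_ia_j$ and $x_ix_j$'' conditions in cases (ii) and (iii) reduces to one final application of the minimality of $V(H)$.
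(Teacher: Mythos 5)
The paper does not prove this lemma; it cites it directly from \cite{prismfree}, so your attempt is a reconstruction of that external argument rather than a reproduction of anything in this paper. Your Steiner-tree framework is sound: picking $a_i\in N(x_i)\cap V(H)$, taking a minimum-vertex Steiner tree $T^*$ of $H$ for $\{a_1,a_2,a_3\}$, and invoking the inclusion-minimality of $V(H)$ to force $V(T^*)=V(H)$ is correct, as is the path/tripod dichotomy for $T^*$.

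However, the tripod branch of your chord analysis contains a genuine error. First, the ``blocked by $A_k$'' argument does not apply to a cross-arm chord $uv$: deleting the interior of the $T^*$-path from $u$ to $v$ (which passes through the Steiner point $s$) also severs the third arm, so $uv$ alone does not restore connectivity and minimality gives you nothing. Second, your conclusion that cross-arm chords ``can only appear as a triangle on the three neighbors of $s$'' is essentially the opposite of what happens. If $q_1,w_1,z_1$ (the arm vertices adjacent to $s$) carried all three cross-arm chords, then $V(H)\setminus\{s\}$ would be connected and would still meet every $A_i$, so minimality of $V(H)$ would force $A_i=\{s\}$ for some $i$; but then a Steiner tree for $\{a_1,a_2,a_3\}$ with $a_i=s$ need not use any of the third arm, contradicting minimality of $T^*$ (equivalently, $T^*$ would degenerate to a path). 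The same reasoning excludes even two cross-arm chords. What \emph{can} occur is exactly one cross-arm chord, say $q_1w_1$, and this is precisely what produces outcome (iii): the triangle is $\{s,q_1,w_1\}$ --- not $\{q_1,w_1,z_1\}$ --- with $P_3$ running from $s$ along the third arm. So the correct split in the tripod case is: no cross-arm chord gives outcome (ii) with apex $s$; exactly one cross-arm chord gives outcome (iii) with a triangle through $s$. Correcting that branch, and handling the degenerate cases where two of the $a_i$ coincide (which your sketch silently assumes away), should yield a complete proof.
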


We are now ready to prove Theorem~\ref{thm:domsep}.
\begin{proof}
We may assume that $t \geq 2$.
Let $G \in \mathcal{M}_t$ and let $w$ be a weight function on $G$.
By working with the normalized function of $w$, we may assume that $w$ is normal.
Let $c$ be as in Lemma~\ref{lem:bigatom}.
Let $d=ct^9\log^{c}t+100$. 
We will show that there is a set $Y \subseteq G$ with $|Y|<d$ such that
$N[Y]$ is a $(w,\frac{1}{2})$-balanced separator in $G$. Suppose no such $Y$
exists.

By the proof of Lemma~5.3 of \cite{QPTAS}, there is a path $P$ in $G$ such that $N[P]$ is a $w$-balanced separator in $G$. Let $P=p_1 \dd \dots \dd p_k$, and assume that $P$ was chosen with $k$ minimum. It follows that there exists a component $B$ of
$G \setminus N[P \setminus \{p_k\}]$ such that $w(B) > \frac{1}{2}$. Let
$N=N(B)$. Then $N \subseteq N(P \setminus \{p_k\})$.

\sta{There is no $Y \subseteq G$ with  $|Y|<d$ such that $N \cup N[p_k]  \subseteq N[Y]$. \label{goodY}}

Suppose such $Y$ exists. We will show that $N[Y]$ is a $w$-balanced separator in $G$. We may assume that there is a component  $D$ of  $G \setminus N[Y]$ with
$w(D)>\frac{1}{2}$. Since $w(B)>\frac{1}{2}$, we deduce that
$D \cap B \neq \emptyset$.  Since $N \subseteq N[Y]$, it follows that
$D \subseteq B$, and so $D \cap N[P] \subseteq N[p_k]$. Since
$N[p_k] \subseteq N[Y]$, we deduce that $D$ is contained in a component of $G \setminus N[P]$, and therefore $w(D) < \frac{1}{2}$,
a contradiction. This proves that  $N[Y]$ is a $w$-balanced separator in $G$,
contrary to our assumption, and \eqref{goodY} follows.
\\
\\
For every vertex $n \in N$ let $l(n)$ be the minimum $i \in \{1, \dots, k-1\}$ such that
$n$ is adjacent to $p_i$ and let $r(n)$ be the maximum
$i \in \{1, \dots, k-1\}$ such that
$n$ is adjacent to $p_i$. Let $I(n)=l(n) \dd P \dd r(n)$.
Let $H$ be the graph with vertex set $N$ and such that $n_1n_2 \in E(H)$
if and only if $I(n_1) \cap I(n_2) \neq \emptyset$.
Let $N_0$ be a stable set of size $\alpha(H)$ in $H$. Write $N_0=\{k_1, \dots, k_m\}$
where $r(k_i) < l(k_{i+1})$ for every $i \in \{1, \dots, m-1\}$. 
Observe that  $H$ is an interval graph, and therefore the complement of $H$ is perfect \cite{intervalgraphs}. It follows that there exists a partition
$K_1, \dots, K_m$ of $V(H)$ such that for every $i$, $K_i$ is a clique of $H$ and $k_i \in K_i$. Since $K_i$ is a clique of $H$, it follows from the Helly property of the line that there exists $j(i) \in \{1, \dots m\}$ such that
$p_{j(i)} \in I(n)$ for every $n \in K_i$.

\sta{Let $N' \subseteq N$ be such that the sets
$I(n_1) \cap I(n_2) \neq \emptyset$ for all $n_1,n_2 \in N'$. Then there exists $X' \subseteq P$  with $|X'| \leq 3$ and $n' \in N'$ such that
$N' \subseteq N[X' \cup \{n'\}]$.
\label{dominateKi}}

    \begin{figure}
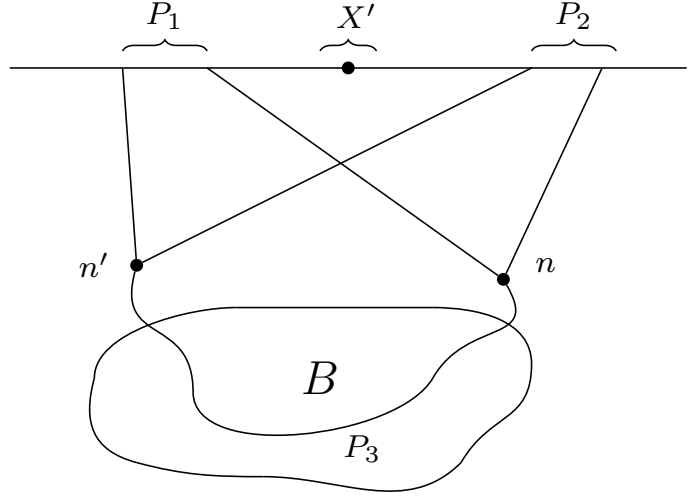

        \centering
        \scalebox{1.5}{\tikzfig{dbs-2}}
        \caption{Visualisation for \eqref{dominateKi}}
        \label{fig:dominateKi}
    \end{figure}

It follows from the Helly property of the line that there exists
$p_j \in \bigcap_{n \in N'}I(n)$.
Let $X'=\{p_s \; : \; s \in \{j-1,j,j+1\}  \cap \{1, \dots, k-1\}\}$.
If $N' \in N[X']$,
then \eqref{dominateKi} holds, setting $n'$ to be an arbitrary element of $N'$.
Thus, we may assume that there exists $n' \in N'$ such that $n'$ is anticomplete to $X'$. It is now enough to show that every $n \in N' \setminus (N[X']\cup \set{n'})$
is adjacent to $n'$. Suppose not, and let $n \in N' \setminus N[X' \cup \{n'\}]$. Since $p_j \in I(n) \cap I(n')$ it follows that both
$n'$ and $n$ have neighbors in $p_1 \dd P \dd p_{j-2}$ and in
$p_{j+2} \dd P \dd p_{k-1}$. It follows that there is a path $P_1$
from $n'$ to $n$ with interior in $p_1 \dd P \dd p_{j-2}$
and  a path $P_2$
from $n'$ to $n$ with interior in $p_{j+2} \dd P \dd p_{k-1}$.
Since $n',n \in N$, there is a path from $n'$ to $n$ with
interior in $B$. But now $P_1 \cup P_2 \cup P_3$ is a theta with ends
$n,n'$, a contradiction. (See \cref{fig:dominateKi}) This proves~\eqref{dominateKi}.
\\
\\
In the remainder of the proof, we will consider, for each $i \in \{1, \dots, m\}$, and
``interval'' of $N_0$ starting at $i$, but we will need to choose its end in a particular way. We will explain this next. Let $i \in \{2, \dots, m\}$
and let $J(i)=p_{l(k_i)-1} \dd P \dd p_{r(k_i)+1}$. Let $P_L(i)$ be the component of $P \setminus J(i)^*$ containing $p_1$, and let $P_R(i)$ be the component of
$P \setminus J(i)^*$ containing $p_k$.  

The following is immediate from \eqref{dominateKi}:
\\
\\
\sta{There exists $k_i' \in K_i$ such that every $k \in K_i$ has a neighbor in $J(i) \cup \{k_i'\}$. \label{KiJ(i)}}
\\
\\
\sta{There exists
$Y_i' \subseteq P \cup N$ with $|Y_i'| \leq 18$
with
$p_{l(k_i)-1}, p_{l(k_i)}, p_{l(k_i)+1}, p_{r(k_i)-1}, p_{r(k_i)}, p_{r(k_i)+1} \in  Y_i'$
such that
$N(J(i)) \cap N \subseteq N[Y_i']$. \label{dominateN(J(i))'}}

Let $Z_1=\{p_{l(k_i)-1}, p_{l(k_i)}, p_{l(k_i)+1}\}$ and
let $Z_2=\{p_{r(k_i)-1}, p_{r(k_i)}, p_{r(k_i)+1}\}$.
Let $N_1$ be the set of vertices $n \in N$ with $I(n) \subseteq I(k_i)$,
$N_2$ the set of vertices in $n \in N$ such that $p_{l(k_i)-1} \in I(n)$
and $N_3$ the set of vertices in $n \in N$ such that $p_{r(k_i)+1} \in I(n)$.
Then $N(J(i)) \cap N =N_1 \cup N_2 \cup N_3$.
By the maximality of $N_0$ it follows that the sets $I(n)$  pairwise meet
for all $n \in N_1$. Now 
\eqref{dominateKi} implies that for every $i \in \{1,2,3\}$ there exist
$n_i' \in N_i$ and
$X_i' \subseteq P$ with $|X_i'| \leq 3$ such that $N_i \subseteq N(X_i' \cup n_i')$. 
Let $$Y_i'=X_1' \cup X_2' \cup X_3' \cup \{n_1',n_2',n_3'\} \cup Z_1 \cup Z_2.$$
Now  $N(J_i) \cap N \subseteq  N[Y_i']$.
This proves \eqref{dominateN(J(i))'}.
\\
\\
Let $Y_i'$ be as in \eqref{dominateN(J(i))'}.

\sta{There exists $X_i' \subseteq P \cap N(k_i)$ with $|X_i'| \leq 4$ with the following proprety. For every path $Q=q_1 \dd \dots \dd q_s$ in 
$G \setminus N[Y_i']$ where $q_1$ has a neighbor in $J(i)$
and $q_s$ has a neighbor in $B$, we have that $Q$ meets $N[X_i' \cup \{k_i\}]$.  \label{dominateN(J(i))}}

We may assume that $Q \setminus q_s$ is anticomplete to $B$, and that 
 $Q  \setminus q_1$ is anticomplete to $J(i)$. Since $q_1 \not \in N[Y_i']$, we deduce that $s>1$. If $|N(k_i) \cap V(P)| \leq 4$, let $X_i'=N(k_i) \cap P$. If $|N(k_i) \cap V(P)| > 4$, let $J$ be the set of consisitng of the
two minimum values of $j$, and the two maximum values of $j$ such that
$p_j \in N(k_i) \cap J(i)$, and let $X_i'=\{p_j \text{ : }j \in J\}$.
Assume that $Q \cap N[X_i' \cup \{k_i\}]=\emptyset$.
It follows that $k_i$ is anticomplete to $Q$.
Let $R$ be a path from $k_i$ to $q_s$ with interior in $B$.

Suppose first that $q_1$ has a neighbor $v$ in $J(i) \setminus N(k_i)$. Since
$q_1 \not \in N[Y_i']$, it follows that
there exists a subpath $P'=p_j \dd P \dd p_l$ of $J(i)$ with $j<l$ such that
$p_j,p_l$ are adjacent to $k_i$, $k_i$ is anticomplete to $\{p_{j+1}, \dots, p_{l-1}\}$ and $v \in \{p_{j+1}, \dots, p_{l-1}\}$ (see \cref{fig:dominateN(J(i))}).
    \begin{figure}[h]


        \centering
        \scalebox{1.3}{\tikzfig{dbs-5}}
        \caption{Visualisation for \eqref{dominateN(J(i))}}
        \label{fig:dominateN(J(i))}
    \end{figure}
    
If $q_1$ has two non-adjacent neighbors in
$P'$, then $P' \cup Q \cup R$ contains a theta with ends $q_1,k_i$; if $v$ is the unique neighbor of $q_1$ is $P'$, then $P' \cup Q \cup R$ is a theta with ends $v,k_i$;
and if $q_1$ has exactly two neighbors $u,v$ in $P'$ and $u$ is adjacent to $v$, then $P' \cup Q \cup R$ is a pyramid with apex $k_i$ and base $uvq_1$. This proves that $N(q_1) \cap J(i) \subseteq N(k_i) \cap J(i)$.

Since $q_s$ has a neighbor in $B$ and $q_{s-1}$ is anticomplete to $B$, it follows that $q_s \in N$, and therefore  $q_s$ has a neighbor in $P$. Since $s>1$, $q_s$ is anticomplete to $J(i)$ and has a neighbor in $P \setminus J(i)$. Assume that $q_s$ has a neighbor in $P_R(i)$  (the argument for the other case is analogous). Let $l>r(k_i)$ be minimum such that $p_l$ has a neighbor in $Q$.
Let $j<r(k_i)$ be maximum such that $q_1$ is adjacent to $p_j$. Then
$j \in N(k_i) \cap J(i)$. Let $S=k_i \dd p_{r(k_i)} \dd P \dd p_l$. Since
$p_j \not \in X_i'$, it follows that $p_j$ is anticomplete to $S \setminus k_i$.Now if $p_l$ has two non-adjacent neighbors in $Q \cup R$, then
$Q \cup R \cup S$ contains a theta with ends $p_l, k_i$;
if $p_l$ has a unique neighbor $v$ in $Q \cup R$, then $Q \cup R \cup S$ is a theta with ends $v, k_i$; and if $p_l$ has exactly two neighbors $u,v \in Q \cup R$ and $u$ is adjacent to $v$, then $Q \cup R \cup S$ is a pyramid with apex
$k_i$ and base $p_luv$, in all cases a contradiction.
This proves \eqref{dominateN(J(i))}.
\\
\\
Let $X_i'$ be as in \eqref{dominateN(J(i))} and let $Y_i=Y_i' \cup X_i' \cup \{k_i, k_i'\}$.
Then $k_i, k_i', p_{l(k_i)-1}, p_{l(k_i)}, p_{r(k_i)}, p_{r(k_i)+1} \in  Y_i$.
Let $Z_i=(N[Y_i] \cap N(P))  \cup J(i)^*$.
Let $U_i=(V(G) \setminus Z_i)   \cup
\{k_i,k_1, p_{l(k_i)-1}, p_{r(k_i)+1}\}$.
Then $B \subseteq U_i$. 
Let $D_i$ be the component of $U_i$ containing
$B$.  Then $k_1, p_k \in D_i$, and therefore $P \setminus J(i)^* \subseteq D_i$.
Also, $k_i \in D_i$.
Write $(z_1,z_2,z_3)=(p_{l(k_i)-1}, k_i,p_{r(k_i)+1})$.
Let $G_i'=G[D_i]$.

From now on, we assume that $i>1$.
\\
\\
\sta{One of the following holds.
\begin{enumerate}[(i)]
\item  There is a component $D_i'$ of $G_i' \setminus N[z_2]$ 
with $z_1,z_3 \in D_i'$, and  $n_i \in N(z_2) \cap N(D_i')$ 
such that the set $\{z_1,z_2,z_3\}$ is constricted in the graph
$G_i=G[D_i' \cup \{n_i,z_2\}]$ \label{claw}, or
\item There is a path $T_i$ from $z_1$ to $z_3$ in $G_i'$ such that
either $z_2$ has a unique neighbor in $T_i$, or $z_i$ has at least
two non-adjacent neighbors in $T_i$ \label{path}, or
\item no component of $G_i' \setminus N[z_2]$ contains both $z_1$ and $z_3$ \label{path2}. 
\end{enumerate}
\label{constricted}}

We may assume that there is  a component 
$D_i'$  of $G_i' \setminus N[z_2]$ such that $z_1,z_3 \in D_i'$. Then
there is a neighbor $n_i$ of $z_2$ such that $n_i$ has a neighbor in
$D_i'$. Let $G_i=G[D_i' \cup \{n_i,z_2\}]$. We may also assume that
no path $T_i$ as in outcome \ref{path} exists.

Suppose that  that $\{z_1,z_2,z_3\}$ is not
constricted in $G_i$.
Then there is a tree $T$ containing $z_1,z_2,z_3$.
Since $z_1$, $z_2$, $z_3$
 have degree one in $G_i$, it follows that 
$T$ is a subdivision of $K_{1,3}$ and $z_1,z_2,z_3$ are the leaves of $T$.
Let $t$ be the unique vertex of $T$ of
degree three, and for $j \in \{1,2,3\}$ let $P_j$ be the path of $T$ from
$t$ to $z_j$. Since $P_1 \cup P_3$ does not satisfy outcome \ref{path},  it
follows that   $t$ is non-adjacent to $z_2$.
By \eqref{dominateN(J(i))}, $D_i \setminus \{z_1,z_2,z_3\}$ is
anticomplete to $J(i)^*$ in $G$, and in particular
$T \setminus \{z_1,z_2, z_3\}$ is
anticomplete to $J(i)^*$ in $G$.  Now, if $l(z_2)=r(z_2)$,
we get that $T \cup J(i)^*$ is a theta with ends $t,p_{l(z_2)}$;
if $r(z_2)=l(z_2)+1$, we get that $T \cup J(i)^*$ is a pyramid with base $z_2p_{l(z_2)}p_{r(z_2)}$ and apex $t$; and if $r(z_2) \geq l(z_2)+2$, we get
that $T \cup \{p_{l(z_2)}, p_{r(z_2)}\}$    is  theta with ends $t,z_2$; in all cases, a contradiction. This proves \eqref{constricted}.
\\
\\
Our next goal is to define a special connected set $A_i$.
\\
\\
\sta{Suppose that outcome \ref{path} or outcome \ref{path2}  of  \eqref{constricted} holds.
Then there exists $\Delta_i\subseteq G \setminus N[P]$ with $|\Delta_i| \leq 2$ 
and a component $A_i$ of $G_i' \setminus N[\{k_i\} \cup \Delta_i]$  such that 
\begin{enumerate}
    \item $w(A_i) > \frac{1}{2}$, and
    \item at  least one of the sets $P_L(i) \cap N_G[A_i]$ and $P_R(i) \cap N_G[A_i]$ is empty.
\end{enumerate}
\label{wheel}}

Suppose first that outcome \ref{path2} of \eqref{constricted} holds, and so
no component of $G_i' \setminus N[z_2]$ contains both
$z_1$ and $z_3$. Let $\Delta_i=\emptyset$.
Since $(N[Y_i] \cap N(P)) \cup N[\{z_2\}]$ is not a balanced separator in
$G$, there is a component $A_i$ of  $G_i' \setminus N[z_2]$ with
$w(A_i) > \frac{1}{2}$. Then $|A_i \cap \{z_1,z_3\}| \leq 1$.
Since $(N[Y_i] \cap N(P)) \cup N[\{z_2\}]$ is disjoint from $P_L(i) \cup P_R(i)$, it follows that one of the sets $N[A_i] \cap P_L(i)$ and
$N[A_i] \cap P_R(i)$ is empty, as required.

Thus, we may assume that outcome \ref{path} of \eqref{constricted} holds.
Let $T_i=t_1 \dd \dots \dd t_m$ be a path as in outcome
\ref{path} of \eqref{constricted} where $t_1=z_1$ and $t_m=z_3$. Let $q$ be minimum and $r$ be maximum such that $z_2$ is adjacent to $t_q,t_r$. Then $r \neq q+1$. Let
$\Delta_i=\{t_q,t_r\}$.
Since $k_i \in Y_i$, it follows that $\Delta_i \subseteq G \setminus N[P]$.
Since 
$(N[Y_i] \cap N(P))  \cup N[\{k_i,t_q,t_r\}]$ is not a $w$-balanced separator in $G$, it follows that there is a component 
$A_i$ of
$G_i' \setminus N[\{k_i, t_q,t_r\}]$ with
$w(A_i)>\frac{1}{2}$.

Suppose $P_L(i) \cap N[A_i] \neq \emptyset$ and
$P_R(i) \cap N[A_i] \neq \emptyset$.
Since $k_i \in Y_i$, it follows that $t_q,t_r \not \in N(P)$, and
consequently $N[\{k_i,t_q,t_r\}] \cap (P_L(i) \cup P_R(i))=\emptyset$.
We deduce that 
$N_G(A_i) \cap (P_L(i) \cup P_R(i))=\emptyset$; therefore 
$A_i \cap P_L(i) \neq \emptyset$ and $A_i \cap P_R(i) \neq \emptyset$,
and so $P_L(i) \cup P_R(i) \subseteq A_i$.
Consequently, there is a path
$R$ from $z_1$ to $z_3$ with
$R^* \subseteq A_i$.
Recall that  $R^* \cap N[\{k_i,  t_q,t_r \}]=\emptyset$,
and $R^* \cap N[Y_i] \cap N(P)=\emptyset$.
Let $R'$ be a minimal subpath of $R$ from a vertex $r$ with a
neighbor in $t_1 \dd T_i \dd t_{q-1}$ to a vertex $r'$ with a neighbor in
$t_{r+1}  \dd T_i \dd t_m$.
Since $z_1$ has a unique neighbor in $G_i'$, it follows that
$r$ is non-adjacent to $z_1$, and similarly $r'$ is non-adjacent to $z_3$.
Since $J(i) \cup T_i \cup \{k_i\}$ is not a theta or a pyramid,
we deduce that either $p_r(k_i)>p_l(k_i)+1$, or $r>q+1$.
Now exactly of the following holds.
\begin{enumerate}[(i)]
\item There is a path $S$ from $k_i$ to $r'$ with
$S^* \subseteq t_r \dd T_i \dd t_m$ and such that $S$ is anticomplete to
$t_1 \dd T_i \dd t_q$.
\item $r=q$, and $t_{q+1}$ is the unique neighbor of $r'$ in $t_{q+1} \dd T_i \dd t_m$.
\end{enumerate}
Also, switching the roles of $z_1$ and $z_3$, exactly one of the following holds:
\begin{enumerate}[(i)]
\item There is a path $S'$ from $k_i$ to $r$ with
$S^* \subseteq t_1 \dd T_i \dd t_q$ and such that $S'$ is anticomplete to
$t_r \dd T_i \dd t_m$.
\item $r=q$, and $t_{q-1}$ is the unique neighbor of $r$ in $t_1 \dd T_i \dd t_{q-1}$.
\end{enumerate}
We claim that  at least one of the following holds:
\begin{enumerate}[(i)]
\item There is a path $S$ from $k_i$ to $r'$ with
$S^* \subseteq t_r \dd T_i \dd t_m$ and such that $S$ is anticomplete to
$t_1 \dd T_i \dd t_q$.
\item There is a path $S'$ from $k_i$ to $r$ with
$S^* \subseteq t_1 \dd T_i \dd t_q$ and such that $S'$ is anticomplete to
$t_r \dd T_i \dd t_m$.
\end{enumerate}
Suppose not. Then $q=r$, $t_{q-1}$ is the unique neighbor of $r$ in $t_1 \dd T_i \dd t_{q-1}$, and  $t_{q+1}$ is the unique neighbor of $r'$ in $t_{q+1} \dd T_i \dd t_m$. From the minimality of $R'$ we have that $R' \setminus \{r,r'\}$ is
anticomplete to $T_i$. By  \eqref{dominateN(J(i))}, $G_i' \setminus \{z_1,z_2,z_3\}$ is anticomplete to $J(i)^*$. But now
$J(i) \cup T_i \cup R'$ is a theta with ends $t_{q-1},t_{q+1}$, a contradiction.
This proves the claim.

Exchanging the roles of $z_1,z_3$ if necessary, we may assume that
\begin{itemize}
\item There is a path $S$ from $k_i$ to $r'$ with
$S^* \subseteq t_r \dd T_i \dd t_m$ and such that $S$ is anticomplete to
$t_1 \dd T_i \dd t_q$.
\end{itemize}

Let $1 \leq x \leq y \leq q-1$  such that $x$ is minimum and $y$ is maximum
with $t_x,t_y$ adjacent to $r$.
Recall that $r$ is non-adjacent to $p_l(k_i)$, and $r$ is non-adjacent to
$t_q$. Now if $x=y$, then
$t_1 \dd T_i \dd t_q \cup S \cup R' \cup \{p_l(k_i)\}$
is a theta with ends $t_x, k_i$; if $y=x+1$,
then 
$t_1 \dd T_i \dd t_q \cup S \cup R' \cup \{p_l(k_i)\}$ is a pyramid with apex $k_i$ and
base $rt_xt_y$; and if
$y>x+1$, then
$t_1 \dd T_i \dd t_x \cup t_y \dd T_i \dd t_q \cup S \cup R' \cup  \{p_l(k_i)\}$
is a theta with ends $r,k_i$; in all cases a contradiction.
This proves \eqref{wheel}.
\\
\\
We have defined a set $A_i$ in the case when outcome 
\ref{path} or outcome \ref{path2} of \eqref{constricted} holds.
Now we define $A_i$ in the remaining case. Thus, assume that outcome
\ref{claw} of \eqref{constricted}  holds and $\{z_1,z_2,z_3\}$ is
constricted in $G_i$. By Theorem~\ref{stripdecomp}, there is a graph $H_i$ such that
$(G_i, \{p_{l(k_i)-1}, k_i,p_{l(k_i)+1}\})$ admits a
faithful extended strip decomposition $\eta_i$ with pattern
$H_i$.
\\
\\
\sta{There exists an atom $A$ of $\eta_i$,  and a   component $A_i$ of $A$ such that $w(A_i)>\frac{1}{2}$.
\label{bigatom}}

Suppose not.
Then by Lemma~\ref{lem:bigatom} applied to $G_i$,
we deduce that there exists $W_i \subseteq G_i$
with $|W_i| \leq d-100$, such that $N[W_i]$ is a $w$-balanced separator in
$G_i$. It follows from the definition of $G_i$
that $N[Y_i \cup W_i \cup \{p_k,k_1, p_{l(k_i)-1}, p_{r(k_i)+1},n_i\}]$ is a $w$-balanced separator in $G$,
which is a contradiction since $|Y_i| <80$. This proves~\eqref{bigatom}.
\\
\\
If outcome \ref{claw} of \eqref{constricted} holds, let $A_i$ be as in \eqref{bigatom}.
This completes the definition of $A_i$.

\sta{If  $P_L(i) \cap N_G[A_i] \neq \emptyset$, then
$|P_R(i) \cap N_G[A_i]| \leq 2$. \label{notbothsides}}

If outcome \ref{path} or outcome \ref{path2} of \eqref{constricted} holds, this follows immediately from
\eqref{wheel}. Thus we may assume that outcome
\ref{claw} of \eqref{constricted} holds and $\{z_1,z_2,z_3\}$ is constricted in $G_i$.
Suppose that both $P_L(i) \cap N_G[A_i] \neq \emptyset$ and
$|P_R(i) \cap N_G[A_i]|>2$. It follows from the definition of $G_i$
that $P_L(i) \cap N_{G_i}[A_i] \neq \emptyset$ and
$|P_R(i) \cap N_{G_i}[A_i]|>2$.

Let $Q_1=z_1 \dd P_L(i)$. Since $N[P]$ is a
$w$-balanced separator, we have that $N(p_k) \cap B \neq \emptyset$;
let $Q$ be a path from $z_2$ to $p_k$ with $Q^* \subseteq B$.
Then $F=z_3 \dd P_R(i) \dd p_k \dd Q \dd z_2$ is a path. Let $Q_3$ be the minimal subpath of $F$
from $z_3$ to a vertex of $N_{G_i}[A_i]$, and let
$Q_2$ be the minimal subpath of $F$ from $z_2$ to a
vertex of $N_{G_i}[A_i]$. Since $|P_R(i) \cap N_{G_i}[A_i]|>2$,
it follows that $Q_2$ is anticomplete to $Q_3$.
But now  $Q_1,Q_2,Q_3$ are pairwise disjoint and anticomplete to each other;
$z_i$ is an end of $Q_i$, and  $Q_i \cap N_{G_i}[A_i] \neq \emptyset$
for every $i \in \{1,2,3\}$, contrary to Lemma~\ref{threepaths}.
This proves~\eqref{notbothsides}.
\\
\\
If outcome \ref{claw} of \eqref{constricted} holds let $\delta_i$ be the boundary of $A_i$ in $\eta_i$,  let $\Delta_i'$ be a core for $\delta_i$
with $|\Delta_i'| \leq 3$ (such $\Delta_i'$ exists by Lemma~\ref{atomboundary}), and let $\Delta_i=\Delta_i' \cup \{n_i\}$, where $n_i$ is as in \eqref{constricted}\ref{claw}.
If outcome \ref{path} or outcome \ref{path2} of \eqref{constricted} holds, let
$\Delta_i$ be as in \eqref{wheel} and let $\delta_i=N[\Delta_i]$.
In both cases let  $\gamma_i=N_G[A_i] \cap P$.
\\
\\
\sta{Let $Z \subseteq V(G)$ with
$Y_i \cup \{p_k,k_i, p_{l(k_i)-1}, p_{r(k_i)+1}\} \subseteq Z$ 
and such that $\delta_i \subseteq N[Z]$. Let
$D \subseteq G \setminus N[Z]$ be connected with $w(D)>\frac{1}{2}$.
Then
$D \subseteq A_i$, $N[D] \cap J(i)=\emptyset$,  and
there exists $v \in D \cap N(B)$.
\label{meet}}

Since $w(B)>\frac{1}{2}$, it follows that $B \cap D \neq\emptyset$.
Similarly, since $w(A_i)>\frac{1}{2}$, $A_i \cap D \neq \emptyset$.
Since $Y_i \subseteq Z$, it follows from \eqref{dominateN(J(i))'} and \eqref{dominateN(J(i))} that
$J(i) \cap N[D] =\emptyset$. 
Since $Y_i \cup \{p_k,k_i, p_{l(k_1)-1}, p_{r(k_i)+1}\} \subseteq Z$  and since
$\delta_i \subseteq N[Z]$, it follows that
$N_{G \setminus J(i)}(A_i) \subseteq N[Z]$.
We deduce that  $D \subseteq A_i$. 
Since $p_k \in Z$,
and since $N[P]$ is a balanced separator in $G$, it follows that
$D \setminus B \neq \emptyset$. Since $D$ is connected, there exists
$v \in D \setminus B$ with a neighbor in $B$. 
This proves~\eqref{meet}.
\\
\\
 \sta{$|P_R(2) \cap N_G[A_2]|>2$.
 \label{startright}}

Suppose that $P_R(2) \cap N_G[A(2)] \leq 2$. Let
$$Z = Y_1 \cup Y_2 \cup \{p_k\} \cup \Delta_i \cup (\gamma_2 \cap P_R(2)).$$
We claim that $N[Z]$ is a balanced separator in $G$.
Suppose not, and let $D$ be a component of $G \setminus N[Z]$ with
$w(D) > \frac{1}{2}$.
By \eqref{meet}  $D \subseteq A_2$ , $N[D] \cap J(2)=\emptyset$ and
there exists
$v \in D \setminus B$ with a neighbor in $B$. Then $v \in N \cap A_2$. Let $v' \in P$ be a neighbor of $v$, choosing $v' \in P_R(2)$ if possible.
Since 
$N[D] \cap J(2)=\emptyset$ and since $v \in A_2$,
it follows that  $v' \in N[A_2] \cap (P \setminus J(2))$.
Since
$ Y_1 \subseteq Z$, it follows from
\eqref{dominateN(J(i))'}  that
$v$ is anticomplete to  $J(1)$. Since $k_1' \in Y_1$ and $k_2' \in Y_2$,
\eqref{KiJ(i)} implies that 
$v \not \in K_1 \cup K_2$. Since $v \in N$, it follows
from the choice of $v'$ that $v' \in \gamma_2 \cap P_R(2)$. But now  $v' \in Z$ and 
so  $v \in N[Z]$, contrary to the fact that
$v \in D$. This proves that $N[Z]$ is a balanced separator in
$G$, contrary to the fact that $|Z|<d$, and \eqref{startright}
follows.
\\
\\
\sta{$P_L(m) \cap N_G[A_m] \neq \emptyset$.
\label{endleft}}

Suppose that $P_L(m) \cap N_G[A_m] = \emptyset$. Let
$$Z = Y_m \cup \{p_k\} \cup \Delta_i.$$
We claim that $N[Z]$ is a balanced separator in $G$.
Suppose not, and let $D$ be a component of $G \setminus N[Z]$ with
$w(D) > \frac{1}{2}$. By \eqref{meet} 
$D \subseteq A_m$, $N[D] \cap J(m)=\emptyset$, and there exists
$v \in D \setminus B$ with a neighbor in $B$. Then $v \in N \cap A_m$;
let $v' \in P$ be a neighbor of $v$, choosing $v' \in P_L(m)$ if possible.
Since $N[D] \cap J(m)=\emptyset$ and since $v \in A_m$, we deduce that  $v' \in N[A_m] \cap (P \setminus J(m))$.
Since $k_m' \in Y_m$,   it follows from \eqref{KiJ(i)} that $v \not \in K_m$. By
the choice of $v'$, we deduce that  $v' \in P_L(m)$, a contradiction.
This proves that $N[Z]$ is a balanced separator in
$G$, contrary to the fact that $|Z|<d$, and \eqref{endleft}
follows.
\\
\\
By \eqref{startright} $|P_R(2) \cap N_G[A(2)]|>2$, and therefore
by \eqref{notbothsides} $P_L(2) \cap N_G[A(2)]=\emptyset$.
In view of this, let $i$ be maximum such that
$P_L(i) \cap N_G[A_i]= \emptyset$.
By \eqref{endleft}, $i<m$. By the maximality of $i$,
$P_L(i+1)  \cap N_G[A_{i+1}] \neq  \emptyset$, and therefore  by
\eqref{notbothsides} $|P_R(i+1)  \cap N_G[A_{i+1}]| \leq 2$.
Let
$$Z=Y_1 \cup Y_i \cup Y_{i+1} \cup \Delta_i \cup \Delta_{i+1} \cup
(\gamma_{i+1} \cap P_R(i+1)) \cup   \{p_k\}.$$ 
Then $|Z| < d$. To complete the proof,  we obtain a contradiction by showing that 
$N[Z]$ is a $w$-balanced separator in $G$.

Suppose not, and let $D$ be a component of
$G \setminus N[Z]$ with
$w(D) > \frac{1}{2}$. By \eqref{meet},
$D \subseteq A_i \cap A_{i+1}$, $N[D] \cap (J(i) \cup J(i+1))=\emptyset$,  and there exists
$v \in D \setminus B$ with a neighbor in $B$. Then $v \in N \cap A_i \cap A_{i+1}$; let $v' \in P$ be a neighbor of $v$.
Then $v' \in N_G[A_i] \cap N_G[A_{i+1}] \cap (P \setminus (J(i) \cup J(i+1)))$, and therefore $v' \in  P_L(i+1) \cup P_R(i+1)$.
Since $P_L(i) \cap N[A_i] = \emptyset$ and $P_R(i+1) \cap N[A_{i+1}] \subseteq Z$, it follows that $v' \in Z$.
But now  $v \in N[Z]$, contrary to the fact that $v \in D$.

\end{proof}

\section{Constricted sets in $K_{t,t}$-free graphs} \label{sec: constricted in ktt free}
\MariaIsEditing{\textcolor{red}{DO NOT MODIFY! Maria has the token!}}

Although we only use the results of this section on graphs in $\class$, they hold for a much larger class, namely $K_{t,t}$-free graphs. We, therefore, prove them in their full generality.
Let $G$ and $H$ be graphs, $Z\subseteq V(G)$ with $|Z|\geq 2$ and let $\eta$ be an extended strip decomposition of $(G,Z)$ with pattern $H$.  We say that a path $P=p_1\dd\dots\dd p_k$ in $G$ is a \textit{leaf path starting at $p_1$ and ending at $p_k$} if $p_1 \in Z$. 


The main result of this section is the following:

\begin{restatable}{theorem}{alphaHitPaths}
\label{lemma: hit all paths to v alpha version}
For every $t \in \nat$ there exists  $c=c(t)\in \nat$ with the following property.
    Let $G$ be a $K_{t,t}$-free graph, $Z \subseteq  V(G)$ with $|Z| \geq 2$ and $H$ be a graph. Let $\eta$ be an extended strip decomposition of $(G,Z)$ with pattern $H$. Then for every vertex $v\in V(G)\sm Z$, there exists $X\subseteq V(G)\sm v$ such that $X$ intersects all the leaf paths ending at $v$ and $\alpha(X)\leq c(t)$.
\end{restatable}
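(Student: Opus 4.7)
The plan is to localize the analysis at the atom of $\eta$ containing $v$, and use the ESD adjacency rules together with $K_{t,t}$-freeness to bound the independence number of a natural hitting set.

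Let $u^{\ast}=\eta^{-1}(v)\in V(H)\cup E(H)\cup T(H)$, and let $A_v$ denote the atom associated to $u^{\ast}$. A routine verification from the three ESD adjacency rules (the edge-edge rule, the vertex-atom rule, and the triangle-atom rule) shows that every leaf path $P=p_1\dd\cdots\dd p_k=v$ with $p_1\notin\eta(u^{\ast})$ has its last vertex outside $\eta(u^{\ast})$ in the boundary $\delta(A_v)$. Since each $z\in Z$ is the unique element of some $\eta(e_w,w)$ for the leaf $w$ of a leaf-edge $e_w$, at most one element of $Z$ lies in $\eta(u^{\ast})$ (and only when $u^{\ast}\in E(H)$ is itself a leaf edge). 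Hence I take $X=X_{\text{in}}\cup X_{\delta}$, where $X_{\text{in}}:=Z\cap\eta(u^{\ast})$ trivially hits every leaf path starting inside $\eta(u^{\ast})$, and $X_{\delta}\subseteq\delta(A_v)$ is the set of all entry vertices of leaf paths ending at $v$.

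The key structural observation is that $\delta(A_v)$ decomposes as a disjoint union of ``channels'' $\eta(e,u^{\ast})$: one per edge incident to $u^{\ast}$ when $u^{\ast}\in V(H)$, two channels $\eta(e,u)$ and $\eta(e,w)$ when $u^{\ast}=e\in E(H)$, and three channels along the triangle boundary when $u^{\ast}\in T(H)$. Moreover, the edge-edge adjacency rule implies that vertices from distinct channels are pairwise adjacent in $G$, so any independent set in $X_{\delta}$ is contained in a single channel.

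It remains to bound, for a fixed channel $\eta(e,u^{\ast})$, the size of an independent set $I\subseteq X_{\delta}\cap\eta(e,u^{\ast})$. Each $y\in I$ admits an induced path $Q_y$ from some $z_y\in Z$ to $y$ with interior disjoint from $\eta(u^{\ast})$. The plan is to show that $|I|\ge c(t)$ forces a $K_{t,t}$-subgraph in $G$: on the ``$u^{\ast}$-side'', any other channel $\eta(f,u^{\ast})$ with $f\ne e$ is completely joined to $I$, so an independent set of size $t$ in some $\eta(f,u^{\ast})$ would yield $K_{t,t}$ directly; when no channel other than $\eta(e,u^{\ast})$ carries a $t$-independent set, the second side of the sought $K_{t,t}$ must be assembled from the paths $Q_y$, by a pigeon-hole argument on common attachments in the sub-ESD induced on the side of $e$ opposite to $u^{\ast}$.

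The main obstacle will be executing this last $K_{t,t}$-extraction cleanly. When the other channels contribute no large independent set, one is forced to pull the second side out of the external paths $Q_y$, which traverse an arbitrary portion of the ESD. I expect to handle this by induction on $|E(H)|$ (or equivalently on $|V(G)|$): passing to a sub-ESD on the other side of $e$ gives a smaller instance that inherits both the ESD structure and $K_{t,t}$-freeness, and in the inductive step one can exploit the uniqueness of the atom containing each vertex to push the $Q_y$'s into a common channel of the smaller instance, at which point the same channel analysis applies and the desired $K_{t,t}$ is produced.
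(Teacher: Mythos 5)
Your proposal correctly identifies the right locale (the atom containing $v$ and its boundary) and correctly observes that the boundary of that atom decomposes into pairwise-complete ``channels'' $\eta(e,u^{\ast})$, so that any independent subset of entry vertices is confined to a single channel. This is exactly the observation encapsulated in the paper's Lemma on $Em(x)$ and the notion of a \emph{special neighbor}. However, there is a genuine gap at the point you yourself flag. You take $X_\delta$ to be the set of entry vertices of all leaf paths ending at $v$; to conclude you must bound $\alpha$ of $X_\delta \cap \eta(e,u^{\ast})$ for the one possibly large channel. Your plan is to extract a $K_{t,t}$ from the paths $Q_y$ that continue past these entry vertices, but this cannot work as stated: the paths $Q_y$ are not required to be disjoint, so a single vertex beyond the channel could be adjacent to all of your independent set without producing a second large independent side. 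Moreover, ``passing to the sub-ESD on the other side of $e$'' is not a well-defined operation: deleting the strip $\eta(e)$ and the vertex/triangle atoms at $u^{\ast}$ does not leave an extended strip decomposition of a clean residual graph, and the paths $Q_y$ may re-enter the deleted region, so the induction does not set itself up.

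The paper avoids this obstacle entirely and without induction. The key structural fact, which follows from $K_{t,t}$-freeness and the pairwise completeness of channels, is that each vertex $x\in V(H)$ has \emph{at most one} special neighbor $y$ (meaning $\alpha(\eta(xy,x))\geq t$). Consequently, the union $Em(x)$ of the non-special channels at $x$ has $\alpha(Em(x))<t$. Rather than putting the entry vertices of the special channel into the hitting set, the paper enlarges the region containing $v$ to absorb the entire special strip $\eta(xy)$, the atom at the far end, and any vertex/triangle atoms pointing to $xy$; the boundary of this enlarged region is then a union of at most four sets $Em(\cdot)$ together with $O(1)$ vertices of $Z$, giving $\alpha(X)<4t+6$. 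The one-special-neighbor-per-vertex property guarantees this ``special chain'' has bounded extent, which is precisely what your inductive scheme was trying (and failing) to control. If you replace the $K_{t,t}$-extraction and induction by the $Em(\cdot)$-aggregation idea, your argument will close.
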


In the remainder of the proof, we  will need the following consequence of
\cref{lemma: hit all paths to v alpha version}:

\begin{theorem}\label{Theorem: small set hit all constricted path to v}
 For every $t \in \nat$ there exists  $c=c(t)\in \nat$ with the following property.
    Let $G$ be a $K_{t,t}$-free graph, let $\omega=\omega(G)$, and let $Z \subseteq  V (G)$ be constricted. Then for every vertex $v\in V(G)\sm Z$, there exist at most $\omega^c$ pairwise vertex-disjoint (except at $v$) paths starting in $Z$ and ending at $v$. 
\end{theorem}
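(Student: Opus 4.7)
The plan is to combine three ingredients: \cref{stripdecomp} to upgrade the constricted set $Z$ to a faithful extended strip decomposition, \cref{lemma: hit all paths to v alpha version} to extract a set $X\subseteq V(G)\setminus\{v\}$ of small independence number that meets every leaf path ending at $v$, and Ramsey's theorem to convert the bound on $\alpha(X)$ (together with $\omega(G[X])\leq\omega$) into a polynomial bound on $|X|$ in terms of $\omega$. Once $|X|\leq\omega^c$ is established, the path bound follows by an elementary pigeonhole argument.

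Concretely, I would first dispose of the trivial cases $|Z|\leq 1$ and $\omega\leq 1$: in each of these, at most one path from $Z$ to $v$ can appear in any family that is vertex-disjoint except at $v$. So I assume $|Z|\geq 2$ and $\omega\geq 2$. Since $Z$ is constricted, \cref{stripdecomp} produces a graph $H$ and a faithful extended strip decomposition $\eta$ of $(G,Z)$ with pattern $H$. Applying \cref{lemma: hit all paths to v alpha version} to $G$, $\eta$ and $v$ yields $X\subseteq V(G)\setminus\{v\}$ with $\alpha(X)\leq c_0=c_0(t)$ such that every leaf path ending at $v$ meets $X$. Since $G$ is $K_{t,t}$-free and $G[X]$ inherits $\omega(G[X])\leq\omega$, Ramsey's theorem and the standard estimate $R(s,r)\leq\binom{s+r-2}{s-1}$ give
\[
|X|\;\leq\; R(c_0+1,\omega+1)-1\;\leq\;\binom{c_0+\omega}{c_0}\;\leq\;(c_0+\omega)^{c_0}\;\leq\;\omega^{c}
\]
for some $c=c(t)$ (absorbing constants into the exponent, enlarging $c$ if $\omega$ is small).

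Finally, consider any family $\mathcal{P}$ of pairwise vertex-disjoint (except at $v$) paths starting in $Z$ and ending at $v$. Each such path is a leaf path ending at $v$, hence meets $X$ at a vertex distinct from $v$; and because the paths are disjoint outside $v$ while $v\notin X$, each vertex of $X$ lies on at most one member of $\mathcal{P}$. Therefore $|\mathcal{P}|\leq|X|\leq\omega^{c}$, as required.

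The main (and essentially only) subtle point is verifying that the Ramsey estimate really can be absorbed into a single bound of the form $\omega^{c(t)}$ uniformly in $\omega$; this is routine, since $c_0$ depends only on $t$ and the only cases where $(c_0+\omega)^{c_0}$ exceeds $\omega^{c_0}$ by more than a constant factor occur when $\omega$ is bounded by a function of $t$, at which point enlarging $c$ trivially absorbs the discrepancy. All the structural content of the statement is supplied by \cref{stripdecomp} and \cref{lemma: hit all paths to v alpha version}.
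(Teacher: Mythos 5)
Your proposal is correct and follows essentially the same route as the paper: the paper's proof invokes \cref{stripdecomp} to obtain an extended strip decomposition, then applies \cref{lemma: hit all paths to v alpha version} followed by Ramsey's theorem (packaged as \cref{lemma: hit all paths esd version}) to bound $|X|$ by a polynomial in $\omega$, and concludes by the same disjointness count. Your treatment of the trivial cases and the slightly more careful Ramsey bookkeeping ($|X| \leq R(c_0+1,\omega+1)-1$ rather than $R(c_0,\omega+1)$) are both fine and, if anything, marginally tidier than the paper's.
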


We first prove \cref{Theorem: small set hit all constricted path to v}
assuming \cref{lemma: hit all paths to v alpha version}.
For positive integers $a,b$ let $R(a,b)$ be the smallest integer $R$, often called the Ramsey number, such that every graph on $R$ vertices contains either a stable set of size $a$ or a clique of size $b$.
\begin{theorem}[Ramsey \cite{ramsey}]\label{thm:ramsey}
    For all $c,s\in \nat$, $R(c,s) \leq c^{s-1}$
\end{theorem}

\begin{lemma}\label{lemma: hit all paths esd version}
    For every $t \in \nat$ there exists  $c=c(t)\in \nat$ with the following property.
    Let $G$ be a $K_{t,t}$-free graph, $\omega=\omega(G)$, $Z \subseteq  V (G)$ with $|Z| \geq 2$. Let $H$ be a graph, and let $\eta$ be an extended strip decomposition of $(G,Z)$ with pattern $H$. Then, for every vertex $v\in V(G)\sm Z$, there are at most $\omega^c$ pairwise vertex-disjoint (except at $v$) leaf paths ending at $v$. 
\end{lemma}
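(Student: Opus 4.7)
The plan is to derive this lemma as a quick corollary of \cref{lemma: hit all paths to v alpha version}: first produce an $\alpha$-small hitting set for all leaf paths ending at $v$, then use Ramsey's theorem to convert the bound on independence number into a bound on cardinality.

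Concretely, I would apply \cref{lemma: hit all paths to v alpha version} to obtain a set $X \subseteq V(G) \setminus \{v\}$ that intersects every leaf path ending at $v$ and satisfies $\alpha(X) \leq c_0$, where $c_0 = c_0(t)$ is the constant produced by that lemma. Suppose $P_1, \ldots, P_k$ are pairwise vertex-disjoint (except at $v$) leaf paths ending at $v$. Each $P_i$ must meet $X$ in some vertex $x_i$, and since the paths share only $v$ while $v \notin X$, the $x_i$ are pairwise distinct. Therefore $k \leq |X|$.

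It remains to bound $|X|$. Since $G$ is $K_{t,t}$-free and $\omega=\omega(G)$, in particular $X$ contains no clique of size $\omega+1$; combined with $\alpha(X) \leq c_0$, applying \cref{thm:ramsey} via the symmetry $R(a,b) = R(b,a)$ yields $|X| < R(c_0+1,\,\omega+1) = R(\omega+1,\,c_0+1) \leq (\omega+1)^{c_0}$. Absorbing the $+1$ into the exponent---for instance $(\omega+1)^{c_0} \leq \omega^{2 c_0}$ when $\omega \geq 2$, while the cases $\omega \leq 1$ are vacuous because a leaf path ending at $v \notin Z$ has at least two vertices and therefore requires an edge---gives $k \leq \omega^{c}$ for $c = 2 c_0 = 2 c_0(t)$.

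The substantive work all lives in \cref{lemma: hit all paths to v alpha version}; the present statement is essentially immediate from it. The only minor obstacles are picking the right orientation of Ramsey (so that the exponent, not the base, is the constant depending on $t$) and sweeping away the degenerate small-$\omega$ cases.
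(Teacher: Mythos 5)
Your proof is correct and matches the paper's argument: both apply \cref{lemma: hit all paths to v alpha version} to obtain an $\alpha$-bounded hitting set for the leaf paths ending at $v$, then invoke Ramsey (noting $G$ is $K_{\omega+1}$-free) to convert the independence-number bound into a cardinality bound of the form $\omega^{O(c_0(t))}$. The paper is terser and does not spell out the orientation of the Ramsey bound or the degenerate small-$\omega$ case, but the underlying reasoning is the same.
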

\begin{proof}
    Let $c(t)$ be the constant given by \cref{lemma: hit all paths to v alpha version}.
    By Lemma~\ref{lemma: hit all paths to v alpha version}, there exists a set $X$ that intersects all the leaf paths ending in $v$ and $\alpha(X)\leq c(t)$. Since $G$ is $K_{\omega+1}$-free, we have that $|X|\leq R(c(t),\omega+1) \leq \omega^{2c(t)}$,
    and the result follows.
\end{proof}

\begin{proof}[Proof of Theorem~\ref{Theorem: small set hit all constricted path to v}]
By Theorem~\ref{stripdecomp}, there exists an extended strip decomposition of $(G,Z)$. Now, the theorem follows from Lemma~\ref{lemma: hit all paths esd version}.
\end{proof}

We now turn to the proof of \cref{lemma: hit all paths to v alpha version}.
For the rest of this section, fix an integer $t>1$ and let $G$ be a $K_{t,t}$-free graph, $Z \subseteq  V (G)$ with $|Z| \geq 2$, and $H$ be a graph. Let $\eta$ be an extended strip decomposition of $(G,Z)$ with pattern $H$.
Let $x,y\in V(H)$, we say that $y$ is a \textit{special neighbor} of $x$ if $xy\in E(H)$ and $\alpha(\eta(xy,x))\geq t$. In that case, we call $xy$ a {\em special edge} of $x$. We say that a vertex $v \in V(G)$ is {\em safe}
if either
\begin{itemize}
\item there is no edge $xy$ of $H$ such that $v \in \eta(xy,x)$, or
\item there exists an edge $xy$ of $H$ such that $y$ is a
special neighbor of $x$, and $v \in \eta(xy,x)$.
\end{itemize}
Let $Safe(G)$ denote the set of all safe vertices of $G$. We observe:

\begin{lemma}\label{lemma: easy alpha emulation one vertex}
    Let $G$ be a $K_{t,t}$-free graph, and let $X_1,\dots,X_k \subseteq V(G)$ be disjoint and pairwise complete, then either
    \begin{itemize}
        \item $\alpha\left(\bigcup_{i=1}^n X_i\right) < t$, or
        \item there exists $i ^*$ such that $\alpha(X_{i^*}) \geq t$ and $\alpha\left(\bigcup_{i\neq i^*} X_i\right) < t$
    \end{itemize}
    Consequently, every vertex of $H$ has at most one special neighbor.
    \end{lemma}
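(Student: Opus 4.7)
The plan is to prove the lemma by a direct pigeonhole argument exploiting that any stable set is essentially trapped in a single $X_i$, and then to derive the consequence by a clean application to the two candidate special-neighbor sets.

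First I would suppose $\alpha\!\left(\bigcup_{i=1}^k X_i\right) \geq t$, and fix a stable set $S$ of size $t$ in this union. Because the $X_i$'s are pairwise complete, any two vertices from different $X_i$'s are adjacent, so $S$ cannot contain vertices from two distinct $X_i$'s. Hence $S$ lies entirely inside some $X_{i^*}$, giving $\alpha(X_{i^*}) \geq t$. This already handles the first conclusion. For the second, I would argue by contradiction: if also $\alpha\!\left(\bigcup_{i \neq i^*} X_i\right) \geq t$, then by the same reasoning there is a stable set $S'$ of size $t$ contained in some $X_{j^*}$ with $j^* \neq i^*$. Since $X_{i^*}$ and $X_{j^*}$ are complete to each other, every vertex of $S$ is adjacent to every vertex of $S'$. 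Then $S \cup S'$ induces a $K_{t,t}$ in $G$, contradicting $K_{t,t}$-freeness.

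For the consequence, suppose some $x \in V(H)$ has two distinct special neighbors $y_1, y_2$. Set $X_1 = \eta(xy_1, x)$ and $X_2 = \eta(xy_2, x)$. By the defining properties of an extended strip decomposition, each vertex of $G$ belongs to $\eta(z)$ for a unique $z \in E(H) \cup V(H) \cup T(H)$, so $\eta(xy_1)$ and $\eta(xy_2)$ are disjoint, hence $X_1 \cap X_2 = \emptyset$. Moreover, by the adjacency rule for edges sharing an endpoint in $H$, every vertex of $X_1$ is adjacent to every vertex of $X_2$, so $X_1$ and $X_2$ are complete to each other. By definition of ``special neighbor'', $\alpha(X_1) \geq t$ and $\alpha(X_2) \geq t$, which directly contradicts the lemma applied to $X_1, X_2$ (neither of the two alternative conclusions can hold).

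There is no serious obstacle here; the argument is essentially pigeonhole plus the defining adjacency rule for extended strip decompositions. The only point to be careful about is the verification that $X_1, X_2$ are disjoint and mutually complete using the precise conditions in the definition of $\eta$, but both follow immediately from the bulleted axioms stated in \cref{sec:esd definition}.
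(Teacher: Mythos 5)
Your proof is correct and uses essentially the same pigeonhole-plus-$K_{t,t}$ argument as the paper: a stable set must sit inside a single $X_i$ because the $X_i$ are pairwise complete, and two different indices achieving $\alpha \geq t$ would yield a $K_{t,t}$. You also spell out the derivation of the ``consequently'' clause (disjointness and completeness of $\eta(xy_1,x)$ and $\eta(xy_2,x)$ via the extended-strip-decomposition axioms), which the paper leaves implicit; that verification is correct.
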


\begin{proof}
    Notice that any stable set in $\bigcup_{i\neq i} X_i$ is a subset of one of the $X_i$. Therefore, the first case holds if there is no $i^*$ such that $\alpha(X_i)\geq t$. So we can assume that such an $i^*$ exists.
    Since $G$ is $K_{t,t}$-free, $i^*$ is unique. Moreover, since $X_{i^*}$ is complete to $\bigcup_{i\neq i^*} X_i$, we have that $\alpha\left(\bigcup_{i\neq i^*} X_i\right) < t$.
\end{proof}

For a vertex $x \in V(H)$, let $Em(x)=\bigcup \eta(xy,x)$
where the union is taken over all neighbors $y$ of $x$ that are not
special. We call $Em(x)$ {\em the set emulating $x$} in $G$.
Observe that $Em(x) \cap Safe(G)=\emptyset$.
It follows immediately from \cref{lemma: easy alpha emulation one vertex}
that $\alpha(Em(x)) < t$ for every $x \in V(H)$.

We are now ready to prove \cref{lemma: hit all paths to v alpha version}.
\begin{proof}
Let $c=4t+6$  and suppose that $v \in V(G)$ violates the conclusion of
\cref{lemma: hit all paths to v alpha version}.

Let $A$ be an atom of $\eta$. If $A=\eta(x)$ is a vertex atom, we say that
$A$ {\em points to an edge $e$ of $H$} if $e=xy$ and
$y$ is a special neighbor of $x$.
If $A=\eta(x_1x_2x_3)$ is a triangle  atom, we say that
$A$ {\em points to  an edge $e$ of $H$} if $e=x_ix_j$ for some $i,j \in \{1,2,3\}$ and  $x_i$ is a special neighbor of $x_j$, and $x_j$ is a special neighbor of
$x_i$.

\sta{Every vertex atom and every triangle atom points to at most one edge. \label{uniquepoint}}

Suppose first that  $A=\eta(x)$ is a vertex atom. Then $A$ only points to edges incident with $x$. Let $y \in V(H)$ be such that  $A$ points to the edge $xy$. Then
$y$ is a special neighbor of $x$, and therefore $xy$ is unique by
\cref{lemma: easy alpha emulation one vertex}.

Thus, we may assume that $A=\eta(x_1x_2x_3)$ is a triangle atom.
We may assume that $A$ points to the edge $x_1x_2$. Then
  $x_2$ is a special neighbor
of $x_1$, and $x_1$ is a special neighbor of $x_2$.
By  \cref{lemma: easy alpha emulation one vertex}
$x_3$ is not a special neighbor of $x_1$, and $x_3$ is not a special neighbor
of $x_2$.  Consequently,  $A$ does not point at
$x_1x_3$, and $A$ does not point at $x_2x_3$.
This proves \eqref{uniquepoint}.
\\
\\
Let $A= \eta(x)$ be a vertex atom  of  $\eta$.
If $A$ does not point to any edge, let $\overline{A}=A$ and let
$\Delta(A)=Em(x)$.
If $A$ points to an  edge $xy$ (which is unique by \eqref{uniquepoint}),
let $\overline{A}$ be the union of $\eta(xy) \cap Safe(G)$
with all the vertex and triangle atoms that point to $xy$,
and let $\Delta(A)=Em(x) \cup Em(y)$.

\sta{If $A$ is a vertex atom, then $\alpha(\Delta(A)) <2t$,
and $\Delta(A)$ separates $\overline{A}$ from $V(G) \setminus (\overline{A} \cup \Delta(A))$. \label{vertexatom}}

Since $\Delta(A)$ is contained in the union of at most two sets emulating
a vertex of $H$, it follows that 
$\alpha(\Delta(A)) < 2t$.
The second statement of \eqref{vertexatom} follows from
the definition of a strip structure. This proves \eqref{vertexatom}.

\sta{There is no $x \in V(H)$ such that $v \in \eta(x)$. \label{notvertex}}

Suppose $v \in A=\eta(x)$ for some $x \in V(H)$. Let
$X=\Delta(A) \cup (\overline{A} \cap Z)$. Since there is at
most one edge $e$ of $H$ such that $\overline{A}$ meets the
set $\eta(e)$, it follows that $|\overline{A} \cap Z| \leq 2$,
and so $\alpha(X) < 2t+2$. Since by \eqref{vertexatom} $X$ meets
every leaf path ending at $v$ that does not start in $\overline{A}$, 
\eqref{notvertex} follows.
\\
\\
Let $A= \eta(x_1x_2x_3)$ be a triangle atom  of $\eta$.
If $A$ does not point to any edge, let $\overline{A}=A$
and let $\Delta(A)=Em(x_1) \cup Em(x_2) \cup Em(x_3)$.
If $A$ points to an  edge $x_ix_j$ (which is unique by \eqref{uniquepoint}),
let $\overline{A}$ be the union of $\eta(x_ix_j) \cap Safe(G)$ 
with all the vertex and triangle atoms that point to $x_ix_j$, and
let $\Delta(A)= Em(x_i) \cup Em(x_j)$.

\sta{If $A$ is a triangle  atom, then $\alpha(\Delta(A)) <3t$, and
$\Delta(A)$ separates $\overline{A}$ from $V(G) \setminus (\overline{A} \cup \Delta(A))$. \label{triangleatom}}

Since $\Delta(A)$ is contained in the union of at most three sets emulating
a vertex of $H$, it follows that 
$\alpha(\Delta(A)) < 3t$.
The second statement of \eqref{triangleatom} follows from
the definition of a strip structure. This proves \eqref{triangleatom}.

\sta{There is no triangle $x_1x_2x_3$ of $H$  such that $v \in \eta(x_1x_2x_3)$. \label{nottriangle}}

Suppose $v \in A=\eta(x_1x_2x_3)$ for some triangle $x_1x_2x_3$ of $H$. Let
$X=\Delta(A) \cup (\overline{A} \cap Z)$. Since there is at
most one edge $e$ of $H$ such that $\overline{A}$ meets the set $\eta(e)$, it follows that $|\overline{A} \cap Z| \leq 2$,
and so $\alpha(X) < 2t+2$. Since by \eqref{triangleatom} $X$ meets
every leaf path ending at $v$ that does not start in $\overline{A}$, 
\eqref{nottriangle} follows.
\\
\\
\sta{$v \not \in Safe(G)$. \label{notsafe}}

Suppose that $v \in Safe(G)$. By \eqref{notvertex} and
\eqref{nottriangle}, $v \in  \eta(xy) \cap Safe(G)$ for some
edge $xy$ of $H$. Let $X=Em(x) \cup Em(y) \cup (\eta(xy) \cap Z)$.
Then $v \not \in X$. Since $|\eta(xy) \cap Z| \leq 2$,
we deduce   $\alpha(X) < 2t+2$.
It follows from
the defintion of a strip structure that $Em(x) \cup Em(y)$ meets
every leaf path ending at $v$ that does not start in
$\eta(xy) \cap Safe(G)$, and \eqref{notsafe} follows.
\\
\\
By \eqref{notsafe}, there exists an edge $xy$  of $H$ such that
$y$ is not a special neighbor of $x$ and $v \in \eta(xy,x)$.
Moreover, if $v \in \eta(xy,y)$, then $x$ is not a special neighbor
of $y$. Let $x'$ be the special neighbor of $x$ (if one exists),
and let $y'$ be the special neighbor of $y$ (if one exists). Note that
$x' \neq y$, but possibly $y'=x$.
Suppose first that either 
\begin{itemize}
    \item  $x=y'$, or 
    \item $x \neq y'$ and $v \in \eta(xy,y)$.
    \end{itemize}
Let $\mathcal{A}$ be the union of the following sets
\begin{itemize}
\item $\{v\}$,
\item $\eta(xy) \cap Safe(G)$,
\item $\eta(xx') \cap Safe(G)$ (if $x'$ is defined),
\item $\eta(yy') \cap Safe(G)$ (if $y'$ is defined),
\item all vertex and triangle atoms that point to $xx'$ (if $x'$ is defined),
\item all vertex and triangle atoms that point to $yy'$ (if $y'$ is defined),
\item $\eta(x)$,
\item all triangle atoms $\eta(xyw)$ with $w \in V(H) \setminus \{x,y\}$.
\end{itemize}
Let $\Delta$ be the union of the following sets
\begin{itemize}
\item $Em(x) \setminus \{v\}$,
\item $Em(y) \setminus \{v\}$,
\item $Em(x')$ (if $x'$ is defined),
\item $Em(y')$ (if $y'$ is defined).
\end{itemize}
Now assume that $x \neq y'$ and $v \not \in \eta(xy,y)$.
Let $\mathcal{A}$ be the union of the following sets
\begin{itemize}
\item $\{v\}$,
\item $\eta(xy) \cap Safe(G)$,
\item $\eta(xx') \cap Safe(G)$ (if $x'$ is defined),
\item all vertex and triangle atoms that point to $xx'$ (if $x'$ is defined),
\item $\eta(x)$,
\item all triangle atoms $\eta(xyw)$ with $w \in V(H) \setminus \{x,y\}$.
\end{itemize}
Let $\Delta$ be the union of the following sets
\begin{itemize}
\item $Em(x) \setminus \{v\}$,
\item $Em(y) \setminus \{v\}$,
\item $Em(x')$ (if $x'$ is defined).
\end{itemize}
Since $\Delta$ is contained in the union of at most four sets emulating a vertex of $H$, it follows that $\alpha(\Delta) < 4t$.
It follows  from the definition of a strip structure that $\Delta$
separates $\mathcal{A}$ from $V(G) \setminus (\mathcal{A} \cup \Delta)$,
and therefore $\Delta$ meets every leaf path ending at $v$ that
does not start in $\mathcal{A}$.  Let $X=\Delta \cup (\mathcal{A} \cap Z)$.
Since there are at
most three edges $e$ of $H$ such that $\overline{A}$ meets the
set $\eta(e)$, it follows that $|\overline{A} \cap Z| \leq 6$.
We deduce that $\alpha(X) <4t+6$, and $X$ meets every leaf path
that ends at $v$, a contradiction.
\end{proof}

\section{Cooperative sets} \label{sec: cooperative matroid}
\MariaIsEditing{\textcolor{red}{DO NOT MODIFY! Maria has the token!}}

In this section, we introduce the notion of \textit{cooperative sets}, which will be central to the rest of the proof.  Cooperative sets are used to prove \cref{thm: small pairwise separator}:
instead of trying to separate two vertices directly, we will construct a ``cooperative set'' starting with one of them and use its properties to separate it from the other vertex. In this section, we develop the necessary properties of cooperative sets,  culminating with 
\cref{corollary: matroid gives partition of neighborhood}. The main tools are \cref{corollary theta free degenerate}
(Theorem 4.4 from \cite{MatijaPolyDegen}) and \cref{thm:matroid_interesection}
(The Matroid Intersection Theorem \cite{edmonds1970}). The use of cooperative sets is explained in \cref{sec: cooperative set evolving degenerate partition}.

We start with two results from the literature.

\begin{lemma}[Lemma 2 from \cite{LOZIN2022103517}]\label{Lemma: ktt from pairwise adjacent}
\label{lemma scott's trick}
    For all positive integers $a$ and $b$, there is a positive integer $C = C(a, b)$ such that if a graph $G$ contains a collection of $C$ pairwise disjoint subsets of vertices, each of size at most $a$ and with at least one edge between every two of them, then $G$ contains a $K_{b,b}$-subgraph
\end{lemma}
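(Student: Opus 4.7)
The plan is to invoke a standard Ramsey-type pigeonhole. Label the pairwise disjoint subsets $S_1,\dots,S_C$ and, for each $i$, fix an arbitrary linear order on $S_i$ so that every vertex of $S_i$ carries a rank in $\{1,\dots,a\}$. For each pair $i<j$, use the hypothesis that there is at least one edge between $S_i$ and $S_j$ to choose a specific edge $e_{ij}=u_{ij}v_{ij}$ with $u_{ij}\in S_i$ and $v_{ij}\in S_j$. Color the pair $\{i,j\}$ by the ordered pair $(r,s)\in\{1,\dots,a\}\times\{1,\dots,a\}$, where $r$ is the rank of $u_{ij}$ in $S_i$ and $s$ the rank of $v_{ij}$ in $S_j$. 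This is an edge coloring of $K_C$ with at most $a^2$ colors.

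Now I would choose $C$ to be the multicolor Ramsey number $R_{a^2}(2b)$, so that the coloring admits a monochromatic clique on $2b$ indices $i_1<i_2<\dots<i_{2b}$ of some color $(r,s)$. Split this clique into a lower half $A=\{i_1,\dots,i_b\}$ and an upper half $B=\{i_{b+1},\dots,i_{2b}\}$. For $p\in A$ let $u_p$ be the vertex of rank $r$ in $S_p$, and for $q\in B$ let $v_q$ be the vertex of rank $s$ in $S_q$; these are $2b$ distinct vertices since the $S_i$'s are pairwise disjoint. By monochromaticity of the pair $\{p,q\}$ with $p<q$, the chosen edge $e_{pq}$ is exactly $u_pv_q$, so $u_pv_q\in E(G)$ for every $p\in A$ and $q\in B$, giving a $K_{b,b}$ subgraph.

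The only point requiring slight care is to ensure that the coloring is well-defined, i.e.\ that the roles of the two sets of each pair are ordered unambiguously; this is handled by always writing the pair as $\{i,j\}$ with $i<j$ and recording the rank in $S_i$ in the first coordinate. Beyond that, this is a textbook argument, and the bound $C=C(a,b)\le R_{a^2}(2b)$ is the natural one; I would not expect a substantive obstacle, although one could slightly refine $C$ by separately Ramsey-ing over the two coordinates if a sharper constant were desired.
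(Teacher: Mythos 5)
Your argument is correct: fixing a linear order on each $S_i$, coloring each pair $\{i,j\}$ (with $i<j$) by the ranks of the two endpoints of a chosen crossing edge, applying multicolor Ramsey with $a^2$ colors to extract a monochromatic $K_{2b}$, and then splitting the $2b$ indices into a lower and upper half does yield the required complete bipartite subgraph. The one delicate point you need — that the vertex of rank $r$ actually exists in $S_p$ for $p$ in the lower half (and similarly rank $s$ in $S_q$ for $q$ in the upper half) even though $|S_p|$ may be less than $a$ — holds automatically because the color $(r,s)$ is witnessed on some pair involving $p$, forcing $|S_p|\ge r$; it would have been worth stating explicitly.

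For comparison: the paper does not prove this lemma, it simply imports it as Lemma~2 of the cited Lozin reference, so there is no internal proof to measure yours against. The Ramsey-with-rank-colors argument you give is the standard proof of this statement (it is the ``Scott trick'' alluded to in the second label), and the bound $C(a,b)\le R_{a^2}(2b)$ you obtain is of the expected shape. A small aesthetic variant in the literature splits the Ramsey step in two (first homogenize the first coordinate, then the second), which gives a bound of the form $R_a(R_a(2b))$; this buys nothing qualitatively and your one-shot $a^2$-coloring version is cleaner.
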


\begin{corollary}[immediate corollary of Theorem 4.4 from \cite{MatijaPolyDegen} ]
\label{corollary theta free degenerate}
    There exists a polynomial $q$, such that the average degree of every theta-free graph $G$ is at most $q(\omega(G))$.
\end{corollary}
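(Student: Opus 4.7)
The plan is to invoke Theorem~4.4 of \cite{MatijaPolyDegen} essentially as a black box and translate its conclusion into a statement about average degree. That theorem is the main result giving polynomial degeneracy bounds for theta-free graphs: roughly, it shows that there is a polynomial $p$ such that every theta-free graph $G$ has degeneracy at most $p(\omega(G))$, where the degeneracy $d(G)$ is defined as $\max_{H} \delta(H)$ with the maximum taken over (induced) subgraphs $H$ of $G$ and $\delta(H)$ denoting the minimum degree of $H$.

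From this, the corollary follows by the standard relationship between degeneracy and average degree. Concretely, I would argue as follows. Let $G$ be a theta-free graph on $n$ vertices, and let $v$ be a vertex of minimum degree in $G$. By definition of degeneracy, $\deg_G(v) \leq d(G) \leq p(\omega(G))$. Removing $v$ yields a graph $G' = G \setminus \{v\}$ which is still theta-free (theta-freeness is hereditary) and satisfies $\omega(G') \leq \omega(G)$. Iterating gives an ordering $v_1, \ldots, v_n$ of $V(G)$ such that each $v_i$ has at most $p(\omega(G))$ neighbors among $\{v_{i+1}, \ldots, v_n\}$. Hence $|E(G)| \leq n \cdot p(\omega(G))$, so the average degree $2|E(G)|/n$ is at most $2p(\omega(G))$. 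Setting $q(x) = 2p(x)$ gives the desired polynomial.

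Since the statement is advertised as an immediate corollary, the only thing to check is that Theorem~4.4 of \cite{MatijaPolyDegen} really is stated in a form (degeneracy or minimum degree of subgraphs, or density) that implies an average degree bound via an elementary manipulation like the one above; if it is already phrased in terms of average degree or density, no argument is needed at all. The one small subtlety is to ensure that the hereditary nature of being theta-free is used correctly when applying a degeneracy bound, since degeneracy is defined via subgraphs rather than the graph itself. This is routine and is the only step beyond quotation.
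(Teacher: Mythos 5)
Your proposal is correct and matches what the paper intends: the paper gives no explicit proof, treating the statement as immediate from the degeneracy bound of Theorem~4.4 of \cite{MatijaPolyDegen}, and your translation from degeneracy at most $p(\omega(G))$ to average degree at most $2p(\omega(G))$ is exactly the standard elementary argument one is expected to supply.
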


Let $G$ be a graph. A \textit{matching} $M\subseteq E(G)$ is a set of disjoint edges. 
We denote by $V(M)$ the set of all endpoints of the edges of $M$.
We say that a vertex $x$ is \textit{matched} by $M$ (or that $M$ {\em matches} $x$)  if there exists an edge in $M$ such that $v$ is one of its endpoints.
Let $X,Y \subseteq V(G)$.  A matching {\em from $X$ to $Y$} is a matching each of whose edges has one end in $X$ and the other end in $Y$.
We say that $X$ \textit{matches into} $Y$ if there exists a matching from $X$ to $Y$ and that matches every vertex in $X$. A matching is said to be induced if $E(G[V(M)]) = M$.
We denote $V(M)$ the set of all vertices matched by the matching $M$.

In the following, by a polynomial in two variables $x$ and $y$, we mean a finite sum of terms of the form $a_{ij}x^iy^j$, where $a_{ij}$ are real non-zero coefficients,  and $i,j$ are non-negative integers. We show:
\begin{lemma}\label{Lemma: induced matching from non induced}
    There exists a polynomial $p(x,y)$ 
    for which the following holds. Let $k\in \nat$, $G$ be a theta-free graph, and $M=\set{(x_i,y_i)}_{i=1}^{p(\omega(G),k)}$ a matching (not necessarily induced) in $G$ then there exist $M'\subseteq M$ such that $|M'|\geq k$ and $M'$ is an induced matching.

\end{lemma}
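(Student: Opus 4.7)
The natural approach is to build an auxiliary \emph{conflict graph} $H$ on vertex set $[N]$, where $N = p(\omega(G), k)$ and the edge set is defined by: $ij \in E(H)$ iff there exists an edge of $G$ between $\{x_i, y_i\}$ and $\{x_j, y_j\}$. By construction, any stable set $S$ in $H$ corresponds to a submatching $M' = \{(x_i,y_i) : i \in S\}$ such that $E(G[V(M')]) = M'$, i.e., an induced matching. So it suffices to show $H$ contains a stable set of size $k$.

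The plan is to show that $H$ is $d$-degenerate for $d$ polynomial in $\omega(G)$, using \cref{corollary theta free degenerate}. Fix any $S \subseteq [N]$ and consider the induced subgraph $H[S]$. Each edge $ij \in E(H[S])$ is witnessed by an edge of $G[V(M_S)]$ between $\{x_i,y_i\}$ and $\{x_j,y_j\}$ that is \emph{not} one of the matching edges. Thus
\begin{equation*}
 \sum_{i\in S} \deg_{H[S]}(i) \;\leq\; \sum_{i \in S} \bigl(\deg_{G[V(M_S)]}(x_i) + \deg_{G[V(M_S)]}(y_i) - 2\bigr) \;=\; 2|E(G[V(M_S)])| - 2|S|.
\end{equation*}
Since induced subgraphs of theta-free graphs are theta-free, $G[V(M_S)]$ is theta-free, so by \cref{corollary theta free degenerate} its average degree is at most $q(\omega(G))$. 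Because $|V(M_S)| = 2|S|$, this gives $2|E(G[V(M_S)])| \leq 2|S| \cdot q(\omega(G))$, so the average degree of $H[S]$ is at most $2q(\omega(G))$.

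Setting $d := \lceil 2q(\omega(G))\rceil$, this means that every induced subgraph of $H$ contains a vertex of degree at most $d$, i.e., $H$ is $d$-degenerate. A standard greedy argument (equivalently, a $(d+1)$-coloring) then produces an independent set in $H$ of size at least $N/(d+1)$. Choosing
\begin{equation*}
 p(\omega, k) \;:=\; k\bigl(2q(\omega)+1\bigr),
\end{equation*}
which is polynomial in $\omega$ and $k$, guarantees such an independent set of size $\geq k$, and hence an induced sub-matching of size $k$.

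The main step that needs care is the degree-counting inequality for $H[S]$: the bookkeeping must use that $M$ is a matching (so the vertex sets $\{x_i,y_i\}$ are pairwise disjoint), that each conflict edge in $H[S]$ corresponds to a non-matching edge of $G[V(M_S)]$, and that the hereditary property of theta-freeness lets us apply \cref{corollary theta free degenerate} to the \emph{induced} subgraph $G[V(M_S)]$ rather than to $G$. Everything else (degeneracy implying a linear-size independent set, and the stable-set/induced-matching correspondence) is routine.
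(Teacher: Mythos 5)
Your proof is correct, and it takes a genuinely different route from the paper's. The paper first applies \cref{corollary theta free degenerate} to $G$ itself to color the endpoints so that $\{x_i\}_{i\in I}$ and $\{y_i\}_{i\in I}$ are both stable sets for a large sub-matching $I$; it then builds the conflict graph on those pairs and invokes Ramsey's theorem: a large stable set is the desired induced matching, while a large clique feeds into Lemma 2 of \cite{LOZIN2022103517} (Scott's trick) to produce a $K_{3,3}$ subgraph between two stable sets, which is therefore induced and contradicts theta-freeness. You instead bound the degeneracy of the conflict graph $H$ directly: every induced subgraph $H[S]$ has its edges witnessed by distinct non-matching edges of the theta-free graph $G[V(M_S)]$, so \cref{corollary theta free degenerate}, applied hereditarily, bounds the average degree of $H[S]$ by $2q(\omega(G))$, and a greedy $(d{+}1)$-coloring finishes. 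This bypasses Ramsey and Scott's trick entirely, and yields a quantitatively better bound, namely $p(\omega,k)=O(k\,q(\omega))$, compared to the paper's $k^{C(2,3)}(q(\omega)+1)^2$, which is polynomial of higher degree in both $k$ and $q(\omega)$. The only cosmetic issue is that $d:=\lceil 2q(\omega)\rceil$ makes $k(2q(\omega)+1)$ potentially a hair too small when $q(\omega)$ is non-integral; taking $p(\omega,k)=k(2q(\omega)+2)$, say, fixes this without affecting anything. Your key observation --- that each edge of $H[S]$ charges to a distinct edge of $G[V(M_S)]$ because the pair-sets $\{x_i,y_i\}$ are disjoint --- is exactly the place where the matching hypothesis is used, and you've handled it carefully.
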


\begin{proof}
     Let $C(a,b)$ be defined as in Lemma~\ref{Lemma: ktt from pairwise adjacent}, let $q$ be the polynomial from \cref{corollary theta free degenerate} and let $p(x,k) = k^{C(2,3)}(q(x)+1)^2$ . 
     Let $$X = \set{x_i| i \in 1,\dots,k^{C(2,3)}(q(x)+1)^2}$$ and $$Y = \set{y_i| i \in 1,\dots,k^{C(2,3)}(q(x)+1)^2}.$$
     By \cref{corollary theta free degenerate}, $G$ is $q(\omega(G))$ degenerate and therefore $q(\omega(G))+1$ colorable. It follows that we can find a subset $I\subseteq \set{1,\dots,p(\omega(G))}$ such that such that $|I| = k^{C(2,3)}$ and $X'=\set{x_i}_{i\in I}$ and $Y'=\set{y_i}_{i\in I}$ are stable sets.
     Let $H$ be the graph with the vertex set $\set{(x_i,y_i) | i\in I}$ and where $(a,b)$ is adjacent to $(b,d)$ if $\set{a,b}$ and $\set{c,d}$ are not anticomplete. Since, by \cref{thm:ramsey}, $|I|\geq R(k,C(2,3))$ , $H$ contains either a clique of size $C(2,3)$ or a stable set of size $k$.  If $H$ contains a stable set of size $k$, we are done since this corresponds to an induced matching of size $k$.
    Therefore, we can assume that $H$ contains a clique of size $C(2,3)$. This implies that we have a set of $C(2,3)$ sets of two vertices (the edges) with an edge between every two of them. Therefore, by \cref{lemma scott's trick}, $G[Y'\cup X']$ contains $K_{3,3}$ as a (non necessarily induced) subgraph. Finally, since both $X'$ and $Y'$ are independent sets, this $K_{3,3}$ is actually induced, which is a contradiction as $G$ is theta-free.
\end{proof}

\MariaIsEditing{\textcolor{red}{DO NOT MODIFY! Maria has the token!}}
Let $G$ be a graph, and $X\subseteq V(G)$ be connected. We define the \textit{boundary} of $X$ in $G$, which we denote by $\delta^G(X) = \delta^G_1(X)$, to be the set of vertices in $X$ having at least a neighbor in $G\sm X$. Let $\delta^G_i(X) = \delta^G(X\sm \bigcup_{k<i} \delta^G_k(X))$.

We say that $X$ is \textit{cooperative} in $G$ if the  following three  conditions hold \begin{itemize}
    \item  every vertex in $\delta^G_1(X)$ has a neighbor in $X\sm \delta^G_1(X)$
    \item  every vertex in $\delta^G_2(X)$ has a neighbor in $X\sm (\delta^G_1(X)\cup \delta^G_2(X))$ 
    \item $X\sm (\delta^G_1(X)\cup \delta^G_2(X))$ is connected 
\end{itemize}
See \cref{fig:cooperative set}.

\begin{figure}[h]
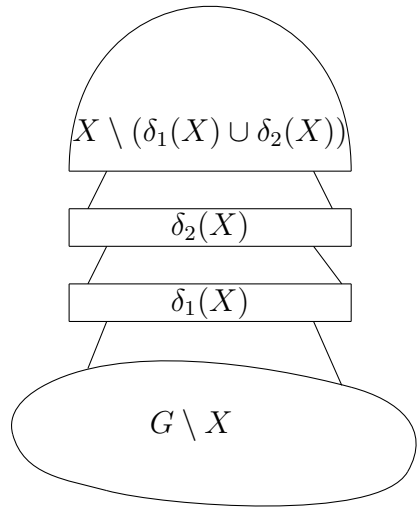

        \centering
        \scalebox{1}{\tikzfig{cooperative_set}}
        \caption{Visualization of a cooperative set}
        \label{fig:cooperative set}
\end{figure}
When the graph $G$ is clear from the context, we may omit it from the notation.

\begin{lemma}\label{Lemma N(cooperative) is cooperative}
    Let $G$ be a graph and $X$ be a cooperative set in $G$. Let $D_1,\dots,D_k$ be the connected components of $G\sm N[X]$. Then for all $1\leq i\leq k$, $X\cup N_G(D_i)$ is a cooperative set in $X\cup N[D_i]$.
\end{lemma}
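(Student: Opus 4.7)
The plan is to verify the three cooperativity conditions for $X' := X \cup N_G(D_i)$ inside the ambient graph $G'' := G[X \cup N[D_i]]$. First I would identify the boundaries. Since $D_i$ is a component of $G \setminus N[X]$, every vertex of $N_G(D_i)$ has a neighbor in $D_i$ and therefore lies in $N_G(X) \setminus X$. Consequently $X \cap N_G(D_i) = \emptyset$, the complement $G'' \setminus X'$ equals $D_i$, and no vertex of $X$ has a neighbor in $D_i$. This immediately forces $\delta_1^{G''}(X') = N_G(D_i)$, and each such vertex has a neighbor in $X = X' \setminus \delta_1^{G''}(X')$, giving the first cooperativity condition.

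Next, $X' \setminus \delta_1^{G''}(X') = X$, so $\delta_2^{G''}(X')$ equals the set $A$ of vertices of $X$ with a neighbor in $N_G(D_i)$. The key observation is the inclusion $A \subseteq \delta_1^G(X)$. Using the first cooperativity condition for $X$ in $G$, each $v \in A$ has a neighbor $u \in X \setminus \delta_1^G(X)$; such a $u$ has no neighbor outside $X$ in $G$, so in particular $u \notin A$, which yields the second cooperativity condition for $X'$.

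The substantive step is the third condition: that $X \setminus A$ is connected. Writing $Y := X \setminus (\delta_1^G(X) \cup \delta_2^G(X))$, cooperativity of $X$ in $G$ gives that $Y$ is connected, that every vertex of $\delta_2^G(X)$ attaches to $Y$ (so $Y \cup \delta_2^G(X)$ is connected), and that every vertex of $\delta_1^G(X)$ attaches to $Y \cup \delta_2^G(X)$. Since $A \subseteq \delta_1^G(X)$, I can write $X \setminus A = (Y \cup \delta_2^G(X)) \cup (\delta_1^G(X) \setminus A)$; every vertex of $\delta_1^G(X) \setminus A$ is joined to the connected set $Y \cup \delta_2^G(X) \subseteq X \setminus A$ by an edge that stays inside $X \setminus A$, so $X \setminus A$ is connected.

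The whole argument is essentially a diagram chase, with everything hinging on the inclusion $\delta_2^{G''}(X') \subseteq \delta_1^G(X)$. The main potential pitfall is making sure the paths witnessing connectivity do not leave $X \setminus A$; this is automatic because $Y$, $\delta_2^G(X)$, and $\delta_1^G(X) \setminus A$ are all disjoint from $A$ by construction, so no detour through $A$ is ever needed.
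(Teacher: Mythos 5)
Your proof is correct and follows essentially the same route as the paper's: identify $\delta_1^{G''}(X')=N_G(D_i)$ and $\delta_2^{G''}(X')\subseteq\delta_1^G(X)$, then use the cooperativity of $X$ in $G$ to transfer the three conditions. Your treatment of the second condition (explicitly invoking the first cooperativity condition for $X$ in $G$ to find the neighbor $u\in X\setminus\delta_1^G(X)\subseteq X\setminus A$) is a bit more spelled out than the paper's terse chain of equalities, but the underlying argument is the same.
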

\begin{proof}
    Let us fix an $i$ arbitrarily and let $G'=X\cup N_G[D_i]$ and $X'= X \cup N_G(D_i) = N_{G'}[X]$. We have that $\delta_1^{G'}(X')=N_G(D_i)=N_{G'}(D_i)$ and $\delta_2^{G'}(X')=N_{G'}(N_{G'}(D_i)) \cap X'$. See \cref{fig: N(cooperative) set}.
    \begin{figure}[H]
        \centering
        \scalebox{1}{\tikzfig{n_cooperative}}
        \caption{Visualization for \cref{Lemma N(cooperative) is cooperative}}
        \label{fig: N(cooperative) set}
    \end{figure}

    Let us now verify that the three conditions to be a cooperative set hold.
    
    \sta{The first condition holds \label{claim: first condition}}
    
    We have that \[N_{G'}(X'\sm \delta_1^{G'}(X')) = N_{G'}(X) = N_{G'}(D_i)=\delta_1^{G'}(X').\]
    This proves \eqref{claim: first condition}.

    \sta{The second condition holds \label{claim: second condition}}

    Moreover, we have that \begin{align*}
        N_{G'}(X'\sm (\delta^{G'}_1(X')\cup \delta^{G'}_2(X')))&= N_{G'}\left((X\cup N_{G'}(D_i))\sm \left( N_{G'}(D_i)\cup \left(N_{G'}(N_{G'}(D_i)) \cap X'\right)\right)\right)\\
        &= N_{G'}(X\sm N_{G'}(N_{G'}(D_i)))\\
        &= \delta_2^{G'}(X')
    \end{align*}
    This proves \eqref{claim: second condition}.

    \sta{The third condition holds \label{claim: third condition}}
    Notice that $\delta^{G'}_2(X') \subseteq \delta_1^{G}(X)$. Therefore, $X'\sm (\delta^{G'}_1(X')\cup \delta^{G'}_2(X')) = (X\sm \delta_1^G(X)) \cup Y$ for some $Y\subseteq \delta_1^{G}(X)$. Since $X$ is cooperative in $G$, $X\sm \delta_1^G(X)$ is connected and every vertex in $\delta_1^{G}(X)$ has a neighbor in $\delta_2^G(X)$ and thus, every vertex in $Y$ has a neighbor in $X\sm \delta_1^G(X)$. This proves \eqref{claim: third condition}.
\end{proof}

\begin{lemma}\label{Lemma cooperative deletion}
    Let $G$ be a graph, $X$ be a cooperative set in $G$, and $C\subseteq (G\sm X) \cup \delta_1^G(X)$. Let $D$ be a connected component of $G\sm (X\cup C)$ which is not anticomplete to $X\sm C$. Then $X'=\left((X\sm \delta_1^G(X))\cup N(D)\right)\cap (X\sm C)$ is cooperative in $G'= (N[D]\sm C) \cup X'$.
\end{lemma}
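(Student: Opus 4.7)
The plan is to carefully unpack the definitions of $\delta_1^{G'}$ and $\delta_2^{G'}$ and check the three cooperativity conditions one by one. Two preliminary observations drive the whole argument. First, $N(D)\subseteq X\cup C$: if $v\in N(D)$ were outside $X\cup C$, it would share a component of $G\sm(X\cup C)$ with $D$ and hence lie in $D$, contradicting $v\in N(D)$. Therefore $N(D)\sm C\subseteq X$, and the vertex set of $G'$ admits the clean disjoint decomposition $V(G')=D\sqcup X'$. Second, by the hypothesis $C\subseteq(V(G)\sm X)\cup\delta_1^G(X)$, we have $C\cap X\subseteq\delta_1^G(X)$, so removing $C$ from $X$ does not touch the deep interior $X\sm\delta_1^G(X)$.

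From these, I would read off $\delta_1^{G'}(X')=N(D)\cap(X\sm C)$ and, using $C\cap X\subseteq\delta_1^G(X)$, the identity $X'\sm\delta_1^{G'}(X')=X\sm(\delta_1^G(X)\cup C)$. Since no vertex of the latter set has any neighbor outside $X$ in $G$ (not being in $\delta_1^G(X)$), and in particular no neighbor in $D$, the second layer satisfies $\delta_2^{G'}(X')\subseteq\delta_2^G(X)$.

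I would then verify the three cooperativity conditions by transporting the corresponding properties of $X$ in $G$ to $X'$ in $G'$. Conditions (1) and (2) follow immediately because the witness neighbors produced by cooperativity of $X$ lie in $X\sm\delta_1^G(X)$ (resp.\ $X\sm(\delta_1^G(X)\cup\delta_2^G(X))$), which avoids $C$ (since $C\cap X\subseteq\delta_1^G(X)$) and also avoids $\delta_1^{G'}(X')\subseteq\delta_1^G(X)$ and $\delta_2^{G'}(X')\subseteq\delta_2^G(X)$, so it survives in $X'$ outside the appropriate layers. For condition (3), I would write $X'\sm(\delta_1^{G'}(X')\cup\delta_2^{G'}(X'))=Y_0\cup Y_1$, where $Y_0:=X\sm(\delta_1^G(X)\cup\delta_2^G(X))$ is connected by hypothesis and $Y_1\subseteq\delta_2^G(X)$ collects the vertices of $\delta_2^G(X)$ that fell outside $\delta_2^{G'}(X')$; each vertex of $Y_1$ has, by cooperativity of $X$, a neighbor in $Y_0$, so $Y_0\cup Y_1$ is connected. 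The main conceptual obstacle is keeping track of exactly which vertices of $\delta_2^G(X)$ end up in the deep layer of $X'$ in $G'$ versus migrate outward, and the above union argument handles this without any further input.
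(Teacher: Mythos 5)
Your proof is correct and follows essentially the same route as the paper's: compute $\delta_1^{G'}(X')$, observe $X'\sm\delta_1^{G'}(X')=X\sm\delta_1^G(X)$ and $\delta_2^{G'}(X')\subseteq\delta_2^G(X)$, and then transport the three cooperativity conditions of $X$ to $X'$, with the deep interior of $X'$ decomposed as the deep interior of $X$ together with a piece of $\delta_2^G(X)$ whose vertices attach to it. You spell out a couple of bookkeeping facts that the paper leaves implicit (namely $N(D)\subseteq X\cup C$, $V(G')=D\sqcup X'$, and $C\cap X\subseteq\delta_1^G(X)$), but the mechanism is identical.
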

\begin{proof}
    We have that $\delta_1^{G'}(X') = N_{G'}(D)$. Since $X'\sm \delta_1^{G'}(X') = X\sm \delta_1^G(X)$ the first condition of being cooperative holds. We also have that $\delta_2^{G'}(X') \subseteq \delta_2^G(X)$ and that $X'\sm (\delta^{G'}_1(X')\cup \delta^{G'}_2(X')) \supseteq X\sm (\delta^G_1(X)\cup \delta^G_2(X)) $ so the second condition holds. Finally, we have that $X'\sm (\delta^{G'}_1(X')\cup \delta^{G'}_2(X')) = (X\sm (\delta^G_1(X))\cup \delta^G_2(X)) \cup Y$ for some $Y\subseteq \delta_2^G(X)$. Since $ X\sm (\delta^G_1(X)\cup \delta^G_2(X))$ is connected and every vertex in $Y$ has a neighbor in this set, $X'\sm (\delta^{G'}_1(X')\cup \delta^{G'}_2(X'))$ is connected, and
    the third condition of being cooperative holds.
\end{proof}

In the remainder of this section, we will assume familiarity with basic matroid theory (see \cite{oxleyMatroid} for an introduction to the subject).
Let $X\subseteq V(G)$ be a cooperative set of $G$ and let $b$ be a vertex anticomplete to $X$.
Let $\Mca_{X,b}^1$ on $G$ be the pair $(N(X),\Ic_1)$  where $\Ic_1$ is the set of subsets of $N(X)$ that are matched into $\delta(X)$.

Let $\Mca_{X,b}^2$ on $G$ be the pair $(N(X),\Ic_2)$ where $\Ic_2$ is the set of subsets $Y$ of $N(X)$ for which they are $|Y|$ vertex-disjoint paths (except at $b$) from $Y$ to $b$ that are internally disjoint from $N[X]$.

\begin{lemma}\label{Lemma: both are matroids}
    Both $\Mca_{X,b}^1$ and $\Mca_{X,b}^2$ on $G$ are matroids.
\end{lemma}
\begin{proof}
    For $\Mca_{X,b}^1$, consider the bipartite graph $G_1$ that is built from $G[\delta(X)\cup N(X)]$ by removing the internal edges in both $\delta(X)$ and $N(X)$. Then $\Mca_{X,b}^1$ is the transversal matroid on $G_1$ (see Theorem 1.61 of \cite{oxleyMatroid}).
    For $\Mca_{X,b}^2$, consider the digraph $G_2$ with vertex set $V(G)\sm X$ and where $(x,y)\in E(G_2)$ if $\set{x,y}\in E(G)$ and $y\notin N(X)$. Let $G_2'$ be the digraph made from $G_2$ by adding $|V(G)\sm N(X)|$ copies of $b$ along with its incident edges. Then, $\Mca_{X,b}^2$ is the gammoid on $G_2'$ made with the pairwise vertex disjoint paths from $N_{G}(X)$ to copies of $b$ in $G_2'$ (see  \cite{gammoids}).
\end{proof}

\begin{lemma}\label{lemma : small intersection of matroids}
Let $G$ be a (theta, pyramid)-free graph, $X$ be a cooperative set in $G$, and $b\in V(G)$ be anticomplete to $X$. There exists a polynomial $p$ for which the size of the largest common independent set of $\Mca_{X,b}^1$ and $\Mca_{X,b}^2$ on $G$ is strictly smaller than $p(\omega(G))$.
\end{lemma}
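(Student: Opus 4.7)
The plan is a proof by contradiction. Suppose $I$ is a common independent set of $\Mca_{X,b}^1$ and $\Mca_{X,b}^2$ whose size exceeds a polynomial $p$ in $\omega(G)$ to be determined; we aim to produce a theta or pyramid in $G$. From the $\Mca_{X,b}^1$-independence of $I$ we obtain a matching $M$ in $G$ from $I$ into $\delta(X)$. Since $G$ is theta-free, \cref{Lemma: induced matching from non induced} extracts from $M$ an induced sub-matching $M'$ of size $k$, where $k$ will be chosen at the end. Writing $V(M')=I'\cup Z_X$ with $M'$-pairs $z_i \leftrightarrow m_i$, the induced matching property forces $I'$ and $Z_X$ to be stable in $G$ and $z_i\not\sim m_j$ whenever $i\neq j$. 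From the $\Mca_{X,b}^2$-independence of $I'$ we obtain $k$ pairwise vertex-disjoint (except at $b$) paths from $I'$ to $b$ with interior in $V(G)\setminus N[X]$; letting $D$ denote the component of $G\setminus N[X]$ containing $b$, these paths live in $G':=G[I'\cup D]$.

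The heart of the proof is the claim that $I'$ is constricted in $G'$. Stability is immediate; for the no-three-in-a-tree condition, suppose for contradiction an induced tree $T\subseteq G'$ contains three $I'$-vertices $z_1,z_2,z_3$. Since $I'$ is stable in $G$, each $z_i$'s $G'$-neighbors lie in $D$; applying \cref{lem: minimalconnected} to $G'$ with $z_1,z_2,z_3$ yields a structural witness $\Sigma_D$ of one of three types (path, subdivided claw, or triangle with three legs). In parallel, the cooperative property of $X$ implies that $X^-:=X\setminus\delta_1(X)=\delta_2(X)\cup X^\circ$ is connected (since $X^\circ$ is connected by cooperativity and each $\delta_2$-vertex has a neighbor in $X^\circ$) and each $m_i$ has a neighbor in $X^-$. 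Applying \cref{lem: minimalconnected} once more inside $G[X^-\cup\{m_1,m_2,m_3\}]$ yields an analogous witness $\Sigma_X$ with interior in $X^-$.

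We stitch $\Sigma_D$ and $\Sigma_X$ together through the matching edges $z_im_i$. The needed anticompleteness across the two sides is supplied by: (a) $X^-$ is anticomplete to $V(G)\setminus X$ by the definition of $\delta_1$; (b) the stability of $I'$ and $Z_X$ together with the crossing non-edges $z_i\not\sim m_j$ ($i\neq j$) from the induced matching; and (c) the outcome-specific anticompleteness guarantees within \cref{lem: minimalconnected}. A case analysis over the nine combinations of outcomes shows each produces a theta or pyramid in $G$: two subdivided claws stitch into a theta (with the two apexes as its ends); a claw paired with a triangle stitches into a pyramid; and the path outcome~(i) of \cref{lem: minimalconnected} contributes the ``two-neighbor'' vertex needed to close up a theta in the remaining cases. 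The most delicate combination is triangle $+$ triangle, which a priori produces only a prism; this is resolved by invoking the full connectivity of $X^\circ$ to reroute one branch of $\Sigma_X$, upgrading the prism to a theta. Every case contradicts the (theta, pyramid)-freeness of $G$.

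With $I'$ established as constricted in $G'$, we note that $G$ being theta-free implies $G$ is $K_{3,3}$-free (as $K_{3,3}$ is itself a theta), hence $G'$ is $K_{3,3}$-free. \cref{Theorem: small set hit all constricted path to v} applied with $t=3$ then bounds the number of pairwise vertex-disjoint paths from $I'$ to $b$ in $G'$ by $\omega(G')^{c(3)}\leq \omega(G)^{c(3)}$. Setting $k:=\omega(G)^{c(3)}+1$ contradicts this bound, and the lemma follows with $p$ obtained by substituting this choice of $k$ into the polynomial provided by \cref{Lemma: induced matching from non induced}. The principal obstacle is the triangle $+$ triangle subcase of the combination step, which requires the most delicate use of the cooperative structure of $X$; the remaining cases are a more-or-less routine unwinding of the outcomes of \cref{lem: minimalconnected}.
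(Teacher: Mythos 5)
Your proposal follows the same high-level plan as the paper (induced matching $\to$ constricted set $\to$ \cref{Theorem: small set hit all constricted path to v}), but the constriction step is handled quite differently, and the difference introduces a genuine gap.

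The paper uses \cref{Lemma: induced matching from non induced} \emph{twice}: first to get an induced matching $M_1$ from $I'\subseteq N(X)$ into $\delta_1(X)$, then — after first proving the intermediate claim that no vertex of $\delta_2(X)$ is adjacent to more than two of the matched $\delta_1(X)$-endpoints — a second time to get an induced matching $M_2$ from a subset of $\delta_1(X)$ into $\delta_2(X)$. Composing these gives pairwise anticomplete short paths $P^i$ from $N(X)$ through $\delta_1(X)$ into $\delta_2(X)$. Crucially, \cref{lem: minimalconnected} is then applied in the restricted graph $X\setminus(\delta_1(X)\cup\delta_2(X))\cup P^1\cup P^2\cup P^3$, where each $x_i$ and each $m_i$ has a \emph{unique} neighbor — this rules out outcome (i) of \cref{lem: minimalconnected} on the $X$-side entirely. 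On the $D$-side, the paper does not apply \cref{lem: minimalconnected} at all; it uses the minimal tree $T$ witnessing the constriction failure directly, so the $D$-side structure is always a path or a subdivided claw, never a triangle. You apply \cref{Lemma: induced matching from non induced} only once and then invoke \cref{lem: minimalconnected} on \emph{both} sides, which forces a $3\times 3$ case analysis that the paper deliberately avoids.

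The gap is exactly where you flag it: the triangle $+$ triangle combination gives a prism. Note that prisms are \emph{not} excluded in this paper — indeed, the whole point of this work (as stated in \cref{sec:intro}) is to generalize \cite{tw3} and \cite{TI2} by dropping the prism exclusion, since $\mathcal{L}_t$ contains prisms for large $t$. So ``we get a prism'' is not a contradiction, and the burden is entirely on you to upgrade it. The proposed fix, ``rerouting one branch of $\Sigma_X$ through $X^\circ$'', is not a proof: rerouting one leg of a prism does not remove either triangle, and the connectivity of $X^\circ$ by itself gives no control over how a rerouted branch interacts with the other two legs of $\Sigma_X$. A cleaner fix — which is what the paper does — is to use the minimized tree $T$ on the $D$-side instead of a fresh \cref{lem: minimalconnected} application; since a minimal tree through three stable vertices is a path or a subdivided claw (never a triangle), the $D$-side can never contribute the second triangle, and triangle $+$ claw correctly yields a pyramid. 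Even with that repair, however, your $X$-side analysis is looser than the paper's: by applying \cref{lem: minimalconnected} inside all of $X^- \cup\{m_1,m_2,m_3\}$ you allow outcome (i), where some $m_k$ has two attachments into a path in $X^-$, and the stitching in that sub-case needs to be verified carefully; the paper's second induced matching $M_2$ eliminates this outcome from the start by making each $m_i$'s attachment into $\delta_2(X)$ unique within the analyzed subgraph.
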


\begin{proof}
Since $G$ is theta-free, it follows that $G$ is $K_{3,3}$-free.
    Let $c=c(3)$  given by Theorem~\ref{Theorem: small set hit all constricted path to v} 
    and let $p(\omega(G)) = q(\omega(G),2q(\omega(G),\omega(G)^c+1))$
   where $q$ is given by Lemma~\ref{Lemma: induced matching from non induced}. 
    Suppose the statement is false for this choice of $p$ for some $G$, $X$, and $b$. Let $I$ be an independent set of both $\Mca_{X,b}^1$ and $\Mca_{X,b}^2$ of size $p(\omega(G))$.
    By \cref{Lemma: induced matching from non induced}, there exists $I'\subseteq I$ such that $|I'|=2q(\omega(G),\omega^c+1)$
    and there is an induced matching $M_1$ that matches  $I'$ into $\delta_1(X)$. Let $Y$ be the vertices of $\delta_1(X)$ matched by  $M_1$.
\\
\\
    \sta{No vertex in $\delta_2(X)$ is adjacent to more than $2$ vertices of  $Y$ \label{claim: second layer of matching}}
    
    Suppose not and let $x_1,x_2,x_3\in Y$ and $v$ in $\delta_2(X)$ such that $\set{x_1,x_2,x_3}\subseteq N(v)$. Let $m_1,m_2,m_3$ be the vertices in $N(X)$ matched to respectively $x_1,x_2,x_3$ by $M_1$.
    Let us now consider $G' = G\sm N[X] \cup \set{x_1,x_2,x_3,m_1,m_2,m_3}$. Since  the set 
    $\{m_1,m_2,m_3\}$ is independent in $\Mca_{X,b}^2$, $x_1,x_2,x_3$ are in the same connected component of $G'$. Let $H$ be a minimal connected subgraph of $G'$ such that $\set{x_1,x_2,x_3}\subseteq N(H)$. We apply \cref{lem: minimalconnected} to analyze the structure of $H$.
    Since $x_1,x_2,x_3$ each have a unique neighbor in $H$ and $\set{m_1,m_2,m_3}$ is a stable set, the first outcome of \cref{lem: minimalconnected} cannot happen. If the second outcome happens, then $\{v\}\cup\bigcup_{i\leq 3} P_i$ forms a theta, which is a contradiction. Therefore, the last outcome of \cref{lem: minimalconnected} happens, but then $\{v\}\cup\bigcup_{i\leq 3} P_i$ forms a pyramid, which is also a contradiction.  This proves \eqref{claim: second layer of matching}.
\\
\\    
    Since every vertex of $\delta_1(X)$ has a neighbor in $\delta_2(X)$ and by \eqref{claim: second layer of matching}, we can find a (non-induced) matching of size $q(\omega(G),\omega(G)^c+1)$  that matches $Y$ into $\delta_2(X)$. Therefore, by Lemma~\ref{Lemma: induced matching from non induced}, there exists $Y'\subseteq Y$  with $|Y'| = \omega(G)^c+1$ and   an induced matching $M_2$ that matches $Y'$ into 
    to $\delta_2(X)$.  Using the edges incident with $Y'$ in both $M_1$ and $M_2$, we get a set of $\omega(G)^c+1$ pairwise anticomplete paths $P^1,\dots,P^{\omega(G)^c+1}$ of length $3$ from $N(X)$ to $\delta_2(X)$. Let $Z\subseteq I'$ be the set of the ends of these paths in $N(X)$.
\\
\\
    \sta{$Z$ is a constricted set in $(G\sm N[X]) \cup Z $. \label{claim: matching is constricted}}
    Suppose not; let $x_1,x_2,x_3 \in Z$ and let $T$ be an induced subgraph of $G$ such that
    $T$ is a tree and $x_1,x_2,x_3 \in T$. We may assume that $T$ is chosen to be minimal with these properties. Then either $T$ is a path with ends in $\{x_1,x_2,x_3\}$, or $T$ is a subdivision of the bipartite graph $K_{1,3}$ and $x_1,x_2,x_3$ are the leaves of $T$. 
    We may assume that $x_i$ is the end of the path $P^i$ for every $i \in \{1,2,3\}$.
    
    Let $G' = X\sm (\delta_1(X)\cup \delta_2(X)) \cup P^1 \cup P^2 \cup P^3$,
    and  let $H$ be a minimal connected subgraph of $G'$ such that $\set{x_1,x_2,x_3}\subseteq N(H)$. As before, we apply \cref{lem: minimalconnected} to analyze the structure of $H$.
    Since $x_1,x_2,x_3$ each have a unique neighbor in $H$ and since these neighbors are distinct and form a stable set, the first outcome of \cref{lem: minimalconnected} cannot happen. If the second outcome happens, then $T\cup\bigcup_{i\leq 3} P_i$ forms a theta, which is a contradiction. Therefore, the last outcome of \cref{lem: minimalconnected} happens, but then $T\cup\bigcup_{i\leq 3} P_i$ forms a pyramid, which is also a contradiction.  This proves \eqref{claim: matching is constricted}.
\\
\\  
    By Theorem~\ref{Theorem: small set hit all constricted path to v}, there are at most  
    $\omega(G)^c$ vertex disjoint paths from $Z$ to $b$ in  $G\sm X$, which contradicts the fact that $I'$ is an independent set of $\Mca_{X,b}^2$.
\end{proof}

We remind the reader of the celebrated Matroid Intersection Theorem \cite{edmonds1970}.
\begin{theorem}[Matroid Intersection Theorem~\cite{edmonds1970}]
	\label{thm:matroid_interesection}
 Let $M_1=(U,{\mathcal I}_1)$ and $M_2=(U,{\mathcal I}_2)$ be two matroids with the same ground set $U$. Then
 \[\max\,\{\,|I|\colon I\in {\mathcal I}_1\cap {\mathcal I}_2\,\}=\min\,\{\,\text{rank}_{M_1}(A)+\text{rank}_{M_2}(U\sm A)\big| A\subseteq U\,\}.\]
\end{theorem}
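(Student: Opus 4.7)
The plan is to prove the two inequalities separately. The easy direction (weak duality) is almost immediate: for any $A\subseteq U$ and any common independent set $I\in\mathcal{I}_1\cap\mathcal{I}_2$, both $I\cap A$ and $I\cap(U\sm A)$ inherit independence, so
\[|I| \;=\; |I\cap A| + |I\cap(U\sm A)| \;\leq\; \text{rank}_{M_1}(A) + \text{rank}_{M_2}(U\sm A).\]
Taking the maximum on the left and the minimum on the right yields $\max\leq\min$.

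For the non-trivial direction, I would use the standard augmenting-path argument in the \emph{exchange graph}. Given a current common independent set $I$, build a digraph $D_I$ on vertex set $U$ with arcs $x\to y$ (for $x\in I$, $y\in U\sm I$) whenever $(I\sm\{x\})\cup\{y\}\in\mathcal{I}_1$, and arcs $y\to x$ whenever $(I\sm\{x\})\cup\{y\}\in\mathcal{I}_2$. Let $S_1=\{y\in U\sm I : I\cup\{y\}\in\mathcal{I}_1\}$ and $S_2=\{y\in U\sm I : I\cup\{y\}\in\mathcal{I}_2\}$. The central claim is that $I$ is maximum in $\mathcal{I}_1\cap\mathcal{I}_2$ if and only if no directed path in $D_I$ goes from $S_1$ to $S_2$.

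The hard part, and the main obstacle, is the augmenting-path lemma: if $P=(y_0,x_1,y_1,\dots,x_k,y_k)$ is a \emph{shortest} $S_1$–$S_2$ path in $D_I$, then $I':=(I\sm\{x_1,\dots,x_k\})\cup\{y_0,\dots,y_k\}$ lies in $\mathcal{I}_1\cap\mathcal{I}_2$ and has size $|I|+1$. The proof of this relies on the matroid exchange axiom in its ``unique perfect matching'' form: in the bipartite exchange graph associated to $M_1$ (respectively $M_2$) restricted to the swap pairs along $P$, minimality of $P$ forces a unique perfect matching, and this guarantees that the symmetric difference is independent in each matroid. I would prove the $M_1$ part using the arcs $x_i\to y_i$ and $y_0$ being an $M_1$-source, and the $M_2$ part symmetrically using the arcs $y_{i-1}\to x_i$ and $y_k$ being an $M_2$-sink.

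The proof is then completed by showing that if no $S_1$–$S_2$ path exists, then the certifying set $A$ is furnished by the reachability structure itself. Let $A\subseteq U$ be the set of vertices from which $S_2$ is \emph{not} reachable in $D_I$; by hypothesis $S_1\subseteq A$ and $S_2\subseteq U\sm A$. I would then verify that $\text{rank}_{M_1}(A)=|I\cap A|$ and $\text{rank}_{M_2}(U\sm A)=|I\cap(U\sm A)|$: any element of $A\sm I$ that could be added $M_1$-independently to $I\cap A$ would, via an appropriate exchange, create an arc leaving $A$ toward $S_2$, contradicting unreachability; symmetrically for $M_2$ on $U\sm A$. Adding these two equalities gives $\text{rank}_{M_1}(A)+\text{rank}_{M_2}(U\sm A)=|I|$, establishing $\min\leq\max$ and completing the proof.
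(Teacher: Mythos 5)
The paper does not prove this statement: it is Edmonds' Matroid Intersection Theorem, quoted verbatim and cited from \cite{edmonds1970}, and then used as a black box in \cref{corollary: matroid gives partition of neighborhood}. So there is no ``paper's own proof'' to compare against; your proposal is supplying a proof the authors deliberately omitted. Your outline is the standard exchange-graph/augmenting-path proof, and the weak-duality direction and the overall architecture (exchange graph $D_I$, sources $S_1$, sinks $S_2$, shortest augmenting paths, reachability cut) are all correct.

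There is, however, a concrete sign error in the final verification step that would make the proof fail as written. You take $A$ to be the set of vertices from which $S_2$ is \emph{not} reachable, so $S_1\subseteq A$ and $A$ is closed under out-arcs of $D_I$. You then claim $\text{rank}_{M_1}(A)=|I\cap A|$. But if some $y\in S_1\setminus I$ lies in $A$ (which it does, since $S_1\subseteq A$), then $I\cup\{y\}\in\mathcal{I}_1$, hence $(I\cap A)\cup\{y\}\in\mathcal{I}_1$, and no arc out of $A$ is produced at all — $y$ needs no exchange to be added for $M_1$. So $\text{rank}_{M_1}(A)\geq |I\cap A|+1$ and the equality is false. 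The arcs-leave-$A$ argument you sketch only covers the case where an exchange is actually forced; it silently assumes that the ``free'' case ($y\in S_1$) lands outside $A$, which is exactly backwards for your $A$. The fix is either to take $A$ to be the set of vertices \emph{not reachable from} $S_1$ (so $S_1$, not $S_2$, is excluded from $A$, and then both rank equalities go through as you intend), or to keep your $A$ and prove the swapped equalities $\text{rank}_{M_2}(A)=|I\cap A|$ and $\text{rank}_{M_1}(U\setminus A)=|I\setminus A|$, then present $U\setminus A$ as the certifying set in the theorem's $\min$. Either repair is routine, but the verification as currently stated does not go through.
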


From \cref{lemma : small intersection of matroids} and  \cref{thm:matroid_interesection}, we deduce:

\begin{theorem}\label{corollary: matroid gives partition of neighborhood}
    There exists a polynomial $q$ such that the following holds.
    Let $G$ be a (theta, pyramid)-free graph, $X$ be a cooperative set in $G$, and $b\in V(G)$ anticomplete to $X$. Then there exists a partition $(A,N(X)\sm A)$ of $N(X)$ such that:
    \begin{itemize}
        \item the maximum matching from $A$ to $\delta(X_1)$ is of size at most $q(\omega(G))$ and
        \item there are a most $q(\omega(G))$ vertex-disjoint paths (except at $b$) from $N(X)\sm A$ to $b$ that are internally disjoint from $N[X]$.
    \end{itemize}
\end{theorem}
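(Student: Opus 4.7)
The plan is to derive this directly from the Matroid Intersection Theorem applied to the two matroids $\Mca_{X,b}^1$ and $\Mca_{X,b}^2$ on the common ground set $N(X)$, using \cref{lemma : small intersection of matroids} to control the right-hand side.

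First, I would recall the identification of the ranks in each matroid. Since $\Mca_{X,b}^1$ was shown in \cref{Lemma: both are matroids} to be a transversal matroid on the bipartite graph between $\delta(X)$ and $N(X)$, the rank of any set $A \subseteq N(X)$ in $\Mca_{X,b}^1$ equals the maximum size of a matching from $A$ into $\delta(X)$. Similarly, $\Mca_{X,b}^2$ is the gammoid built from vertex-disjoint paths to copies of $b$, so by the standard Menger-type characterization of gammoid rank, the rank of any set $B \subseteq N(X)$ in $\Mca_{X,b}^2$ equals the maximum number of pairwise vertex-disjoint paths (except at $b$) from a subset of $B$ to $b$, internally disjoint from $N[X]$. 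Since paths may always be restricted to a subset without increasing their count, this maximum is also an upper bound on the number of pairwise disjoint such paths originating in all of $B$.

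Next, I would set $q = p$, where $p$ is the polynomial supplied by \cref{lemma : small intersection of matroids}. Applying \cref{thm:matroid_interesection} to $\Mca_{X,b}^1$ and $\Mca_{X,b}^2$ yields a subset $A \subseteq N(X)$ minimizing
\[
\operatorname{rank}_{\Mca_{X,b}^1}(A) + \operatorname{rank}_{\Mca_{X,b}^2}(N(X)\sm A),
\]
and this minimum equals the size of the largest common independent set, which by \cref{lemma : small intersection of matroids} is strictly less than $p(\omega(G))$. In particular each of the two ranks is at most $p(\omega(G))$.

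Translating back via the identifications above, the maximum matching from $A$ into $\delta(X)$ has size at most $q(\omega(G))$, and the maximum number of pairwise vertex-disjoint paths (except at $b$) from $N(X)\sm A$ to $b$ that are internally disjoint from $N[X]$ is at most $q(\omega(G))$. This gives the desired partition $(A, N(X)\sm A)$. No step looks like a real obstacle here — the only genuine content was already packaged into \cref{lemma : small intersection of matroids}; the remaining work is to align the matroid-theoretic language (ranks of transversal matroids and gammoids) with the combinatorial quantities in the statement, which is routine.
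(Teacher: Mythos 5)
Your proof is correct and matches the paper's approach exactly: the paper states \cref{corollary: matroid gives partition of neighborhood} as an immediate deduction from \cref{lemma : small intersection of matroids} and \cref{thm:matroid_interesection} without writing out the details, and your proposal supplies precisely the routine translation (transversal-matroid rank is max matching size, gammoid rank is max number of disjoint paths by Menger) that makes the deduction go through. The only quirk is that the statement writes $\delta(X_1)$ where it clearly means $\delta(X)$, and you correctly read it that way.
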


\section{Cooperative pairs and degenerate partitions} \label{sec: cooperative set evolving degenerate partition}
\MariaIsEditing{\textcolor{red}{DO NOT MODIFY! Maria has the token!}}
In this section, we prove \cref{thm: small pairwise separator omega version} and
deduce from it our first main result \cref{thm: small pairwise separator}.

Let $G$ be a graph, and let $X\subseteq G$ be connected and $C\subseteq G$ be such that $C\cap X = \mt$. We call a pair $(X,C)$ a \textit{cooperative pair in} $G$ if $X$ is cooperative in the connected component of $G\sm C$ that contains $X$. Let $G'\subseteq G$ and let $a$ and $b$ be distinct vertices of $G$. We say that a cooperative pair $(X,C)$ in $G'$ \textit{separates} $a$ from $b$ in $G$ if the following conditions hold:
\begin{itemize}
    \item  $a\in X\sm \delta^{G'}(X)$, and
    \item $b\in G\sm (X\cup C)$,  and 
    \item $\delta^{G'}(X)\cup C$ separates $a$ from $b$ in $G$.
\end{itemize}
In order to prove \cref{thm: small pairwise separator omega version}  we start by defining  a 
cooperative set $X_0=N_G^2(a)$, and 
create an improving sequence of pairs $(X_i,C_i)$ and subgraphs $G_i$ such that $(X_i,C_i)$ is a cooperative pair in $G_i$ that separates $a$ from $b$ in $G$. To make the notion of improvement precise, we define the ``value'' of a cooperative pair.  
To do so, we make use of a vertex partition for theta-free graphs first introduced in \cite{Three-path-configurations}.  This allows us to bound the number of improvement steps, and as a result, bound the size of the separator. We now explain this in detail.  

Following the proof of Theorem 7.1 of \cite{Three-path-configurations} and using Theorem 4.4 of \cite{MatijaPolyDegen} we deduce:

\begin{theorem} \label{degenerate partition}
  There exists a polynomial  $p$ such that the following holds:
  let $G$ be theta-free with $|V(G)|=n$ and clique number $\omega$. Then, there exists a partition
  $(S_1, \dots, S_k)$ of $V(G)$ with the following properties:
  \begin{enumerate}
  \item $k \leq p(\omega) \log n$.
  \item $S_i$ is a stable set for every $i \in \{1, \dots, k\}$.
  \item For every $i \in \{1, \dots, k\}$ and $v \in S_i$,

  we have
    $\deg_{G \setminus \bigcup_{j <i}S_j}(v)  \leq p(\omega)$. \label{hubsequence-3}
  \end{enumerate}
  \end{theorem}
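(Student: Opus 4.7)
The plan is to follow the template of the proof of Theorem~7.1 of \cite{Three-path-configurations}: iteratively peel off a large stable set of low-degree vertices from the residual graph, losing a constant fraction of vertices at every step. The only upgrade needed over the original template is to substitute the polynomial degeneracy bound of Corollary~\ref{corollary theta free degenerate} for the weaker bound available there.

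Let $q$ be the polynomial from Corollary~\ref{corollary theta free degenerate}. The first observation I would use is that theta-freeness is preserved under induced subgraphs, so for every induced subgraph $G'$ of $G$ the average degree of $G'$ is at most $q(\omega(G'))\leq q(\omega)$. By a Markov-type argument on the degree sequence, at least half of the vertices of any such $G'$ have degree at most $2q(\omega)$ in $G'$.

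Next I would construct the sets $S_i$ inductively. Set $G_1:=G$; assuming $G_i$ is non-empty, let $L_i$ be the set of vertices of degree at most $2q(\omega)$ in $G_i$, so the observation above gives $|L_i|\geq |V(G_i)|/2$. Since $G_i[L_i]$ has maximum degree at most $2q(\omega)$, it is properly $(2q(\omega)+1)$-colorable, and so some color class $S_i\subseteq L_i$ satisfies $|S_i|\geq |L_i|/(2q(\omega)+1)\geq |V(G_i)|/(4q(\omega)+2)$. Put $G_{i+1}:=G_i\setminus S_i$, which is again theta-free.

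To conclude, I would verify the three requested properties with $p(\omega):=4q(\omega)+2$. Property~(2) is immediate from the construction, and property~(3) follows because every $v\in S_i\subseteq L_i$ satisfies $\deg_{G_i}(v)\leq 2q(\omega)\leq p(\omega)$. For property~(1), the recursion $|V(G_{i+1})|\leq |V(G_i)|(1-1/p(\omega))$ forces $|V(G_{k+1})|\leq n\exp(-k/p(\omega))$, so taking $k:=\lceil p(\omega)\log n\rceil$ suffices since $\ln 2<1$. The main ingredient is already in place via Corollary~\ref{corollary theta free degenerate}, so no real obstacle remains; the rest is routine.
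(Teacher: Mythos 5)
Your proof is correct and matches what the paper intends: the paper's ``proof'' is just a pointer to the template of Theorem~7.1 of \cite{Three-path-configurations} combined with the polynomial degeneracy bound of \cite{MatijaPolyDegen}, and your argument is exactly that template carried out explicitly. The only (harmless) loose ends are that $q$ should be taken non-decreasing and the ceiling can push $k$ one above $p(\omega)\log n$, both absorbed by mildly enlarging $p$ for $n\geq 2$.
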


Let $\Pi = (S_1, \dots, S_k)$ be a partition of $V(G)$ as in \cref{degenerate partition} and let and $v\in V(G)$. We define the \textit{value} of $v$ in $G$ denoted $val^G_\Pi(v)$ as the index $i$ such that $v\in S_i$. For a set $X\subseteq V(G)$, we define its \textit{value} in $G$, denoted $val^G_\Pi(X)$ as follows.  If $X \neq \emptyset$, then  
$val^G_\Pi(X)=\max\set{val^G_\Pi(v) \big| v \in\delta^G(X)}$
and if $X= \emptyset$,  then $val^G_\Pi(X) = 0$. 
Similarly, we define the \textit{value} of the cooperative pair in $G$ as $val^G_\Pi(X,C) = val^{G\sm C}_\Pi(X)$.

\begin{lemma}
    \label{lemma small neighborhood in cooperative}
    There exists a polynomial $p$ for which the following holds. Let $G$ be a (theta, pyramid)-free graph,  let $X$ be a cooperative set in $G$, and let $b\in N(X)$. Then  $|N(b)\cap X| \leq p(\omega(G))$.
\end{lemma}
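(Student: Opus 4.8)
The plan is to argue by contradiction: assuming $|N(b)\cap X|$ exceeds a suitable polynomial in $\omega:=\omega(G)$, I will exhibit a theta or a pyramid in $G$. Write $\delta_i:=\delta_i^G(X)$ and let $Z:=X\setminus(\delta_1\cup\delta_2)$ be the ``core'' of the cooperative set. First, $Y:=N(b)\cap X\subseteq\delta_1$, since $b\notin X$ and hence each vertex of $N(b)\cap X$ has a neighbor outside $X$. By \cref{corollary theta free degenerate}, $G$ is $q(\omega)$\nobreakdash-degenerate for a polynomial $q$, hence $(q(\omega)+1)$\nobreakdash-colorable, so $Y$ contains a stable set $Y_1$ with $|Y_1|\ge |Y|/(q(\omega)+1)$.

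Next I record two structural facts. On one hand, every $v\in Y_1$ has a neighbor in $X\setminus\delta_1$ (first cooperativity condition), and any such neighbor lies in $\delta_2$ (a vertex of $X\setminus\delta_1$ with a neighbor in $\delta_1$ is, by definition, in $\delta_2$); in particular $\delta_1$ — and thus each $v\in Y_1$ — is anticomplete to $Z$, and $b$ is anticomplete to $X\setminus\delta_1$. On the other hand, no vertex $w\in X\setminus\delta_1$ has three neighbors $v_1,v_2,v_3\in Y_1$: together with $b$ this would induce a $K_{2,3}$ with parts $\{b,w\}$ and $\{v_1,v_2,v_3\}$ (using $b\not\sim w$ and that $Y_1$ is stable), which is a theta. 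Hence a maximum matching from $Y_1$ into $\delta_2$ has size at least $|Y_1|/2$: an unmatched vertex of $Y_1$ has all of its $\ge 1$ neighbors in the matched part of $\delta_2$, and each matched vertex of $\delta_2$ can absorb at most one such unmatched vertex. Applying \cref{Lemma: induced matching from non induced} to this matching, if $|Y_1|$ is large enough we obtain an induced matching of size $3$, matching $v_1,v_2,v_3\in Y_1$ to $w_1,w_2,w_3\in\delta_2$, with $\{v_1,v_2,v_3,w_1,w_2,w_3\}$ inducing exactly this matching.

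Now set $G'':=G[Z\cup\{w_1,w_2,w_3\}]$. It is connected, since $Z$ is connected (third cooperativity condition) and each $w_i\in\delta_2$ has a neighbor in $Z$ (second cooperativity condition). Crucially, in $G''$ the \emph{only} neighbor of $v_i$ is $w_i$: indeed $v_i\sim w_i$; $v_i\not\sim w_j$ for $j\ne i$ because the matching is induced; and $v_i$ is anticomplete to $Z$. Let $H$ be a minimal connected induced subgraph of $G''$ containing a neighbor of each $v_i$, and apply \cref{lem: minimalconnected} with $x_i=v_i$. Outcome~(i) would force one of the $v_i$ to have at least two neighbors in $H\subseteq G''$, which is impossible; this is precisely where the cooperative structure, via the ``unique neighbor'' phenomenon, excludes the path-like (wheel-type) configuration. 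In outcome~(ii) we get $a\in H$ and internally disjoint, pairwise anticomplete induced paths $P_1,P_2,P_3$ from $a$ to the $v_i$; since $b$ is adjacent to $v_1,v_2,v_3$ and anticomplete to $V(G'')\supseteq V(H)\cup\{a\}$, appending the edges $v_ib$ yields three internally disjoint, pairwise anticomplete induced paths from $a$ to $b$ with $a\not\sim b$, i.e.\ a theta. In outcome~(iii) we get a triangle $a_1a_2a_3$ in $H$ and paths $P_i$ from $a_i$ to $v_i$, and the same appending produces a pyramid with apex $b$ and base $\{a_1,a_2,a_3\}$ — here one uses again that $w_i$ is the unique $G''$\nobreakdash-neighbor of $v_i$ (hence $w_i$ is the second vertex of $P_i$ and lies on no $P_j$ with $j\ne i$), so no unexpected edges appear between the three branches. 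Either way we contradict that $G$ is (theta, pyramid)-free.

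I expect the genuinely load-bearing idea to be the reduction to $G''$: collapsing everything except the core $Z$ and the three matched vertices $w_i$ forces each $v_i$ to have a unique neighbor there, which is exactly what kills outcome~(i) of \cref{lem: minimalconnected}. The main remaining work is the routine-but-careful verification that the objects produced in outcomes~(ii) and~(iii) really are a theta and a pyramid — that the constructed paths are induced, have the right lengths, and have pairwise anticomplete interiors — of the kind performed repeatedly in the proof of \cref{thm:domsep}; together with the bookkeeping that traces $|N(b)\cap X|\ge p(\omega)$ through the degeneracy coloring (factor $q(\omega)+1$), the matching bound (factor $2$), and \cref{Lemma: induced matching from non induced}, which yields an explicit polynomial $p$.
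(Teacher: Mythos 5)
Your proof is correct and follows essentially the same route as the paper: extract a large matching from $N(b)\cap X$ into $\delta_2(X)$, pass to an induced matching of size $3$ via \cref{Lemma: induced matching from non induced}, restrict to the core $X\setminus(\delta_1\cup\delta_2)$ together with the three matched $\delta_2$-vertices so that each matched vertex of $N(b)\cap X$ has a unique neighbor there, and then apply \cref{lem: minimalconnected} with $b$ closing off a theta or pyramid. The only (immaterial) difference is in how the initial matching is obtained: you first stabilize $N(b)\cap X$ using the degeneracy bound of \cref{corollary theta free degenerate} and a $K_{2,3}$ argument, whereas the paper bounds the common neighborhood of $b$ and a $\delta_2$-vertex by a Ramsey number and matches greedily.
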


\begin{proof}
    Suppose not and let $p(x)= x^3 p(x,3)> R(3,x+1)p(x,3)$ (by \cref{thm:ramsey}) where $p(x,y)$ is defined as in \cref{Lemma: induced matching from non induced}.
    Since no vertex in $\delta_2(X)$ has more than $R(3,\omega(G)+1)$ common neighbour with $b$ (as it would create a theta), and since every vertex in $\delta_1(X)$ has a neighbour in $\delta_2(X)$, there exist a matching from $N(b)\cap X$ to $\delta_2(X)$ of size $p(\omega(G),3)$. By \cref{Lemma: induced matching from non induced}, there exists an induced matching $M$ from $N(b)$ to $\delta_2(X)$ such that $|M|=3$. 
    Write $M=\{x_1y_1, x_2y_2, x_3y_3\}$ where 
     $x_1,x_2,x_3 \in \delta_1(X)$  and $y_1,y_2,y_3 \in \delta_2(X)$.
    Let $G' = X\sm(\delta_1(X)\cup\delta_2(X)) \cup \set{x_1,x_2,x_3,y_1,y_2,y_3}$ and let $H$ be a minimal connected subgraph of $G'$ such that $\set{x_1,x_2,x_3}\subseteq N(H)$. We apply \cref{lem: minimalconnected} to analyze the structure of $H$.
    Since each of $x_1,x_2,x_3$ has a unique neighbor in $H$, and since $\set{y_1,y_2,y_3}$ forms a stable set, the first outcome of \cref{lem: minimalconnected} cannot happen. If the second outcome happens, then $\{b\}\cup\bigcup_{i\leq 3} P_i$ forms a theta, which is a contradiction. Therefore, the last outcome of \cref{lem: minimalconnected} happens, but then $\{b\}\cup\bigcup_{i\leq 3} P_i$ forms a pyramid, which is also a contradiction.   
\end{proof}

We will need two classical results in graph theory:

\begin{theorem}[Menger's Theorem (Vertex Version)\cite{Menger1927}]
\label{thm: menger}
For any two nonadjacent sets of vertices $X$ and $Y$ in a finite graph $G$, 
the size of the smallest set of vertices whose removal separates $X$ from $Y$ 
is equal to the maximum number of pairwise internally vertex-disjoint paths between $X$ and $Y$.
\end{theorem}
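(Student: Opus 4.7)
The plan is to establish the two inequalities of the equality separately. The easy direction, namely that the number of pairwise internally vertex-disjoint $X$-$Y$ paths is at most the size of any $X$-$Y$ separator $S$, follows from a pigeonhole argument: every such path must meet $S$ in an internal vertex (or an endpoint lying in $S$), and the disjointness of the paths forces these hitting vertices to be distinct. Hence a family of $t$ disjoint paths produces $t$ distinct vertices of $S$, giving $t \leq |S|$ and in particular $\max \leq \min$.

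The reverse direction $\max \geq \min$ is the substantive content. I would argue by induction on $|E(G)|$, letting $k$ denote the minimum size of an $X$-$Y$ separator. The base case $k = 0$ is immediate because there is no path from $X$ to $Y$ and the empty family of paths works. For the induction step, pick an edge $e = uv$ with $u, v \notin X \cup Y$; if no such edge exists, every $X$-$Y$ path has length at most two and the statement reduces to K\"onig's theorem applied to the bipartite graph of edges between $N(X) \setminus Y$ and $N(Y) \setminus X$ (together with $X \cap N(Y)$). With $e$ in hand, consider two reductions: the deletion $G - e$ and the contraction $G / e$. In $G - e$, any minimum $X$-$Y$ separator has size at least $k$; if it is still exactly $k$, by induction we obtain $k$ disjoint $X$-$Y$ paths in $G - e$, which are also paths in $G$, and we are done. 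So we may assume that $G - e$ admits a separator $S$ of size $k - 1$; then $S \cup \{u\}$ and $S \cup \{v\}$ are both minimum $X$-$Y$ separators in $G$.

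The main obstacle is the situation where the minimum separator "sits tightly" around $e$, since straightforward induction on either $G - e$ or $G/e$ shrinks $k$ by one and one must then augment a family of $k-1$ disjoint paths by one more path through $e$. The cleanest way to handle this, which I would adopt, is to pick a minimum $X$-$Y$ separator $S$ such that the number of vertices of $G$ on the $X$-side of $S$ is minimized (breaking ties on either side). If $S \ne N(X) \cap V(G \setminus X)$ and $S \ne N(Y) \cap V(G \setminus Y)$, then $S$ properly splits $G$ into two smaller subgraphs $G_X$ (containing $X$ and $S$) and $G_Y$ (containing $Y$ and $S$), each having strictly fewer edges than $G$. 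In both $G_X$ and $G_Y$ the minimum $X$-$S$ and $S$-$Y$ separator sizes equal $k$, so the induction hypothesis produces $k$ disjoint $X$-$S$ paths in $G_X$ and $k$ disjoint $S$-$Y$ paths in $G_Y$, which concatenate at $S$ into $k$ disjoint $X$-$Y$ paths in $G$. The remaining degenerate case, where every minimum separator is $N(X)$ or $N(Y)$, reduces again to bipartite matching via K\"onig's theorem and completes the proof.
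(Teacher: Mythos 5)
The paper does not prove Menger's theorem at all; it is invoked as a classical black-box result with a citation to Menger's 1927 paper, so there is no internal proof to compare your argument against. Assessing your sketch on its own merits, it has the flavor of a correct inductive proof but does not yet hang together, because it interleaves two different standard strategies without completing either.

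Concretely: you begin the edge-deletion reduction, observe that a minimum separator $S$ of size $k-1$ in $G - e$ yields minimum separators $S \cup \{u\}$ and $S \cup \{v\}$ of $G$, and then abandon that setup entirely, never using $u,v,e$ or these separators again. You instead restart with an unrelated choice of a minimum separator minimizing the number of vertices on the $X$-side, but that extremal choice is also never exploited in the remainder of the argument. The $G_X$/$G_Y$ split is the right idea, but the nondegeneracy condition you impose --- $S \neq N(X)$ and $S \neq N(Y)$ --- is not what ensures both $G_X$ and $G_Y$ have strictly fewer edges; what one actually needs is a vertex strictly between $X$ and $S$ and one strictly between $S$ and $Y$, and one must also verify that the induction hypothesis applies with $S$ playing the role of one of the two terminal sets even though $S$ may be adjacent to, or intersect, $X$ or $Y$ (your theorem statement restricts to nonadjacent sets, so this needs an explicit remark). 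Finally, both places where you claim the degenerate case ``reduces to K\H{o}nig's theorem'' are unsubstantiated. In the first degenerate case every $X$-$Y$ path has length exactly two, and the count is simply $|N(X)\cap N(Y)|$ on both sides --- no K\H{o}nig needed --- while your bipartite graph on $N(X)\setminus Y$ versus $N(Y)\setminus X$ does not encode the problem. In the second degenerate case (every minimum separator equals $N(X)$ or $N(Y)$) the reduction to a bipartite matching is asserted but not shown, and it is not immediate, since paths may be long even when the separator hugs $N(X)$. As written the sketch would need these steps filled in before it constitutes a proof.
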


\begin{theorem}[Kőnig's Theorem \cite{Konig}] 
\label{thm konig}
In any bipartite graph $G = (U, V, E)$, the size of a maximum matching 
is equal to the size of a minimum vertex cover.
\end{theorem}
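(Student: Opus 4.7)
The plan is to prove Kőnig's theorem via the classical alternating-paths argument. The easy direction is $\max |M| \leq \min |K|$: given any matching $M$ and any vertex cover $K$, every edge of $M$ must have at least one endpoint in $K$, and since the edges of $M$ are pairwise vertex-disjoint, these endpoints are distinct, so $|M| \leq |K|$. Taking a maximum over $M$ and a minimum over $K$ yields one inequality for free.

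For the reverse inequality I will construct, from a maximum matching $M$, a vertex cover of size exactly $|M|$. Let $U_0 \subseteq U$ be the set of $M$-unmatched vertices of $U$, and let $Z \subseteq U \cup V$ be the set of all vertices reachable from $U_0$ by $M$-alternating paths (paths whose edges alternate between $E \setminus M$ and $M$, starting with an edge of $E \setminus M$). Define
\[
K \;=\; (U \setminus Z) \,\cup\, (V \cap Z).
\]

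To see that $K$ is a vertex cover, suppose for contradiction some edge $uv \in E$ with $u \in U$ and $v \in V$ has $u \in U \cap Z$ and $v \in V \setminus Z$. If $uv \notin M$, then the alternating path witnessing $u \in Z$ can be extended by $uv$ to show $v \in Z$, a contradiction. If $uv \in M$, then $u \notin U_0$, so the alternating path witnessing $u \in Z$ must end with an incoming matching edge at $u$; but $u$ has a unique matching edge, namely $uv$, which forces $v \in Z$, again a contradiction. Hence every edge of $G$ has an endpoint in $K$.

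Finally, I will argue that $|K| = |M|$ by showing that the map sending each vertex of $K$ to its $M$-partner is a bijection onto $M$ (viewed as a set of edges). Every $u \in U \setminus Z$ is $M$-matched because $u \notin U_0$; every $v \in V \cap Z$ is $M$-matched, for otherwise the alternating path from $U_0$ to $v$ would be an $M$-augmenting path, contradicting maximality of $M$. Moreover, the matching partner of $u \in U \setminus Z$ necessarily lies in $V \setminus Z$ (else the alternating path reaching that partner could be extended by the matching edge to $u$, placing $u$ in $Z$), and the matching partner of $v \in V \cap Z$ necessarily lies in $U \cap Z$ (the penultimate vertex of any alternating path reaching $v$ via a non-matching edge lies in $U \cap Z$, and by a similar extension argument one checks it must be the $M$-partner of $v$). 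Thus each edge of $M$ contributes exactly one vertex to $K$, so $|K| = |M|$. Combined with the easy direction this gives $\max |M| = \min |K|$, completing the proof. The only delicate point is the final bookkeeping step identifying $M$-partners with vertices of $K$; handling the alternating-path case analysis carefully is the main thing to get right.
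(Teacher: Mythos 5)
The paper does not give its own proof of Kőnig's theorem; it states it as a classical result and cites the literature, using it as a black box in the proof of \cref{lemma: improved cooperating pair}. So there is no internal proof to compare against. Your proof is the standard alternating-path argument and is essentially correct: the easy direction ($\max|M|\le\min|K|$) is clean, the verification that $K=(U\setminus Z)\cup(V\cap Z)$ is a vertex cover is correct, and the counting argument identifying $K$ with a subset of $M$-edges gives $|K|\le|M|$, which together with the easy direction forces equality.

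One small imprecision in the final bookkeeping: you write that ``the penultimate vertex of any alternating path reaching $v$ via a non-matching edge lies in $U\cap Z$, and by a similar extension argument one checks it must be the $M$-partner of $v$.'' The penultimate vertex reaches $v$ by a \emph{non}-matching edge, so it cannot be the $M$-partner of $v$. What you actually need (and what the extension argument proves) is that the $M$-partner $u$ of $v$ lies in $U\cap Z$: take an alternating path $P$ from $U_0$ to $v$ (its last edge is non-matching since it enters $V$); if $u$ already lies on $P$ then $u\in Z$ by truncation, and otherwise $P$ extended by the matching edge $vu$ is an alternating path to $u$, again giving $u\in Z$. With that sentence corrected, the injection from $K$ into $M$ is fully justified (no $M$-edge has both endpoints in $K$, and every vertex of $K$ is matched), and the proof is complete.
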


The next lemma explains how to make one improvement step in the construction of a sequence of cooperative pairs. 
\begin{lemma} \label{lemma: improved cooperating pair} There exists a polynomial $p$ for which the following holds.
Let $G$ be a (theta, pyramid)-free graph, and let $a,b\in V(G)$. Let $\Pi$ be a partition of $V(G)$ with the properties given by Theorem~\ref{degenerate partition}.
Let $(X,C)$ be a cooperative pair separating $a$ and $b$ in $G$ such that $val_\Pi(X,C)> 1$. 
Then, there exists either
\begin{itemize}
    \item $G'\subseteq G$ and a cooperative pair $(X',C')$ in $G'$ separating $a$ and $b$ in $G$ such that $|C'|\leq |C|+p(\omega(G))$ and $val_\Pi^{G'}(X',C')< val^G_\Pi(X,C)$,  or
    
    \item  $C'\subseteq V(G)$ such that $C'$ separates $a$ from $b$ in $G$ and $|C'|\leq |C|+ \ p(\omega(G))$.
\end{itemize}
\end{lemma}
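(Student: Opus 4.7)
\emph{Setup.} Let $G_0$ be the component of $G\sm C$ containing $X$, let $D$ be the component of $G_0\sm N[X]$ containing $b$, and let $N=N_G(X)\cap N_G(D)$ be the attachments of $D$. My plan is to construct $C'\supseteq C$ by four successive enlargements, each adding a number of vertices bounded by a polynomial in $\omega(G)$, after which either $C'$ directly separates $a$ from $b$ in $G$ (the second outcome), or a properly extended $X'\supseteq X$ yields a cooperative pair $(X',C')$ of strictly smaller value (the first outcome).

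\emph{First enlargement.} I will first bound $|N\cap N(b)|$ by a polynomial in $\omega(G)$, adapting Lemma~\ref{lemma small neighborhood in cooperative} to the vertex $b$, which lies at distance two from $X$. A large $N\cap N(b)$ would produce, via Ramsey and the induced-matching extraction of Lemma~\ref{Lemma: induced matching from non induced}, three pairwise-anticomplete induced paths from $b$ to $\delta_2(X)$; combined with a path inside the connected core $X\sm(\delta_1(X)\cup\delta_2(X))$ guaranteed by cooperativity, Lemma~\ref{lem: minimalconnected} forces a theta or a pyramid, contradicting (theta, pyramid)-freeness. I then add $N\cap N(b)$ to $C$.

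\emph{Second and third enlargements.} I apply Theorem~\ref{corollary: matroid gives partition of neighborhood} to $(X,b)$ in $G_0$, obtaining a partition $(A_1,A_2)$ of $N(X)$ with both matroid ranks at most $q(\omega(G))$. By König's theorem (Theorem~\ref{thm konig}) applied to the bipartite graph of edges between $A_1$ and $\delta(X)$, I extract a vertex cover $Z_1=Z_1^{\mathrm{in}}\sqcup Z_1^{\mathrm{out}}$ of size at most $q(\omega(G))$, with $Z_1^{\mathrm{in}}\subseteq\delta(X)$ and $Z_1^{\mathrm{out}}\subseteq A_1$. By Menger's theorem (Theorem~\ref{thm: menger}) applied to paths from $A_2$ to $b$ internally disjoint from $N[X]$, I extract a cut $Z_2$ of size at most $q(\omega(G))$, and add $Z_1^{\mathrm{out}}\cup Z_2$ to $C$. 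Writing $M=val^G_\Pi(X,C)$ and $M^*=\max_{v\in Z_1^{\mathrm{in}}} val_\Pi(v)$, Theorem~\ref{degenerate partition} applied to the small set $Z_1^{\mathrm{in}}$ gives that the set $H$ of vertices of value greater than $M^*$ with a neighbor in $Z_1^{\mathrm{in}}$ has size at most $|Z_1^{\mathrm{in}}|\cdot p(\omega(G))$. I add $H\sm X$ to $C$ to obtain $C'$; overall $|C'|\le |C|+p(\omega(G))$ for some polynomial $p$ absorbing all constants.

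\emph{Outcome and main obstacle.} Let $D'$ be the component of $b$ in $G\sm(N[X]\cup C')$. The matroid/König/Menger construction forces every $v\in N_G(D')\cap N(X)$ to lie in $A_1\sm Z_1^{\mathrm{out}}$ with all its $X$-neighbors in $Z_1^{\mathrm{in}}$, so $(N_G(D')\cap N(X))\sm C'\subseteq (N(Z_1^{\mathrm{in}})\cap N(X))\sm H$. If this last set is empty, then $C'$ separates $a$ from $b$ in $G$ and the second outcome holds. Otherwise, I set $X'=X\cup(N_G(D')\cap N(X))$ and $G'=(X\cup N_G[D'])\sm C'$; Lemma~\ref{Lemma N(cooperative) is cooperative} gives that $X'$ is cooperative in $G'$, while $(X',C')$ separates $a$ from $b$ in $G$ because $a\in X'\sm\delta^{G'}(X')$ and $b\notin X'\cup C'$. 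The new boundary $\delta^{G'}(X')$ equals $(N_G(D')\cap N(X))\sm C'$, each vertex of which has value at most $M^*$. The main obstacle is enforcing $M^*<M$: I expect to handle this by refining the König step so as to exclude all level-$M$ vertices of $\delta(X)$ from $Z_1^{\mathrm{in}}$, trading each such vertex for its $A_1$-neighbors at only a polynomial increase in $|Z_1^{\mathrm{out}}|$, using the small matching rank of $A_1$ and a direct structural argument.
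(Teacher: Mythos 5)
Your overall plan mirrors the paper's proof: four successive additions to $C$ (common-neighbor control, Menger, K\"onig, and the degenerate-partition step producing $H$), followed by either declaring $C'$ a separator or extending to $X'$ via \cref{Lemma cooperative deletion} and \cref{Lemma N(cooperative) is cooperative}. But the ``main obstacle'' you flag at the end is not actually an obstacle, and the fix you propose (refining the K\"onig step to exclude level-$M$ vertices from $Z_1^{\mathrm{in}}$) is both unnecessary and likely to fail. The strict drop in value is automatic from the fact that the classes $S_1,\dots,S_k$ of the partition $\Pi$ produced by \cref{degenerate partition} are \emph{stable sets}. Concretely: every $v\in\delta^{G'}(X')$ lies in $A\setminus F_2$, so all of $v$'s neighbors in the old boundary lie in $F_1=Z_1^{\mathrm{in}}$; since $v\notin H$, each such neighbor $u$ satisfies $val_\Pi(u)\ge val_\Pi(v)$; and since $u$ and $v$ are adjacent, they lie in distinct stable classes, so $val_\Pi(u)\neq val_\Pi(v)$, giving $val_\Pi(v)<val_\Pi(u)\le val^G_\Pi(X,C)$. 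No bound on $M^*$ relative to $M$ is needed. Chasing $M^*<M$ would send you down a dead end, because there is no reason the K\"onig cover can avoid level-$M$ vertices of $\delta(X)$ without blowing up its size.

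A second, smaller issue is in your setup. You take $D$ to be the component of $G_0\setminus N[X]$ containing $b$ and declare $b$ to be at ``distance two'' from $X$. But the definition of a cooperative pair separating $a$ and $b$ only forces $b$ to be anticomplete to $X\setminus\delta^{G'}(X)$; $b$ may well be adjacent to boundary vertices of $X$, in which case $b\in N[X]$ and your $D$ is undefined, and the matroid machinery cannot be applied to $(X,b)$ because it requires $b$ anticomplete to $X$. The paper's first enlargement is $C_1=C\cup(N(b)\cap X)$ precisely to sever $b$ from $X$ and then pass to the shrunken cooperative set $X_1\subseteq X\setminus C_1$ (via \cref{Lemma cooperative deletion}), after which $b$ is genuinely anticomplete to $X_1$. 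Your first enlargement adds $N\cap N(b)\subseteq N(X)$ instead, which lives outside $X$ and does nothing to remove $b$'s possible attachments to the boundary of $X$; you should replace it with the bound on $|N(b)\cap X|$ from \cref{lemma small neighborhood in cooperative} (which applies trivially when $b\notin N(X)$).
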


\begin{proof}

    Let $p_1$, $p_2$ and $p_3$ be the polynomials given by \cref{lemma small neighborhood in cooperative}, \cref{corollary: matroid gives partition of neighborhood} and \cref{degenerate partition}, respectively,  and let $p(x)=p_1(x)+2p_2(x)+p_2(x)p_3(x)$.
    Let $C_1= C \cup (N(b)\cap X)$. If $C_1$ separates $a$ from $b$ in $G$ we are done, so we may assume that there is a connected component $J$ of $G\sm(X\cup C_1)$ such that
    $J$  is not anticomplete to $X\sm C_1$ and such that $b \in J$. Let $X_1 = (X\sm \delta_1^{G\sm C}(X)) \cup N_{{G\sm C}}(J) \cap (X\sm C_1)$ and let $G_1 = N[J]\sm C_1 \cup X_1$. It follows from \cref{Lemma cooperative deletion} that $(X_1,C_1)$ is a cooperative pair in $G_1$.

    Let $\Mca_{X,b}^1$ and $\Mca_{X,b}^2$ on $G_1\sm C_1$ be defined as before.
    By \cref{corollary: matroid gives partition of neighborhood}, there exists a partition $(A,B)$ of $N_{G_1}(X_1)$ for which \begin{itemize}
        \item the maximum matching from $A$ to $\delta(X_1)$ is of size at most $p_2(\omega(G))$ and
        \item there are a most $p_2(\omega(G))$ vertex-disjoint paths (except at $b$) from $B$ to $b$ that are internally disjoint from $N[X]$.
    \end{itemize}
    By \cref{thm: menger}, there exists a set $K\subseteq G_1\sm(X_1\cup C_1)$ such that $|K|\leq p_2(\omega(G))$ and every path from $B$ to $b$ in $G_1\sm (X_1 \cup C_1 \cup K)$ intersects $A$. Let $C_2 = C_1 \cup K$. By \cref{thm konig}, there exists a set $F\subseteq N_{G_1}(X_1)\cup \delta^{G_1}(X_1)$ such that $|F|\leq q_2(\omega(G))$ and $A\sm F$ is anticomplete to $\delta^{G_1}(X_1)\sm F$.
    Let $F_1 = F\cap \delta^{G_1}(X_1)$ and $F_2 = F\cap A$ . Let $C_3 = C_2 \cup F_2$. Let \[H = \bigcup_{u\in F_1} \set{ v| v\in N(u) \text{ such that  } val_\Pi(u) < val_\Pi(v) }.\]
    Let $C' = C_3 \cup (H \cap N(X))$.
    By the third property  of the partition $\Pi$, we have that $$|C'|\leq |C_3| + |F_1| p_3(\omega(G)) \leq |C|+p_1(\omega(G))+ 2p_2(\omega(G)) + p_2(\omega(G))  p_3(\omega(G)) = |C|+p(\omega(G)).$$
    If $C'$ separates $a$ from $b$ in $G$, we are done. Therefore, we may assume that the connected component $D$ in $G\sm (C'\cup X_1)$ containing $b$ is not anticomplete to  $\delta^{G_1}(X_1)\sm C'$.
    Let $$X'' = X_1\sm\delta^{G_1}(X_1) \cup (N_{G\sm (C'\cup X_1)}(D) \cap X_1\sm C')$$ and $G'' = X'' \cup N_{G\sm (C'\cup X_1)}[D] \sm C'$.
    It follows from \cref{Lemma cooperative deletion} that $(X'',C')$ is a cooperative pair in $G''$.

    Let $\Gamma$ be the connected component of $G''\sm N_{G''}[X'']$ containing $b$ and let $X' = X''\cup N_{G''}(\Gamma)$.
    By \cref{Lemma N(cooperative) is cooperative}, we have that $(X',C')$ is a cooperative pair in $G' = X'' \cup N_{G''}[\Gamma]$.
    Moreover, we have that $\delta_1^{G'}(X') \cup C'$ separates $a$ from $b$ in $G$ since $N_{G\sm C'}(\Gamma) =\delta_1^{G'}(X')$.
    To conclude that $(X',C')$ and the subgraph $G'$ satisfy the first bullet in the statement of the theorem, it remains to show that $val^{G'}(X',C')< val^G(X,C)$.
    This follows since $\delta^{G'}(X')\subseteq N_G(X)\sm H$ and so every vertex in $\delta^{G'}(X')$ has a lower value than $val^G(X,C)$.
    \end{proof}

We can now prove the main result of this section.
\begin{theorem}\label{thm: small pairwise separator omega version}
    There exists a polynomial $p$ for which the following holds.
    Let $G$ be an $n$-vertex (theta, pyramid)-free graph, and $a,b$ be non-adjacent vertices in $G$. Then, there exist and $(a,b)$-separator $C\subseteq V(G)$ such that $|C| \leq p(\omega(G)) \log n$.
\end{theorem}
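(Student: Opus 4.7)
The plan is to use the partition from \cref{degenerate partition} as a decreasing potential and iterate \cref{lemma: improved cooperating pair} until a separator of $a$ from $b$ is produced.

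First I would apply \cref{degenerate partition} to $G$ to obtain a vertex partition $\Pi = (S_1,\dots,S_k)$ with $k \leq q(\omega(G))\log n$ for some polynomial $q$. Next, I would exhibit an initial cooperative pair $(X_0, C_0)$ separating $a$ from $b$ in $G$ with $|C_0|$ bounded polynomially in $\omega(G)$: a natural attempt is to take $X_0$ to be a suitably chosen small-radius closed neighborhood of $a$, enlarged if necessary so that the three defining conditions of cooperativity hold, together with $C_0 = \emptyset$. In the degenerate cases where $a$ and $b$ are too close for such a set to separate them, \cref{lemma small neighborhood in cooperative} already yields an $(a,b)$-separator of size polynomial in $\omega(G)$. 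By construction, $val^G_\Pi(X_0, C_0) \leq k$.

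The main iteration repeatedly invokes \cref{lemma: improved cooperating pair}. Given the current pair $(X_i, C_i)$ with $val_\Pi(X_i, C_i) > 1$, the lemma returns either a separator $C'$ of $a$ from $b$ in $G$ with $|C'| \leq |C_i| + p_1(\omega(G))$, in which case we are done, or a new cooperative pair $(X_{i+1}, C_{i+1})$ in some subgraph $G_{i+1}$ with $|C_{i+1}| \leq |C_i| + p_1(\omega(G))$ and strictly smaller value. Since the value is a non-negative integer bounded above by $k$, the process terminates within $k$ steps, and each step contributes at most $p_1(\omega(G))$ vertices to $C$. Summing the increments gives
\[
|C| \leq |C_0| + k \cdot p_1(\omega(G)) \leq p(\omega(G))\log n
\]
for an appropriate polynomial $p$, as desired.

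The main obstacle is twofold. First, one must carefully set up the initial pair $(X_0, C_0)$ so that all three cooperativity conditions hold; this is subtle because the second condition requires every vertex of $\delta_2$ to have a neighbor strictly inside $X \setminus (\delta_1 \cup \delta_2)$, which may force an iterative enlargement of $X_0$ before beginning the main loop. Second, one must handle the base case in which the iterated value drops to $1$ without the improvement lemma producing a separator: in that situation the boundary lies entirely in the independent set $S_1$ whose members have bounded degree in the remaining graph, and one finishes either by adapting the proof of \cref{lemma: improved cooperating pair} to this regime or by a direct absorption argument that adds at most $p_1(\omega(G))$ further vertices to $C$.
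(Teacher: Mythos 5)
Your outline matches the paper's proof in structure: both initialize a cooperative pair anchored at $a$, iterate \cref{lemma: improved cooperating pair}, use the strict decrease of $\mathrm{val}_\Pi$ (bounded above by the number of parts $k \leq p_1(\omega)\log n$ from \cref{degenerate partition}) to bound the number of iterations, and sum the per-step contributions of $p_2(\omega)$ vertices to $C$. The difference is in the initialization, and yours is not quite right as stated. Taking $C_0 = \emptyset$ and $X_0$ a radius-two ball around $a$ fails when $\mathrm{dist}(a,b) = 2$, since then $b \in X_0$ and the cooperative pair cannot satisfy $b \in G \setminus (X_0 \cup C_0)$; simply ``enlarging'' $X_0$ does not fix this. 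The paper instead puts $C_0 = N(a) \cap N(b)$, bounds $|C_0| \leq R(3,\omega+1) \leq \omega^6$ by observing that a stable triple in $N(a)\cap N(b)$ together with $a,b$ forms a theta, and then takes $X_0 = N^2_{G_0}(a)$ inside $G_0 = G \setminus C_0$; after deleting the common neighborhood, $a$ and $b$ are at distance at least $3$ in $G_0$ and the cooperative pair conditions can be arranged. Your proposed fallback via \cref{lemma small neighborhood in cooperative} is not the right tool here: that lemma bounds $|N(b) \cap X|$ for an already-cooperative $X$, whereas what is needed (and what the paper proves) is a bound on $|N(a)\cap N(b)|$ itself via the theta constraint. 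Once the initialization is fixed, the rest of your argument coincides with the paper's.
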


\begin{proof}
    Let $p_1$ and $p_2$ be defined as in \cref{degenerate partition} and \cref{lemma: improved cooperating pair}, respectively. Let $p(x) = x^6 +(p_1(x)+1)p_2(x)$ .
    Let $C_0 = N(a)\cap N(b)$, $G_0 = G\sm C_0$, and $X_0=N^2_{G_0}(a)$. Then $(X_0,C_0)$ is a cooperative pair separating $a$ from $b$ in $G$.
    Now let us recursively define a sequence $(X_i,C_i)_{i=0}^k$ where $(X_i,C_i)$ are the sets obtained by applying \cref{lemma: improved cooperating pair} to $(X_{i-1},C_{i-1})$ and $G$ as long as the first outcome of that lemma happens. Since the value of the cooperative pairs in this sequence is strictly decreasing, \cref{degenerate partition} implies that  $k\leq p_1(\omega(G)) \log(n)$. Applying \cref{lemma: improved cooperating pair} one more time gives us $C$ such that $C$ separates $a$ from $b$ in $G$ and $|C| \leq |C_0| + (k+1)\ p_2(\omega(G))$.

\sta{$|C_0| \leq R(3,\omega(G)+1)$. \label{claim: common neighbors}}
    Suppose not, then $N(a)\cap N(b)$ contains an independent set of size $3$, which together with $a$ and $b$ forms a theta.
    This proves \eqref{claim: common neighbors}.

    Therefore, using \cref{thm:ramsey}, we have that
    $|C| \leq \omega(G)^6 + (k+1)\ p_2(\omega(G)) \leq p(\omega(G))$ as required.
\end{proof}

 From \cref{thm: small pairwise separator omega version}, we deduce \cref{thm: small pairwise separator}, which we now restate.
\pairwisesep*
\begin{proof}
    Let $G\in \class$ and let $p$ be the polynomial from \cref{thm: small pairwise separator omega version}. Then, by \cref{thm: small pairwise separator omega version}, $G$ is $p(\omega(G))\log n$-pairwise separable. Since $p(\omega(G))\leq p(t) \leq t^c$ for a large enough $c$, the theorem follows.
\end{proof}

\section{Proofs of the main results}\label{sec: main results}

We have already proved  \cref{thm: small pairwise separator}; we prove
\cref{thm: small alpha pairwise separator} next. We make use of the following result from \cite{CESL}:
\begin{theorem}[Theorem 1.4 from \cite{CESL}]
\label{thm: from pairwise to alpha pairwise}
    For every positive integer $c$ there exists an integer $d=d(c)$ with the following property. If $\mathcal C$ is a hereditary graph class that is $(\omega(G) \log |V(G)|)^c$-pairwise separable, then for every $G \in \mathcal C$ on at least $3$ vertices and for every two non-adjacent vertices $u,v \in V(G)$, there exists a set $X \subseteq V(G)$ disjoint from $\{u,v\}$, with $\alpha(X) \leq \log^d (|V(G)|)$, that separates $u$ from $v$.
\end{theorem}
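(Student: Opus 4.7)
The plan is to prove the theorem by strong induction on $n=|V(G)|$, choosing $d=d(c)$ large enough (aiming for something like $d=2c+2$). The base case $n\leq n_0$ for a suitable absolute constant $n_0$ is trivial. For the inductive step, I would combine the pairwise-separability hypothesis with a ``clique-eating'' scheme whose purpose is to trade the $\omega(G)^c$ factor in the size bound for a $\log n$ factor in the independence bound. Throughout the construction I maintain a partial separator $X$ and keep track of the connected component of $G\setminus X$ that contains $v$; the invariant is that $\alpha(X)$ grows by at most a constant with each iteration, and the number of iterations is $O(\log n)$ plus a single ``base'' application.

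Apply the hypothesis to $G$ to obtain an initial $(u,v)$-separator $S$ with $|S|\leq(\omega(G)\log n)^c$. If $\omega(G)\leq\log n$, then $|S|\leq\log^{2c}n$, so $\alpha(S)\leq|S|\leq\log^d n$ and we are done by taking $X:=S$. The substantive case is $\omega(G)>\log n$. Here I would exploit a large clique $K\subseteq V(G)$: since $K$ is connected, it is contained, together with part of $S$, in only one side of the $(u,v)$-separation given by $S$; say $K\subseteq A\cup S$ where $A$ is the component of $G\setminus S$ containing $u$. Now consider the subgraph $G':=G\setminus (K\setminus Y)$ where $Y$ is a small set of ``apex'' vertices of $K$ needed to preserve connectivity between $u$ and the remaining separator. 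The graph $G'$ lies in $\mathcal{C}$ (hereditary), has strictly fewer vertices, and $u,v$ are still non-adjacent in $G'$. Applying the inductive hypothesis to $(G',u,v)$ yields a separator $X'$ with $\alpha(X')\leq\log^d|V(G')|\leq\log^d n$. Setting $X:=X'\cup Y$ gives a separator of $u,v$ in $G$ (here one checks carefully that adding back $K\setminus Y$ does not create a new $u$-to-$v$ path, using that $K$ was on the $u$-side), and $\alpha(X)\leq\alpha(X')+O(1)$ since $Y$ is small.

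Iterating this ``clique-eating'' procedure, each step either terminates us in the small-$\omega$ regime or strictly reduces the number of vertices by at least $|K|-|Y|\geq\log n-O(1)$. Since we lose at least $\Omega(\log n)$ vertices per iteration and start with $n$ vertices, the number of iterations is $O(n/\log n)$, which is too many; to get down to $O(\log n)$ iterations one should instead ensure each iteration halves $n$, which is done by picking $K$ to be (or be contained in) a balanced separator guaranteed by iterating the pairwise-separability hypothesis applied to the central component. Each iteration then contributes $O(1)$ to $\alpha(X)$, for a total of $O(\log n)$, plus the final $\log^{2c}n$ contribution from the small-$\omega$ base case; this yields $\alpha(X)\leq\log^{2c+1}n$, so $d=2c+2$ suffices.

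The main obstacle is the clique-extraction step: one must show that a sufficiently large clique $K$ can always be ``absorbed'' into the partial separator using only $O(1)$ new independent vertices, while guaranteeing that the residual graph $G'$ genuinely has smaller problem size (either in $n$ or in some potential that drives the induction to terminate in $O(\log n)$ rounds). The hereditary assumption is essential here, since we repeatedly apply pairwise-separability to induced subgraphs; and the choice of $Y$ (the ``apex'' of the clique we eat) must be driven by balanced-separator arguments, reminiscent of the ones in \cite{QPTAS}, to control the recursion depth. Once these ingredients are in place, the final bound $\alpha(X)\leq\log^d n$ follows by summing the per-iteration contributions.
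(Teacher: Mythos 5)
This statement is quoted in the paper as Theorem~1.4 of \cite{CESL}; the paper gives no proof of it, so there is nothing internal to compare your argument against. Judged on its own, your proposal is a sketch with two genuine gaps, both of which you flag as ``obstacles'' but neither of which you resolve.

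First, the separator-correctness step fails as written. You delete $K\setminus Y$, find a separator $X'$ of $u,v$ in $G'=G\setminus(K\setminus Y)$, and claim $X'\cup Y$ separates $u$ from $v$ in $G$ ``using that $K$ was on the $u$-side.'' But $K$ lies on the $u$-side of the separation induced by $S$, and $S$ is discarded; the components of $G\setminus(X'\cup Y)$ bear no relation to the components of $G\setminus S$. A vertex of $K\setminus Y$ may be adjacent both to the component of $G\setminus(X'\cup Y)$ containing $u$ and to the one containing $v$, creating a $u$--$v$ path that $X'\cup Y$ does not cut. The natural repair --- put all of $K$ into the separator, which costs only $1$ in independence number --- runs directly into your second problem.

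Second, the termination argument is missing. You correctly observe that removing $|K|\geq \log n$ vertices per round gives $O(n/\log n)$ rounds, which is useless, and you propose instead to choose $K$ inside a balanced separator so that each round halves $n$. But pairwise separability only yields separators between specified non-adjacent pairs, not balanced separators; a balanced separator need not contain (or be contained in) a clique; and a balanced separator of the current component need not separate $u$ from $v$, so it is unclear what quantity is being halved or why the partial separators compose. Deriving balanced separators from pairwise separability is itself a substantial result (cf.\ \cref{bettersep} in this paper, which additionally requires breakability), so invoking it ``reminiscent of \cite{QPTAS}'' does not close the gap. Until both of these points are addressed, the argument establishes only the easy regime $\omega(G)\leq\log|V(G)|$, where the cardinality bound already implies the independence bound.
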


\begin{proof}[Proof of \cref{thm: small alpha pairwise separator}]
    It follows from \cref{thm: from pairwise to alpha pairwise} and \cref{thm: small pairwise separator omega version}.
\end{proof}

Next, we prove \cref{thm: log tw}. The structure of the proof is similar to
\cite{tw15}. We use the following:

\begin{lemma}[Lemma 8 of \cite{zbMATH06707229}]
\label{lemma:sep_tw}
Let $G$ be a graph. If for every weight function $w$, there exists a $w-$balanced separator $X$ such that $|X|\leq d$, then $\tw(G) \leq 2 d$.
\end{lemma}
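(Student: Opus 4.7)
The plan is the standard recursive construction of a tree decomposition from balanced separators.

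The first step is to observe that the hypothesis passes to induced subgraphs: given any $A\subseteq V(G)$ and any weight function $w$ supported on $A$, apply the hypothesis to $w$ (viewed as a weight function on $V(G)$) to obtain a $w$-balanced separator $X\subseteq V(G)$ with $|X|\leq d$; then $X\cap A$ is a $w$-balanced separator of size at most $d$ in $G[A]$, since every component of $G[A]\setminus (X\cap A)$ is contained in some component of $G\setminus X$ and $w$ is supported on $A$. In particular, applying this with the uniform weight on any $A$ yields a set $X\subseteq A$ with $|X|\leq d$ such that each component of $G[A]\setminus X$ has at most $|A|/2$ vertices.

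The decomposition is then built by recursion. I would maintain the invariant: given a subset $A\subseteq V(G)$ and a ``boundary'' $B\subseteq A$ with $|B|\leq d$, produce a tree decomposition of $G[A]$ whose root bag contains $B$ and all of whose bags have size at most $2d+1$. At each recursive step, use the first step with the uniform weight function on $A\setminus B$ to find a balanced separator $X\subseteq A\setminus B$ with $|X|\leq d$. Form the root bag $B\cup X$, of size at most $2d$. For each component $C$ of $G[A]\setminus(B\cup X)$, recurse on $G[C\cup N_{G[A]}(C)]$ with new boundary $N_{G[A]}(C)\cap(B\cup X)$. Termination is immediate from the halving property $|C|\leq |A\setminus B|/2$, so the recursion has depth $O(\log|V(G)|)$. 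Verification of the tree-decomposition axioms (coverage of vertices and edges, and contiguity of the occurrence set of each vertex) is then routine by induction on the recursion: boundary vertices appear in both the parent bag and the root bag of each child subtree, which ensures contiguity.

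The main obstacle in this plan is the bookkeeping needed to pin the bag size at $2d+1$ rather than a larger constant multiple of $d$. A priori, the new boundary handed down to each child is a subset of $B\cup X$ and can have size up to $2d$; adding another separator of size $d$ at the next level would naively inflate bags to size $3d+1$. Achieving the sharp $2d+1$ bound requires a more careful charging argument that keeps track of which vertices of $B\cup X$ are genuinely ``live'' in each subproblem---only neighbours of the current component need to be carried forward, and one argues that the effective boundary in each recursive call can be reset to have size at most $d$ (for instance, by absorbing the non-neighbouring portion of $B\cup X$ into the parent bag and not re-listing it in the child). This is the essential technical point of the argument, and is where I expect the careful constants of the bound to be obtained.
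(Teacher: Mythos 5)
This lemma is cited in the paper (as Lemma 8 of \cite{zbMATH06707229}) rather than proved, so there is no in-paper argument to compare against; what follows is an assessment of your attempt on its own merits.

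You set up the right kind of recursion and, to your credit, you diagnose the central difficulty yourself: the boundary handed to a child can have size up to $|B\cup X|\le 2d$, which breaks the invariant $|B|\le d$. The problem is that the ``fix'' you sketch does not repair this. Discarding the non-neighbouring part of $B\cup X$ before recursing is harmless but useless in the worst case, because \emph{all} of $B\cup X$ may be adjacent to a given component $C$, so $N_{G[A]}(C)\cap(B\cup X)$ can still have size $2d$. With the invariant broken, the recursion as written lets the boundary grow by up to $d$ at every level, and you never obtain \emph{any} uniform bound on the bag sizes, let alone $2d+1$. So the gap you flag is not merely a bookkeeping issue about the constant; it is a genuine hole in the argument.

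A second, closely related problem is your choice of weight function. Using the uniform weight on $A\setminus B$ controls $|C|$ but gives no control over how much of $B$ survives into a child subproblem. The standard device is to place the weight on the \emph{boundary} $B$ itself: the resulting balanced separator $X$ of size at most $d$ guarantees that every component $C$ of $G[A]\setminus X$ satisfies $|C\cap B|\le |B|/2$, and since $N(C)\subseteq X$ the new boundary has size at most $|C\cap B|+|X|$, which does not grow. This is the idea you would need to make the recursion terminate. But even this only yields bags of the form $B\cup X$, hence size up to $|B|+d$, and maintaining $|B|\le 2d$ in this way gives bags of size up to $3d$, i.e.\ $\tw(G)\le 3d-1$ rather than $2d$. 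Pinning the bound at $2d$ requires an additional idea (a finer analysis of how $B$, $X$, and the components interleave) that your write-up does not supply. In short: the framework is the right one, and your instinct about where the difficulty lies is correct, but the proof as proposed does not establish the stated inequality, and the indicated repair does not close the gap.
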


\begin{theorem}[Corollary of Theorem 9.2 from \cite{tw15}]
\label{bettersep}
    Let $d,L \in \nat$.
  Let $G$ be $ L-$pairwise separable and let $w$ be a weight function on $G$.  If $G$ is $d$-breakable then there exists a $w$-balanced separator $X$ in $G$ such that $|X|\leq 3Ld$.
\end{theorem}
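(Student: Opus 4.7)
\medskip

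\noindent\textbf{Proof proposal for \cref{bettersep}.}

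The plan is to take the ``core'' guaranteed by $d$-breakability, which describes a balanced separator $N[Y]$ with $|Y|<d$ but possibly a huge separator, and replace the costly piece $N[y]$ attached to each $y\in Y$ by a $3L$-sized substitute coming from the pairwise separability of $G$. Concretely, apply $d$-breakability to the weight function $w$ to obtain $Y\subseteq V(G)$ with $|Y|<d$ and such that $N[Y]$ is a $w$-balanced separator; let $X_0=N[Y]$. I would like to produce, for each $y\in Y$, a set $S_y\subseteq V(G)$ with $|S_y|\le 3L$ so that $X:=\bigcup_{y\in Y}S_y$ is still a $w$-balanced separator.

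The idea for building $S_y$ is as follows. First insert $y$ itself into $S_y$ (this costs at most $1$ vertex per core element, absorbed in the $3L$ slack). Now look at the components $D$ of $G\setminus(X_0)$ with $w(D)>0$; by the balanced-separator property, each such $D$ has weight at most $\tfrac{1}{2}$. For each $y\in Y$, I expect at most two components of $G\setminus(\{y\}\cup\text{partial separator})$ to carry the bulk of the weight, and for each of them I can pick a ``witness'' vertex $v$ (e.g.\ a heaviest vertex of that component that is non-adjacent to $y$) and use $L$-pairwise separability to find an $(y,v)$-separator $T_{y,v}$ of size at most $L$. Define $S_y$ to be the union of $\{y\}$ together with the (at most) two sets $T_{y,v}$ obtained this way; then $|S_y|\le 2L+1\le 3L$. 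Hence $|X|\le 3L\lvert Y\rvert\le 3Ld$, giving the claimed cardinality bound.

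The verification that $X$ is a $w$-balanced separator is the main obstacle. Here I would argue by contradiction: suppose a component $D^*$ of $G\setminus X$ has $w(D^*)>\tfrac{1}{2}$. Since $N[Y]$ was balanced, $D^*$ must intersect $N[Y]\setminus X$, i.e.\ the ``dropped'' neighbors of some $y\in Y$. Trace a path inside $D^*$ through such a dropped neighbor and argue that, since the $T_{y,v}$ separators are placed to isolate $y$ from the heaviest witnesses on each side, any two vertices in distinct components of $G\setminus N[Y]$ that lie in $D^*$ would force an $(y,v)$-path avoiding $T_{y,v}$ for some witness $v$, contradicting that $T_{y,v}$ separates $y$ from $v$. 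The delicate point is choosing the witnesses $v$ so that after removing all the $S_y$'s simultaneously no component can re-accumulate more than half the weight; I expect this requires either an inductive/iterative choice of the witnesses (processing the elements of $Y$ one at a time on the graph with the previously chosen $S_y$'s removed, and redefining ``big components'' at each step) or a careful global argument using the fact that $N[Y]$ itself was balanced.

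Finally, invoking \cref{lemma:sep_tw} then bounds $\tw(G)\le 6Ld$, which is how the theorem slots into the proof of \cref{thm: log tw}: taking $L=\omega(G)^c\log|V(G)|$ from \cref{thm: small pairwise separator} and $d=d(t)$ from \cref{thm:domsep} yields $\tw(G)\le \omega(G)^{c'}\log|V(G)|$ for an appropriate $c'=c'(t)$, as required.
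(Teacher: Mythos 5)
The paper does not prove \cref{bettersep}: it is imported wholesale as a corollary of Theorem~9.2 of \cite{tw15} and used as a black box in the proof of \cref{thm: log tw}. So there is no in-paper proof to compare against.

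Taking your proposal on its own merits, there is a genuine gap, and you flag it yourself: the verification that $X=\bigcup_{y\in Y}S_y$ is a $w$-balanced separator is exactly the content of the theorem, and you leave it as ``the delicate point'' that would need either an iterative choice of witnesses or ``a careful global argument.'' Without that, what you have is the easy half (the size count $|X|\le 3Ld$) plus a plan, not a proof. Moreover, the plan as described has a conceptual problem. The sets $T_{y,v}$ you obtain from $L$-pairwise separability separate $y$ from a specific witness $v$; they give you no control over connectivity \emph{within} $N(y)\setminus\{y\}$. Two different components $D,D'$ of $G\setminus N[Y]$ can both attach to $N(y)\setminus\{y\}$ and become glued through $N(y)$ even after you delete $y$ and the sets $T_{y,v}$, because the witnesses $v$ need not sit in $D$ or $D'$, and a $y$--$v$ separator is under no obligation to hit a $D$--$D'$ path running through $N(y)\setminus\{y\}$. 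Also, the assertion that ``at most two components carry the bulk of the weight'' is not generally true and is not used in a way that obviously bounds the number of witnesses per $y$ by two; after removing $Y$ there can be many moderately heavy components, and even the standard 3-bin repacking of the components of $G\setminus N[Y]$ (each of weight at most $1/2$, total at most $1$) into three groups of weight at most $1/2$, which is likely the device behind the constant $3$ in $3Ld$, does not immediately reduce the witness count per $y$ to two. As written, the argument cannot be checked and the key separation property is not established.
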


We are now ready to prove \cref{thm: log tw}.
\begin{proof}
    Every graph $G$ is $K_{\omega(G)+1}-$free.
    Let $c'$ be such that $p(x+1)\leq x^{c'}$ where $p$ is defined as in \cref{thm: small pairwise separator omega version}, and let $d=d(t)$ be defined as in \cref{thm:domsep}.
    By \cref{thm: small pairwise separator}, $G$ is $\omega(G)^{c'}\log n$-pairwise separable. By \cref{thm:domsep}, $G$ is $d$-breakable. Therefore, by \cref{bettersep}, 
    for every weight function $\mu$ 
    there exist a $mu$-balanced separator $X$ of $G$ such that $|X|\leq 3d\omega(G)^{c'}\log n$. By \cref{lemma:sep_tw} this  implies that $\tw(G)\leq 6d\omega(G)^{c'}\log n$. Taking $c$ large completes the proof (in fact, $c=c'+\log(6d)$ is enough).
\end{proof}

Finally, we prove  \cref{thm: small tree alpha}. We  use the following  result from
\cite{CESL}:
\begin{theorem}[Theorem 1.1 from \cite{CESL}]\label{thm: from polylog w to polylog}
    Let $\mathcal{C}$ be a hereditary graph class. The following are equivalent:
    \begin{enumerate}\setlength\itemsep{-.7pt}
        \item[(i)] There exists an integer $c_1 > 0$ such that for every $G \in \mathcal{C}$ on at least $3$ vertices we have $\atw(G) \leq (\log |V(G)|)^{c_1}$
        \item[(ii)] There exists  an integer $c_3 > 0$ such that for every $G \in \mathcal{C}$ on at least $3$ vertices we have $\tw(G) \leq (\omega(G) \log |V(G)|)^{c_3}$
    \end{enumerate}
\end{theorem}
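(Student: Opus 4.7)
I would prove the two directions of the equivalence separately, using Theorem 1.4 from \cite{CESL} (recalled here as \cref{thm: from pairwise to alpha pairwise}) as the central bridge. Throughout, I will freely use the standard fact that $\tw(G)\le k$ is equivalent, up to a constant factor, to the existence of $(w,\tfrac12)$-balanced separators of size at most $k+1$ for every weight function $w$, and similarly $\atw(G)\le k$ is equivalent up to constants to the existence of balanced separators of independence number at most $k$. Going from balanced separators with some quality to a tree decomposition of the same quality is the well-known recursive construction (decompose $G$, put the separator in the root bag, recurse on the components of $G$ minus the separator, and glue).

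For the direction (ii)\,$\Rightarrow$\,(i), I would first observe that any graph with $\tw(G)\le k$ is $(k+1)$-pairwise separable: for any non-adjacent $u,v$, any bag on the path in $T$ between a bag containing $u$ and one containing $v$ separates them once $\{u,v\}$ is removed. Under hypothesis (ii), this shows $\mathcal C$ is $(\omega\log n)^{c_3}$-pairwise separable. Applying \cref{thm: from pairwise to alpha pairwise} with $c=c_3$, for every $G\in\mathcal C$ on at least three vertices and every non-adjacent pair $u,v$, there is a $(u,v)$-separator $X$ with $\alpha(X)\le\log^d n$. I then convert this pairwise $\alpha$-separability into balanced $\alpha$-separability by the standard weight-shifting argument: given a weight function $w$, iteratively pick two heavy vertices on opposite sides of a candidate separation, separate them by a small-$\alpha$ set, and take the union; after $O(\log n)$ iterations this produces a single balanced separator of independence number $\log^{d'}n$. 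The recursive tree decomposition construction then yields $\atw(G)\le\log^{c_1}n$ for $c_1=O(d')$.

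For the direction (i)\,$\Rightarrow$\,(ii), I would proceed by induction on $|V(G)|$. The hereditary hypothesis means (i) holds for every induced subgraph, so each bag $\chi(t)$ of an $\atw$-witnessing decomposition of $G$ itself satisfies $\atw(G[\chi(t)])\le\log^{c_1}|\chi(t)|$, hence admits balanced $\alpha$-separators. The plan is to refine each bag recursively: within $\chi(t)$, find a balanced separator $S_t$ with small $\alpha$, decompose $G[\chi(t)]$ into pieces of size at most $|\chi(t)|/2$, and stitch the resulting sub-decompositions into the original tree $T$ while promoting the adhesion sets $\chi(t)\cap\chi(t')$ into a single refined bag. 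After controlling adhesion sets, one applies induction to each piece (smaller than $G$) to bound its $\tw$ by $(\omega\log n)^{c_3}$, and combining the Ramsey bound on bag sizes, $|B|\le R(\log^{c_1}n+1,\omega+1)$, at the level where the recursion bottoms out gives the desired overall bound. The main obstacle here is that the direct Ramsey estimate on bag sizes is too weak to yield a bound polynomial in $\omega\log n$ for all regimes of $\omega$; this is precisely why the hereditary hypothesis (applied to every induced subgraph, not just to $G$ itself) must be leveraged throughout the recursion, and why the induction on $n$ must carefully track how much each refinement step can blow up the width before terminating.
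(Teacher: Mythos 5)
The paper does not prove this statement at all: it is imported verbatim as Theorem~1.1 of \cite{CESL} and used as a black box, so there is no internal proof to compare against. Judged on its own merits, your argument has genuine gaps in both directions.

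For (ii)$\Rightarrow$(i), the very first step fails: bounded treewidth does \emph{not} imply bounded pairwise separability. In $K_{2,m}$ the two non-adjacent vertices $u,v$ of degree $m$ can only be separated by deleting all $m$ of their common neighbours, yet the treewidth is $2$; the flaw in your argument is that a single bag may contain both $u$ and $v$ (as every bag does in the star-shaped decomposition of $K_{2,m}$ with bags $\{u,v,w_i\}$), and such a bag minus $\{u,v\}$ does not separate them. This is exactly why the paper's own pairwise-separation result (\cref{thm: small pairwise separator omega version}) must first bound $N(a)\cap N(b)$ using theta-freeness; in a general hereditary class satisfying (ii) no such bound is available. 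The second step is also unsound: there is no ``standard weight-shifting argument'' turning pairwise $\alpha$-separability into balanced separators of small independence number. Pairwise separability is much weaker than breakability: the $t\times t$ grid is $4$-pairwise separable (remove $N(u)$), yet it has no balanced separator of polylogarithmic independence number and its tree-independence number grows polynomially in $t$. Indeed, in this paper the passage from pairwise separators to balanced separators needs the extra hypothesis of $d$-breakability (\cref{thm:domsep} fed into \cref{bettersep}); \cref{thm: from pairwise to alpha pairwise} alone gives only pairwise $\alpha$-separators and cannot yield $\atw(G)\leq\log^{c_1}|V(G)|$.

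For (i)$\Rightarrow$(ii), what you write is a plan rather than a proof, and you flag the obstruction yourself: the Ramsey bound on a bag of independence number $\log^{c_1}n$ is of order $(\log n)^{c_1\omega}$, which is not polynomial in $\omega\log |V(G)|$ (already for $\omega\approx\log n/\log\log n$ it is $n^{\Omega(1)}$). Saying that ``the hereditary hypothesis must be leveraged throughout the recursion'' names the difficulty without resolving it; you give no mechanism controlling the width blow-up of the proposed bag refinements, no handling of adhesion sets beyond a gesture, and no termination analysis. That missing mechanism is precisely the content of the cited theorem, so this direction remains unproved in your write-up.
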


\begin{proof}[Proof of \cref{thm: small tree alpha}]
    It follows from \cref{thm: log tw} and \cref{thm: from polylog w to polylog}.
\end{proof}

\section{Acknowledgment}

We are grateful to Pawe\l{} Rz\k{a}\.zewski for allowing us to use ~\cref{fig:ex esd}.
We also thank Sepehr Hajebi and Sophie Spirkl for many helpful discussions.

\bibliographystyle{abbrv}
\bibliography{ref}


\end{document}